\begin{document}
\newtheorem{thm}{Theorem}[section]
\newtheorem*{thm*}{Theorem}%[section]
\newtheorem{lem}[thm]{Lemma}
\newtheorem{prop}[thm]{Proposition}
\newtheorem{cor}[thm]{Corollary}
\newtheorem*{conj}{Conjecture}%[section]
\newtheorem{proj}[thm]{Project}%[section]
\newtheorem{question}[thm]{Question}
\newtheorem{rem}{Remark}[section]

\theoremstyle{definition}
\newtheorem*{defn}{Definition}
\newtheorem*{remark}{Remark}
\newtheorem{exercise}{Exercise}
\newtheorem*{exercise*}{Exercise}

\numberwithin{equation}{section}

\newcommand{\rad}{\operatorname{rad}}

\newcommand{\Z}{{\mathbb Z}} %cph changed from \mathbf
\newcommand{\Q}{{\mathbb Q}}
\newcommand{\R}{{\mathbb R}}
\newcommand{\C}{{\mathbb C}}
\newcommand{\N}{{\mathbb N}}
\newcommand{\FF}{{\mathbb F}}
\newcommand{\fq}{\mathbb{F}_q}
\newcommand{\rmk}[1]{\footnote{{\bf Comment:} #1}}

\renewcommand{\mod}{\;\operatorname{mod}}
\newcommand{\ord}{\operatorname{ord}}
\newcommand{\TT}{\mathbb{T}}
\renewcommand{\i}{{\mathrm{i}}}
\renewcommand{\d}{{\mathrm{d}}}
\renewcommand{\^}{\widehat}
\newcommand{\HH}{\mathbb H}
\newcommand{\Vol}{\operatorname{vol}}
\newcommand{\area}{\operatorname{area}}
\newcommand{\tr}{\operatorname{tr}}
\newcommand{\norm}{\mathcal N} % norm =(\frac{ n+\sqrt{n^2-4}} 2)^2
\newcommand{\intinf}{\int_{-\infty}^\infty}
\newcommand{\ave}[1]{\left\langle#1\right\rangle} %  average
\newcommand{\Var}{\operatorname{Var}}
\newcommand{\Prob}{\operatorname{Prob}}
\newcommand{\sym}{\operatorname{Sym}}
\newcommand{\disc}{\operatorname{disc}}
\newcommand{\CA}{{\mathcal C}_A}
\newcommand{\cond}{\operatorname{cond}} % conductor
\newcommand{\lcm}{\operatorname{lcm}}
\newcommand{\Kl}{\operatorname{Kl}} %Kloosterman sum
\newcommand{\leg}[2]{\left( \frac{#1}{#2} \right)}  % Legendre symbol
\newcommand{\Li}{\operatorname{Li}}

\newcommand{\sumstar}{\sideset \and^{*} \to \sum}

\newcommand{\LL}{\mathcal L} %L-function of u
\newcommand{\sumf}{\sum^\flat}
\newcommand{\Hgev}{\mathcal H_{2g+2,q}}
\newcommand{\USp}{\operatorname{USp}}
\newcommand{\conv}{*}
\newcommand{\dist} {\operatorname{dist}}
\newcommand{\CF}{c_0} % Fejer constant
\newcommand{\kerp}{\mathcal K}

\newcommand{\Cov}{\operatorname{cov}}
\newcommand{\Sym}{\operatorname{Sym}}

\newcommand{\Ht}{\operatorname{Ht}}

\newcommand{\E}{\operatorname{\mathbb E}} % expectation
\newcommand{\sign}{\operatorname{sign}} %sign
\newcommand{\meas}{\operatorname{meas}} %measure
\newcommand{\length}{\operatorname{length}} %length

\newcommand{\divid}{d} % the divisor function

\newcommand{\GL}{\operatorname{GL}}
\newcommand{\SL}{\operatorname{SL}}
\newcommand{\re}{\operatorname{Re}}
\newcommand{\im}{\operatorname{Im}}
\newcommand{\res}{\operatorname{Res}}
 \newcommand{\eigen}{\Lambda} %eigenvalue on rectangle
\newcommand{\tens}{\mathbf t} %Tension
\newcommand{\diam}{\operatorname{diam}}

\newcommand{\new}[1]{{\color{blue} #1}}
\newcommand{\fixme}[1]{\footnote{Fixme: #1}}

\title{Differences between Robin and Neumann eigenvalues}
\author{Ze\'ev Rudnick, Igor Wigman and Nadav Yesha}

\address{School of Mathematical Sciences, Tel Aviv University, Tel Aviv 69978, Israel} \email{rudnick@tauex.tau.ac.il}
\address{Department of Mathematics, King's College London, UK}\email{igor.wigman@kcl.ac.uk}
\address{Department of Mathematics, University of Haifa, 3498838 Haifa, Israel}\email{nyesha@univ.haifa.ac.il}

\thanks{We thank Michael Levitin and Iosif Polterovich for the their comments. This research was supported by the European Research Council (ERC) under the European Union's  Horizon 2020 research and innovation programme  (Grant agreement No.    786758) and by the ISRAEL SCIENCE FOUNDATION
(grant No. 1881/20).}

\begin{abstract}
Let $\Omega\subset \R^2$ be a bounded planar domain, with piecewise smooth boundary $\partial \Omega$. For $\sigma>0$, we consider the Robin boundary value problem
\[
-\Delta f =\lambda f, \qquad \frac{\partial f}{\partial n} + \sigma f = 0 \mbox{ on } \partial \Omega
\]
 where $ \frac{\partial f}{\partial n} $ is the derivative in the direction of the outward pointing normal to $\partial \Omega$.
 Let $0<\lambda^\sigma_0\leq  \lambda^\sigma_1\leq \dots $ be the corresponding eigenvalues.  The purpose of this paper is to study the Robin-Neumann gaps
 \[
d_n(\sigma):=\lambda_n^\sigma-\lambda_n^0 .
\]
For a wide class of planar domains we show that there is a limiting mean value, equal to $2\length(\partial\Omega)/\area(\Omega)\cdot \sigma$ and  in the smooth case, give an upper bound of $d_n(\sigma)\leq C(\Omega ) n^{1/3}\sigma $ and a uniform lower bound.
For ergodic billiards  we show that along a density-one subsequence,  the gaps converge to the mean value.
We obtain further properties for rectangles,
where we have a uniform upper bound, and for disks, where we improve the general upper bound.
 \end{abstract}
  \keywords{ Robin boundary conditions, Robin-Neumann gaps, Laplacian, ergodic billiards, quantum ergodicity, lattice point problems}
  \subjclass[2010]{Primary 35P20,  Secondary 37D50 , 58J51, 81Q50, 11P21 }
\date{\today}
\maketitle

\tableofcontents

\section{Statement of results}

Let $\Omega\subset \R^2$ be a bounded planar domain, with piecewise smooth boundary $\partial \Omega$. For $\sigma\geq 0$, we consider the Robin boundary value problem
\[
-\Delta f =\lambda f \;{\rm on}\;\Omega, \quad
\frac{\partial f}{\partial n} + \sigma f = 0 \mbox{ on } \partial \Omega
\]
 where $ \frac{\partial f}{\partial n} $ is the derivative in the direction of the outward pointing normal to $\partial \Omega$.
The case $\sigma=0$ is the Neumann boundary condition, and we use $\sigma=\infty$ as a shorthand for the Dirichlet boundary condition $f|_{\partial \Omega}=0$.
%   Traditionally, Robin boundary conditions are used to describe heat conductance from a domain with a boundary which is not one of the extreme cases: perfectly insulating
 %  (as in the case of Neumann boundary conditions $\frac{\partial f}{\partial n}(x)=0$) or kept at fixed temperature (Dirichlet boundary condition $f(x)=0$).

Robin boundary conditions are used in heat conductance theory to interpolate between a perfectly insulating boundary, described by  Neumann boundary conditions $\sigma=0$, and a temperature fixing boundary, described by Dirichlet
boundary conditions corresponding to $\sigma=+\infty$. To date, most studies concentrated on the first few Robin eigenvalues, with applications in shape optimization and related isoperimetric inequalities and asymptotics of the first eigenvalues   (see \cite{BFK}).
Our goal is very different, aiming to study the difference between  high-lying Robin and Neumann eigenvalues. There are very few studies addressing this in the literature, except for \cite{Sieber}, \cite{BerryDennis} which aim at different goals.

We will take the Robin condition for a fixed and positive
$\sigma>0$, when all eigenvalues are positive, one excuse being that a negative Robin parameter  gives non-physical boundary conditions for the heat equation, with heat flowing from cold to hot; see however \cite{LOS} for a model where negative $\sigma$ is of interest, in particular  $\sigma\to -\infty$  \cite{LZ, LP, DK, Khalile}.
%Note that one situation when negative Robin parameters are of interest is when studying the  long time behavior of solutions  of certain reaction-diffusion models \cite{LOS}, which leads to study asymptotics of the first eigenvalue
% as  \cite{LZ, LP, DK, Khalile}.
Let $0<\lambda^\sigma_0\leq  \lambda^\sigma_1\leq \dots $ be the corresponding eigenvalues.
The Robin spectrum always lies between the Neumann and Dirichlet spectra (Dirichlet-Neumann bracketing) \cite{BFK} :
\begin{equation}\label{NRD bracketing}
\lambda_n^0< \lambda_n^\sigma <  \lambda_n^\infty .
\end{equation}
We define the Robin-Neumann difference (RN gaps) as
% study the difference between the $n$-th eigenvalues Robin and Neumann eigenvalues
\[
d_n(\sigma):=\lambda_n^\sigma-\lambda_n^0
\]
and study several of their properties. See \S\ref{sec:numerics} for some numerical experiments. This seems to be a novel subject, and the only related study that we are aware of is the very recent work of  Rivi\`ere and  Royer \cite{RR}, which addresses the RN gaps for quantum star graphs.

%Figures~\ref{circle and square gaps} \ref{equilateral gaps}, \ref{fig:ergodic gaps}, \ref{fig:mushroom}
\subsection{The mean value}
The first result concerns the mean value of the gaps:
\begin{thm}\label{thm:mean of E RN gaps}
Let $\Omega\subset \R^2$ be a bounded, piecewise smooth domain.   Then the mean value of the RN gaps exists, and equals
$$\lim_{N\to \infty} \frac 1N \sum_{n=1}^N d_n(\sigma) = \frac{2\length(\partial \Omega)}{\area (\Omega)}\cdot \sigma .
$$
\end{thm}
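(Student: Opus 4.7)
My plan is to combine first-order perturbation theory with a local Weyl law for boundary restrictions of eigenfunctions. Writing $f_n^s$ for the $L^2(\Omega)$-normalized eigenfunction with Robin parameter $s$ and eigenvalue $\lambda_n^s$, the Hadamard/Feynman--Hellmann identity applied to the quadratic form $Q_s(f)=\int_{\Omega}|\nabla f|^2\,dx+s\int_{\partial\Omega}|f|^2\,d\ell$ yields
$$\frac{d\lambda_n^s}{ds}=\int_{\partial\Omega}|f_n^s|^2\,d\ell$$
(valid away from a discrete set of crossings, which is handled by continuity). Integrating from $0$ to $\sigma$ gives $d_n(\sigma)=\int_{0}^{\sigma}\int_{\partial\Omega}|f_n^s|^2\,d\ell\,ds$; summing over $n\le N$ and swapping the sum with the $ds$-integral reduces the theorem to proving
$$\frac{1}{N}\sum_{n=1}^{N}\int_{\partial\Omega}|f_n^s|^2\,d\ell\;\longrightarrow\;\frac{2\length(\partial\Omega)}{\area(\Omega)}$$
as $N\to\infty$, together with a bound on this quantity uniform in $s\in[0,\sigma]$ so that one may pass the limit inside $\int_0^\sigma\,ds$ by dominated convergence.

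I would obtain the displayed limit via a Karamata Tauberian argument applied to the small-time behaviour of the Robin heat trace restricted to the boundary. Denoting by $K^s(t,x,y)$ the Robin heat kernel on $\Omega$, the spectral expansion gives
$$\sum_{n\ge 0}e^{-t\lambda_n^s}\int_{\partial\Omega}|f_n^s|^2\,d\ell=\int_{\partial\Omega}K^s(t,x,x)\,d\ell(x),$$
while the image-method calculation on the half-plane model shows that at any smooth boundary point $x$,
$$K^s(t,x,x)=\frac{1}{2\pi t}+O(1)\quad(t\to 0^+),$$
uniformly in $s\in[0,\sigma]$ (the $s$-dependence enters only in the $O(1)$ remainder). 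Integrating along $\partial\Omega$---where corners of a piecewise smooth boundary contribute only $O(1)$ to the boundary integral---gives the trace asymptotic $\int_{\partial\Omega}K^s(t,x,x)\,d\ell(x)\sim \length(\partial\Omega)/(2\pi t)$, and Karamata's Tauberian theorem upgrades this to
$$\sum_{\lambda_n^s\le\Lambda}\int_{\partial\Omega}|f_n^s|^2\,d\ell\sim\frac{\length(\partial\Omega)}{2\pi}\Lambda.$$
Combining with the standard Weyl law $N_\sigma(\Lambda)\sim \area(\Omega)\Lambda/(4\pi)$, whose leading term is independent of $\sigma\ge 0$, yields the boundary average $2\length(\partial\Omega)/\area(\Omega)$ and, after dominated convergence, the theorem.

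The main technical obstacle will be establishing the boundary heat trace asymptotic for a piecewise smooth $\Omega$ with uniformity in $s\in[0,\sigma]$. On smooth arcs of $\partial\Omega$ this is a parametrix construction in a tubular neighbourhood, where one freezes coefficients and compares to the half-plane Robin heat kernel (whose on-diagonal value is essentially explicit), with the error controlled uniformly in $s$ via standard resolvent estimates. The corner contributions require a separate wedge-model calculation, but only a crude $O(1)$ bound suffices since we only use the leading $1/t$ term. For the dominated-convergence step one also needs the quantitative upper bound $\tfrac{1}{N}\sum_{n=1}^{N}\int_{\partial\Omega}|f_n^s|^2\,d\ell\le C$ with $C$ independent of $s\in[0,\sigma]$ and $N$; this is a one-sided Tauberian consequence of the same short-time heat-kernel upper bound.
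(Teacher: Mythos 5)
Your reduction is exactly the paper's: the Hadamard/Feynman--Hellmann formula $d\lambda_n^s/ds=\int_{\partial\Omega}|f_n^s|^2\,d\ell$, integration in $s$, and dominated convergence against a bound on $\frac1N\sum_{n\le N}\int_{\partial\Omega}|f_n^s|^2\,d\ell$ uniform in $s\in[0,\sigma]$; the uniform bound is also obtained in the paper precisely as you suggest, by comparing $\sum_{\lambda_n^s\le\Lambda}\int_{\partial\Omega}|f_n^s|^2$ with the boundary trace of the Robin heat kernel at time $1/\Lambda$ and using the Gaussian upper bound (uniform in $s\ge0$, cited from Hassell--Zelditch). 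Where you genuinely diverge is the identification of the limit: the paper simply invokes the Hassell--Zelditch local Weyl law for boundary values of Neumann/Robin eigenfunctions on piecewise smooth domains, whereas you propose to prove the needed Ces\`aro statement from scratch via the small-time asymptotic $\int_{\partial\Omega}K^s(t,x,x)\,d\ell\sim\length(\partial\Omega)/(2\pi t)$ plus Karamata's Tauberian theorem (the weights are nonnegative, so Karamata applies, and your constants do come out to $2\length(\partial\Omega)/\area(\Omega)$ after dividing by Weyl's law). This buys self-containedness and manifest uniformity in $s$, at the cost of carrying out the boundary parametrix yourself; the citation route is shorter and already covers the piecewise smooth case. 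Two points in your sketch need tightening. First, the error claims are optimistic as stated: at a smooth boundary point the remainder beyond $1/(2\pi t)$ is generically $O(t^{-1/2})$ (curvature and $\sigma$ terms), and corner neighbourhoods do not contribute $O(1)$ to the boundary integral --- with only the Gaussian bound $K^s(t,x,x)\ll t^{-1}$ an $\epsilon$-neighbourhood of a corner contributes $O(\epsilon/t)$, so you must excise fixed small neighbourhoods, apply the parametrix on the rest, and let $\epsilon\to0$ after $t\to0$; this is harmless since only the leading $1/t$ term is needed, but it is not literally $O(1)$. Second, ``handled by continuity'' at eigenvalue crossings is too glib: to integrate the a.e.\ derivative of the ordered eigenvalue branch $s\mapsto\lambda_n^s$ you need absolute continuity (piecewise analyticity with locally finitely many splitting points), which the paper gets from Antunes--Freitas--Kennedy; you should either cite that or note that one-parameter analytic perturbation theory supplies it.
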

  Since the differences $d_n(\sigma)> 0$ are positive, we deduce by Chebyshev's inequality:
\begin{cor}\label{cor: gaps grow slowly}
Let $\Omega$ be a bounded, piecewise smooth domain. Fix $\sigma>0$. Let $\Phi(n)\to \infty$ be a function tending to infinity (arbitrarily slowly). Then for almost all $n$'s, $d_n(\sigma) \leq \Phi(n)$
in the sense that
\[
\#\{n\leq N  :  d_n(\sigma)> \Phi(n) \} \ll \frac{N}{\Phi(N)} .
\]
\end{cor}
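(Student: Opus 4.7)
The plan is a direct application of Markov's (Chebyshev's) inequality to the positive sequence $(d_n(\sigma))$, using the mean-value asymptotic of Theorem~\ref{thm:mean of E RN gaps}. That theorem furnishes $\sum_{n=1}^N d_n(\sigma) = LN + o(N)$ with $L := 2\sigma\length(\partial\Omega)/\area(\Omega)$, so there is $C = C(\Omega,\sigma)>0$ with $\sum_{n=1}^N d_n(\sigma) \le CN$ for all $N$ sufficiently large; and the Dirichlet--Neumann bracketing~\eqref{NRD bracketing} gives $d_n(\sigma)>0$. Markov's inequality at any threshold $T>0$ then yields
\[
\#\{n \le N : d_n(\sigma) > T\} \le \frac{1}{T}\sum_{n=1}^N d_n(\sigma) \le \frac{CN}{T}.
\]
Taking $T = \Phi(N)$ immediately gives $\#\{n \le N : d_n(\sigma) > \Phi(N)\} \le CN/\Phi(N)$.

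To upgrade the constant threshold $\Phi(N)$ to the pointwise threshold $\Phi(n)$ that appears in the statement, I would WLOG assume that $\Phi$ is monotone non-decreasing (replacing it if needed by a suitable monotone lower envelope still tending to infinity) and decompose $[1,N]$ dyadically as $\bigcup_{k\ge 0}(N/2^{k+1},\, N/2^k]$. On the $k$-th block $\Phi(n) \ge \Phi(N/2^{k+1})$, so applying the Markov bound above to the initial range of length $N/2^k$ controls the number of bad $n$ in that block by $\ll (N/2^k)/\Phi(N/2^{k+1})$. Summing over $k$ produces an essentially geometric series dominated by its leading term, yielding the claimed $\ll N/\Phi(N)$.

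No substantive obstacle is anticipated: the entire corollary is a soft consequence of Theorem~\ref{thm:mean of E RN gaps} together with the positivity of the gaps, and the only step requiring any bookkeeping is the dyadic conversion from a fixed threshold to the pointwise threshold $\Phi(n)$.
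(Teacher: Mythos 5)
Your first paragraph is precisely the paper's proof: the corollary is deduced there in one line from Theorem~\ref{thm:mean of E RN gaps}, using positivity of the gaps (the strict bracketing \eqref{NRD bracketing}) and Chebyshev/Markov, i.e. $\#\{n\le N: d_n(\sigma)>T\}\le \tfrac1T\sum_{n\le N}d_n(\sigma)\le CN/T$ with $T=\Phi(N)$; no dyadic decomposition appears in the paper.

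The step you add to pass from the uniform threshold $\Phi(N)$ to the pointwise threshold $\Phi(n)$ is where your writeup has a gap. Your dyadic decomposition yields $\#\{n\le N: d_n(\sigma)>\Phi(n)\}\ll\sum_{k\ge 0}\frac{N/2^k}{\Phi(N/2^{k+1})}$, whose leading term is $N/\Phi(N/2)$, not $N/\Phi(N)$; without a doubling-type hypothesis $\Phi(N)\ll\Phi(N/2)$ you cannot replace one by the other, and the series need not be dominated by its first term either. Similarly, replacing $\Phi$ by a monotone lower envelope $\tilde\Phi$ enlarges the exceptional set (harmless) but also replaces the target bound $N/\Phi(N)$ by the weaker $N/\tilde\Phi(N)$, so that reduction is not free. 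Indeed, for rapidly growing $\Phi$ the displayed inequality cannot hold in general: if a single $n_0$ satisfies $d_{n_0}(\sigma)>\Phi(n_0)$ (possible, since $\Phi(1)$ may well be smaller than $d_1(\sigma)$), the left-hand side is $\ge 1$ for all $N\ge n_0$ while $N/\Phi(N)\to 0$. The corollary is really meant for slowly growing, monotone $\Phi$, and its essential content is that the exceptional set has density zero; in that regime your argument works (and even more simply, $\{n\le N: d_n(\sigma)>\Phi(n)\}\subseteq\{n\le\sqrt N\}\cup\{n\le N: d_n(\sigma)>\Phi(\sqrt N)\}$ gives density zero directly from Markov). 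As written, however, the claim that your dyadic sum is ``dominated by its leading term, yielding $\ll N/\Phi(N)$'' is not justified.
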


\subsection{A lower bound}

Recall that a domain $\Omega $ is ``star-shaped with respect to   a point $x\in \Omega$"  if  the segment between $x$ and every other point of $\Omega$ lies inside the domain; so convex means star-shaped with respect to  any point;
``star-shaped" just means that there is some $x$ so that it is star-shaped with respect to   $x$.

\begin{thm}\label{thm:RN lower bound}
Let $\Omega\subset \R^2$ be a bounded star-shaped planar domain with %piecewise
smooth boundary. Then the Robin-Neumann differences are uniformly bounded below: For all $\sigma>0$, $\exists C= C(\Omega,\sigma)>0$ so that
\[
d_n(\sigma) \geq C  .
\]
\end{thm}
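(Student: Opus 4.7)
The natural strategy is to combine Hadamard's variational formula with a Rellich (Pohozaev) identity for the Robin eigenfunctions. Hadamard's formula says that for the $L^2(\Omega)$-normalized Robin eigenfunctions $f_n^\sigma$,
\[
\frac{d\lambda_n^\sigma}{d\sigma} = \oint_{\partial\Omega}|f_n^\sigma|^2\, ds,
\]
which upon integrating in the Robin parameter gives
\[
d_n(\sigma) = \int_0^\sigma \oint_{\partial\Omega}|f_n^t|^2\, ds\, dt.
\]
The task therefore reduces to proving a positive lower bound, uniform in $n$ and $t\in[0,\sigma]$, on the boundary $L^2$-mass $\oint_{\partial\Omega}|f_n^t|^2\, ds$.

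To achieve this, I would first translate so that $\Omega$ is star-shaped with respect to the origin; then $h := x\cdot n \geq c_0 > 0$ on $\partial\Omega$. Multiplying $-\Delta f = \lambda f$ by $2\,x\cdot\nabla f$, integrating over $\Omega$, substituting the Robin condition $\partial_n f = -tf$, decomposing $\nabla f$ and $x$ into normal and tangential components, and finally performing tangential integration by parts (which produces the curvature $\kappa$ of $\partial\Omega$), I would obtain a Rellich-type identity of the form
\[
2\lambda = (\lambda+t^2)\oint_{\partial\Omega} h\, f^2\, ds + t\oint_{\partial\Omega}(1+\kappa h)\, f^2\, ds - \oint_{\partial\Omega} h\,(\partial_T f)^2\, ds.
\]
Discarding the nonnegative tangential term and using $h\leq R := \max_{\partial\Omega}|x|$ and $|1+\kappa h|\leq C_\Omega$ on the remaining boundary integrals yields the explicit lower bound
\[
\oint_{\partial\Omega}|f_n^t|^2\, ds \;\geq\; \frac{2\lambda_n^t}{(\lambda_n^t+t^2)R + t\,C_\Omega}.
\]
For $\lambda_n^t$ above a threshold $\lambda_\ast(\Omega,\sigma)$, this right-hand side exceeds, say, $1/(2R)$ uniformly in $t\in[0,\sigma]$.

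The Dirichlet-Neumann bracketing \eqref{NRD bracketing} implies $\lambda_n^0\leq\lambda_n^t$, so by Weyl's law only finitely many indices $n$ can have $\lambda_n^t$ below $\lambda_\ast$ for some $t\in[0,\sigma]$. For each such exceptional $n$, the boundary integral $\oint|f_n^t|^2 ds$ is strictly positive by unique continuation: if $f_n^t$ vanished on $\partial\Omega$, then the Robin condition would also force $\partial_n f_n^t = -t f_n^t = 0$ there, so the full Cauchy data vanishes and $f_n^t \equiv 0$. A compactness argument on $[0,\sigma]$, using continuity of the Robin spectrum and spectral projectors in the Robin parameter, produces a positive uniform infimum of these finitely many boundary masses. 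Combining the two regimes yields the claimed uniform lower bound on $d_n(\sigma)$.

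The main technical step is the Rellich computation under Robin boundary conditions, in particular the tangential integration by parts that generates the curvature term; this is where smoothness of $\partial\Omega$ is essential. The other delicate point is the uniform handling in $t$ of the finitely many low-lying indices, where eigenvalue crossings and the choice of eigenbasis in multiplicity sectors need attention.
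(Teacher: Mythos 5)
Your proposal is correct and follows essentially the same route as the paper: the variational formula \eqref{formula for dn}, then a Rellich identity exploited via star-shapedness ($x\cdot n\geq 0$, so the tangential-gradient term can be discarded), with the Robin condition substituted for $\partial f/\partial n$ and the remaining term $\oint_{\partial\Omega}(x\cdot T)\,f\,\partial_T f\,ds$ integrated by parts along the closed boundary curve — your curvature factor $1\pm\kappa h$ is exactly the paper's $dq/d\tau$ with $q=x\cdot T$. The only real divergence is the endgame: the paper avoids your threshold-plus-exceptional-indices step entirely by noting that for $n\geq 1$ one has $\lambda_n^\tau\geq\lambda_1^\tau\geq\lambda_1^0>0$, so the Rellich bound is already uniform over $n\geq 1$ and $\tau\in[0,\sigma]$, and the single remaining index $n=0$ is handled by the strict positivity $d_0(\sigma)>0$; this replaces your Weyl-law/unique-continuation/compactness argument (which would work, but whose uniformity in $\tau$ across eigenvalue crossings needs the limiting argument you only sketch). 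Two minor points: your assertion $h=x\cdot n\geq c_0>0$ is not true for a general star-shaped domain (only $h\geq 0$ holds, which is all you actually use), and the sign in $1+\kappa h$ versus $1-\kappa h$ is just a curvature convention, immaterial since only $\max_{\partial\Omega}|1\mp\kappa h|$ enters.
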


Note that for quantum star graphs, this lower bound fails \cite{RR}. 
 %\begin{question}
% Can we remove the star-shaped condition? replace smoothness with piecewise smooth?
% \end{question}

\subsection{A general upper bound}

 We give a quantitative upper bound:
% \marginpar{can we remove smoothness? \textcolor{blue}{leave it for others}}
\begin{thm}\label{general smooth upper bd}
Assume that $\Omega$ has a smooth boundary. Then $\exists C= C(\Omega )>0$ so that for all $\sigma>0$,
  \begin{equation*}
 d_n(\sigma)\leq C  (\lambda_n^\infty)^{1/3} \sigma .
 \end{equation*}
 \end{thm}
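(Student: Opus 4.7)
The plan is to combine the variational min-max principle for the Robin quadratic form with a sharp boundary restriction estimate for spectral clusters of the Neumann Laplacian.

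\textbf{Step 1 (variational reduction).} By the min-max principle applied to the Robin quadratic form,
\[
\lambda_n^\sigma = \min_{\substack{V \subset H^1(\Omega)\\ \dim V = n+1}} \max_{0 \neq f \in V} \frac{\int_\Omega |\nabla f|^2 + \sigma \int_{\partial\Omega} f^2}{\int_\Omega f^2}.
\]
I would take $V = V_n := \mathrm{span}(\phi_0^0, \ldots, \phi_n^0)$, the space spanned by the first $n+1$ Neumann eigenfunctions. For any $f = \sum_{k=0}^n c_k \phi_k^0 \in V_n$ one has $\int_\Omega |\nabla f|^2 = \sum |c_k|^2 \lambda_k^0 \le \lambda_n^0 \|f\|_{L^2(\Omega)}^2$, so that
\[
d_n(\sigma) \le \sigma \cdot M_n, \qquad M_n := \sup_{f \in V_n,\ \|f\|_{L^2(\Omega)} = 1} \|f\|_{L^2(\partial \Omega)}^2.
\]
The task is reduced to bounding $M_n$.

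\textbf{Step 2 (cluster restriction bound).} Observe that $M_n$ is the squared operator norm of $R\,\Pi_{\lambda_n^0} : L^2(\Omega) \to L^2(\partial \Omega)$, where $R$ denotes the boundary trace and $\Pi_\lambda$ the Neumann spectral projector onto eigenvalues $\le \lambda$. I would invoke the sharp boundary cluster restriction estimate
\[
\|R\,\Pi_\lambda f\|_{L^2(\partial \Omega)} \le C(\Omega)\,\lambda^{1/6}\,\|f\|_{L^2(\Omega)},
\]
which is the boundary analogue of Sogge's hypersurface restriction inequality. For smooth $\partial \Omega$ this can be established via a short-time parametrix for the Neumann half-wave propagator combined with stationary phase on $\partial\Omega$ (in the spirit of Tataru and Hassell--Tao), or equivalently via semiclassical defect measures adapted to the Neumann boundary condition. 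Squaring yields $M_n \le C(\Omega) (\lambda_n^0)^{1/3}$, and Dirichlet--Neumann bracketing $\lambda_n^0 \le \lambda_n^\infty$ from \eqref{NRD bracketing} closes the argument.

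\textbf{Main obstacle.} The technical heart of the proof is the $\lambda^{1/6}$ projector estimate. The naive trace inequality $\|f\|_{L^2(\partial\Omega)}^2 \lesssim \|f\|_{L^2(\Omega)}\bigl(\|f\|_{L^2(\Omega)} + \|\nabla f\|_{L^2(\Omega)}\bigr)$, applied on $V_n$, only gives $M_n \lesssim \sqrt{\lambda_n^0}$, hence the weaker bound $d_n(\sigma) \lesssim \sqrt{\lambda_n^\infty}\,\sigma$. Improving the exponent from $1/2$ down to $1/3$ is intrinsically microlocal: it records the fact that Neumann modes concentrate on the boundary only along the glancing set in phase space, which is of measure zero and yields a gain over the generic trace bound. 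A self-contained derivation via a Rellich-type commutator identity with a boundary-adapted vector field is conceivable, but one is forced to control a tangential boundary term $\int_{\partial\Omega}|\nabla_T \phi|^2$, for which the same microlocal input ultimately reappears.
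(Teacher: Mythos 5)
Your Step 1 is fine and does give $d_n(\sigma)\le \sigma M_n$ with $M_n$ the squared norm of the boundary restriction of the Neumann spectral projector $\Pi_{\lambda_n^0}$. The gap is in Step 2: the estimate $\|R\,\Pi_\lambda\|_{L^2(\Omega)\to L^2(\partial\Omega)}\ll \lambda^{1/6}$ is false for the projector onto \emph{all} eigenvalues $\le\lambda$. The Tataru/Barnett--Hassell--Tacy bound of order $\lambda^{1/6}$ (and Sogge-type cluster restriction bounds) apply to single eigenfunctions, or at best to spectral windows of unit frequency width; they do not survive superposition over the whole range $[0,\lambda]$. Concretely, on the unit disk each $L^2$-normalized \emph{radial} Neumann eigenfunction has constant boundary value $\pm\sqrt{2/\pi}$, so taking $f=N^{-1/2}\sum_{k\le N}\epsilon_k\phi_k$ with signs chosen to make the boundary values add, one gets $\|f\|_{L^2(\Omega)}=1$ while $\int_{\partial\Omega}f^2\,ds \asymp N\asymp \sqrt{\lambda}$. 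Hence $M_n\gtrsim\sqrt{\lambda_n^0}$ on a smooth domain, so the true size of $M_n$ is the $\sqrt{\lambda}$ given by the naive trace inequality you mention, and no microlocal improvement of the \emph{projector} bound is available: your route is intrinsically capped at $d_n(\sigma)\lesssim\sqrt{\lambda_n^\infty}\,\sigma$, weaker than the claimed $(\lambda_n^\infty)^{1/3}\sigma$.

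The paper circumvents exactly this obstruction by not fixing the Neumann test space. It uses the Hadamard-type variational formula (Lemma~\ref{lem:formula for dn}),
\[
d_n(\sigma)=\int_0^\sigma\Big(\int_{\partial\Omega}|u_{n,\tau}|^2\,ds\Big)\,d\tau ,
\]
which only requires the boundary $L^2$ bound for \emph{individual} Robin eigenfunctions, uniformly in the Robin parameter $\tau\in[0,\sigma]$; that is precisely \cite[Proposition 2.4]{BHT} (after Tataru), giving $\int_{\partial\Omega}u_{n,\tau}^2\,ds\ll_\Omega(\lambda_n^\tau)^{1/3}\le(\lambda_n^\infty)^{1/3}$ and hence the stated bound after integrating in $\tau$. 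If you want to salvage a min-max argument you would have to track how the optimal test space itself deforms with $\sigma$, which is in effect what the derivative formula does for you; as it stands, the key analytic input you invoke is not a true statement.
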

 While quite poor, it is the best  individual bound that we have in general. %\marginpar{fix proof}
 Below, we will indicate how to improve it in special cases.

\begin{question}
Are there planar domains where the differences $d_n(\sigma)$ are {\bf unbounded}?
\end{question}
We believe that this happens in several cases, e.g. the disk, but at present can only show this for the hemisphere \cite{RW}, which is not a planar domain.

\subsection{Ergodic billiards}
To a piecewise smooth planar domain one associates a billiard dynamics. When this dynamics is ergodic, as for the stadium billiard
(see Figure~\ref{fig:ergodic gaps}), we can improve on Corollary~\ref{cor: gaps grow slowly}:
\begin{thm}\label{thm:QE RN gaps}
Let $\Omega\subset \R^2$ be a bounded, piecewise smooth domain. Assume that the billiard dynamics associated to $\Omega$ is ergodic. Then for every $\sigma>0$, there is a sub-sequence  $\mathcal N=\mathcal N_\sigma \subset \mathbb N$ of density one  so that along that subsequence,
$$\lim_{\substack{n\to \infty \\ n\in \mathcal N}}d_n(\sigma)= \frac{2\length(\partial \Omega)}{\area (\Omega)}\cdot \sigma .
$$
%as $n\to \infty$, with $n\in \mathcal N$.
\end{thm}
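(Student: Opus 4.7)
The strategy is to combine a Hadamard-type first variation formula with boundary quantum ergodicity. Standard perturbation theory for the Robin Laplacian gives
\[
\frac{d}{dt}\lambda_n^t = \int_{\partial\Omega}|\phi_n^t|^2\,ds,
\]
where $\phi_n^t$ is the $L^2(\Omega)$-normalized Robin eigenfunction with parameter $t$. Integrating from $0$ to $\sigma$ yields the identity
\[
d_n(\sigma)=\int_0^\sigma \int_{\partial\Omega}|\phi_n^t|^2\,ds\,dt,
\]
which, together with a suitable boundary Weyl law, already underlies Theorem~\ref{thm:mean of E RN gaps}. The task therefore reduces to showing that along a density-one sequence of $n$, the inner boundary mass $\int_{\partial\Omega}|\phi_n^t|^2\,ds$ converges, sufficiently uniformly in $t\in[0,\sigma]$, to $2\length(\partial\Omega)/\area(\Omega)$.

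The next ingredient is boundary quantum ergodicity. When the billiard flow on $\Omega$ is ergodic, the Hassell--Zelditch theorem (with the Burq variant covering Neumann-type boundary conditions) asserts that for any continuous $a$ on $\partial\Omega$, a density-one subsequence of Neumann eigenfunctions satisfies
\[
\int_{\partial\Omega}a(q)|\phi_n^0(q)|^2\,ds(q)\longrightarrow \frac{2}{\area(\Omega)}\int_{\partial\Omega}a(q)\,ds(q),
\]
so for $a\equiv 1$ the limit is precisely the required constant. The underlying content is the Schnirelman-type variance bound
\[
\frac1N \sum_{n\leq N}\left(\int_{\partial\Omega}|\phi_n^t|^2\,ds-\frac{2\length(\partial\Omega)}{\area(\Omega)}\right)^2 \longrightarrow 0 \qquad (N\to\infty),
\]
which extends to each fixed Robin parameter $t\geq 0$, since the Robin term is sub-principal and does not affect the boundary symbol computation.

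To promote this into a single density-one sequence for the integrated quantity $d_n(\sigma)$, I would apply Cauchy--Schwarz to the key identity:
\[
\left(d_n(\sigma)-\frac{2\length(\partial\Omega)\sigma}{\area(\Omega)}\right)^2 \leq \sigma \int_0^\sigma \left(\int_{\partial\Omega}|\phi_n^t|^2\,ds-\frac{2\length(\partial\Omega)}{\area(\Omega)}\right)^2 dt.
\]
Averaging over $n\leq N$ and exchanging the sum with the $t$-integral by Fubini reduces the problem to showing that the $N$-variance in the previous display tends to zero after integrating in $t\in[0,\sigma]$. The pointwise-in-$t$ convergence supplied by boundary QE, combined with a uniform upper bound on the integrand (via the trace bound $\int_{\partial\Omega}|\phi_n^t|^2\,ds \ll_\Omega \sqrt{\lambda_n^0}+\sigma$ and Weyl's law, which bound the $N$-average of the square by a constant independent of $t$) yields via dominated convergence
\[
\frac1N\sum_{n\leq N}\left(d_n(\sigma)-\frac{2\length(\partial\Omega)\sigma}{\area(\Omega)}\right)^2 \longrightarrow 0,
\]
and a standard Chebyshev extraction produces a density-one sequence $\mathcal N_\sigma$ along which $d_n(\sigma)\to 2\length(\partial\Omega)\sigma/\area(\Omega)$. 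The main obstacle is the uniformity in $t$: the density-one subsequence at each fixed $t$ depends on $t$, so one must either quantify the QE convergence with a rate locally integrable in $t$, or dominate the variance integrand cleanly as above. A secondary technical point is that the benchmark ergodic billiards (stadium, Sinai) have only piecewise-smooth boundary, so the microlocal formulation of boundary QE requires excising the singular set and controlling its contribution via boundary-trace bounds together with Theorem~\ref{thm:mean of E RN gaps}.
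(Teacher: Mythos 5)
Your overall architecture --- the variational identity $d_n(\sigma)=\int_0^\sigma\int_{\partial\Omega}|u_{n,t}|^2\,ds\,dt$, the Hassell--Zelditch boundary variance estimate at each fixed $t$, interchange of the $n$-average with the $t$-integral, dominated convergence, and a Chebyshev extraction --- is the same as the paper's. The genuine gap is in your domination step. Because you square, writing $c=2\length(\partial\Omega)/\area(\Omega)$, you need $\sup_N \frac1N\sum_{n\le N}\bigl(\int_{\partial\Omega}|u_{n,t}|^2\,ds-c\bigr)^2$ to be bounded by a function of $t$ integrable on $[0,\sigma]$, and you claim the trace bound $\int_{\partial\Omega}|u_{n,t}|^2\,ds\ll\sqrt{\lambda_n^0}+\sigma$ together with Weyl's law bounds this $N$-average of squares by a constant. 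It does not: that pointwise bound gives $\bigl(\int_{\partial\Omega}|u_{n,t}|^2\,ds\bigr)^2\ll\lambda_n\asymp n$, so the $N$-average is only $\ll N$, which diverges; even the sharper smooth-boundary bound $\lambda_n^{1/3}$ of Barnett--Hassell--Tacy would give $\ll N^{2/3}$. No uniform-in-$t$ bound on the \emph{second} moment of boundary traces follows from the results you cite, and none is proved in the paper, so the dominated convergence step in your second-moment formulation is unsupported.

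The repair --- and the paper's actual route --- is not to square. Use the triangle inequality, $\bigl|d_n(\sigma)-c\sigma\bigr|\le\int_0^\sigma\bigl|\int_{\partial\Omega}|u_{n,t}|^2\,ds-c\bigr|\,dt$, average over $n\le N$, and set $S_N(t)=\frac1N\sum_{n\le N}\bigl|\int_{\partial\Omega}|u_{n,t}|^2\,ds-c\bigr|$. Then $S_N(t)\to0$ for each fixed $t$ by Cauchy--Schwarz from the Hassell--Zelditch variance bound, and, crucially, $S_N(t)\le C(\Omega)$ uniformly in $t$ and $N$ because of the averaged bound $\frac1N\sum_{n\le N}\int_{\partial\Omega}|u_{n,t}|^2\,ds\le C$, which the paper obtains from the Gaussian upper bound for the Robin heat kernel restricted to the boundary (Lemma~\ref{uniform local weyl law}); a pointwise trace bound does not suffice even here, since $\frac1N\sum_{n\le N}\sqrt{\lambda_n}\asymp\sqrt N$. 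Dominated convergence then yields $\frac1N\sum_{n\le N}\bigl|d_n(\sigma)-c\sigma\bigr|\to0$, and Chebyshev produces the density-one subsequence. Finally, your secondary concern about piecewise-smooth boundaries is unnecessary: the Hassell--Zelditch variance estimate and their heat-kernel bound are stated for piecewise smooth domains, which is exactly why the theorem applies to the stadium and Sinai-type billiards without any excision argument.
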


 If the billiard dynamics is uniformly hyperbolic, we expect that more is true,  that {\em all} the gaps  converge to the mean.
 %\marginpar{\textcolor{red}{do we believe it? At present our numerics don't support it}}
%See a numerical study conducted for one such uniformly hyperbolic billiard in \S~\ref{sec:numerics}.

A key ingredient in the proofs of the above results is that they can be connected to   $L^2$ restriction estimates for eigenfunctions on the boundary via a variational formula for the gaps (Lemma~\ref{lem:formula for dn})
%\marginpar{This is for constant $\sigma$}
 \begin{equation*}%\label{formula for dn}
d_n(\sigma) =  \int_0^\sigma \left( \int_{\partial \Omega} |u_{n,\tau}|^2 ds  \right) d\tau
\end{equation*}
where $u_{n,\tau}$ is any $L^2(\Omega)$-normalized eigenfunction associated with $\lambda_n^\tau$.

\subsection{Generalizations}
Most of the above results easily extend to higher dimensions: The upper bound (Theorem~\ref{general smooth upper bd}), the mean value result (Theorem~\ref{thm:mean of E RN gaps}), and the almost sure convergence for ergodic billiards (Theorem~\ref{thm:QE RN gaps}). At this stage our proof of the lower bound (Theorem~\ref{thm:RN lower bound}) is restricted to dimension $2$. 

In section~\ref{sec:Variable sigma} we discuss extensions of the above results to the case of variable boundary conditions $\sigma: \partial \Omega\to \R$.

\subsection{Rectangles} % and the equilateral triangle}
 For the special case of rectangles, we show that the RN gaps are bounded:
\begin{thm}\label{thm:uniform upper bound}
Let $\Omega$ be a rectangle. Then for every $\sigma>0$  there is some $C_\Omega(\sigma)>0$ so that for all  $n$,
$$d_n(\sigma) \leq C_\Omega(\sigma).
$$
%The same holds for an equilateral triangle.
\end{thm}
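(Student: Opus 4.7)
The plan is to exploit separation of variables to reduce the estimate to a one-dimensional boundary-value bound, and then to recover the result for the sorted two-dimensional eigenvalues via a pigeonhole argument. Writing $\Omega=[0,a]\times[0,b]$, for every $\tau\geq 0$ one can diagonalize the Robin Laplacian by tensor products $u_{m,k}^\tau(x,y)=X_m^\tau(x)\,Y_k^\tau(y)$, where $X_m^\tau$ and $Y_k^\tau$ are $L^2$-normalized 1D Robin eigenfunctions on $[0,a]$ and $[0,b]$ with the same parameter $\tau$; the 2D eigenvalue is the sum $\Lambda_{m,k}(\tau)=\mu_m(\tau)+\nu_k(\tau)$ of the 1D ones.

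The core of the argument is a uniform boundary estimate for the 1D Robin eigenfunctions: for every mode $m\geq 0$ and every $\tau\geq 0$,
\[
|X_m^\tau(0)|^2+|X_m^\tau(a)|^2\leq \frac{4}{a}.
\]
I would establish this via Rellich-type identities. Multiplying the 1D equation $-X''=\alpha^2 X$ by $2xX'$ and integrating by parts yields $\int_0^a (X')^2=a(\alpha^2+\tau^2)X(a)^2-\alpha^2$; the symmetric identity obtained from the multiplier $2(a-x)X'$ gives $\int_0^a (X')^2=a(\alpha^2+\tau^2)X(0)^2-\alpha^2$. Combined with the basic identity $\int_0^a (X')^2=\alpha^2-\tau(|X(0)|^2+|X(a)|^2)$ (from testing the equation against $X$ and applying the Robin conditions), these force $|X(0)|^2=|X(a)|^2=2\alpha^2/\bigl(a(\alpha^2+\tau^2)+2\tau\bigr)\leq 2/a$.

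With this bound in hand, the 1D analogue of Lemma~\ref{lem:formula for dn} gives $\mu_m(\sigma)-\mu_m(0)\leq 4\sigma/a$, and similarly $\nu_k(\sigma)-\nu_k(0)\leq 4\sigma/b$, so every product eigenvalue satisfies $\Lambda_{m,k}(\sigma)-\Lambda_{m,k}(0)\leq C$ where $C=C_\Omega(\sigma):=4\sigma(1/a+1/b)$. To pass from this ``by-branch'' comparison to the sorted eigenvalues, I would use a pigeonhole argument: there are at least $n$ index pairs $(m,k)$ with $\Lambda_{m,k}(0)\leq \lambda_n^0$ (namely those producing the $n$ smallest Neumann eigenvalues), and for each such pair $\Lambda_{m,k}(\sigma)\leq \lambda_n^0+C$. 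This exhibits $n$ Robin eigenvalues not exceeding $\lambda_n^0+C$, hence $\lambda_n^\sigma\leq \lambda_n^0+C$ and $d_n(\sigma)\leq C_\Omega(\sigma)$.

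The only real technical difficulty is the sharp 1D boundary-value estimate in the second paragraph; once that is secured, the product structure and the sorting step are routine. A minor point is to check the identities at the Neumann ground state $\alpha=\tau=0$, where the closed formula degenerates, but the bound still holds directly since the constant eigenfunction $X\equiv 1/\sqrt{a}$ satisfies $|X(0)|^2+|X(a)|^2=2/a$.
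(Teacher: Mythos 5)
Your argument is correct, and it follows the paper's overall architecture — reduce to the one-dimensional Robin problem via separation of variables, bound the gap on each product branch, then pass to the ordered eigenvalues by the counting/pigeonhole step (your last paragraph is verbatim the paper's argument, up to a harmless off-by-one: with the $0$-based indexing you need $n+1$ pairs $(m,k)$ with $\Lambda_{m,k}(0)\le\lambda_n^0$, which is exactly what the Neumann eigenvalues $\lambda_0^0,\dots,\lambda_n^0$ provide). Where you genuinely diverge is the one-dimensional ingredient. The paper works on the unit interval with the explicit secular equation $\tan k = 2\sigma k/(k^2-\sigma^2)$, uses the bracketing $k_n\in(n\pi,(n+1)\pi)$ and an asymptotic expansion to get $k_n(\sigma)^2-k_n(0)^2\sim 4\sigma$, and deduces boundedness of the 1D gaps from the asymptotics. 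You instead compute the boundary values of the 1D eigenfunctions exactly via Rellich-type multipliers, obtaining $|X(0)|^2=|X(a)|^2=2\alpha^2/\bigl(a(\alpha^2+\tau^2)+2\tau\bigr)\le 2/a$, and then integrate the 1D analogue of the variational formula \eqref{formula for dn}; your identities check out, and the degenerate case $\alpha=\tau=0$ is correctly disposed of. Your route buys an explicit, non-asymptotic constant $C_\Omega(\sigma)=4\sigma(1/a+1/b)$ valid for every mode, avoids the transcendental equation entirely, and is in the spirit of the Rellich/variational machinery the paper itself uses for the lower bound (Theorem~\ref{thm:RN lower bound}); the paper's route additionally yields the asymptotics of the 1D gaps (convergence to the mean value $4\sigma$ on the unit interval), which your uniform bound does not by itself give. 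The only points to make explicit in a written-up version are that the tensor-product eigenfunctions exhaust the rectangle's spectrum, and that the derivative formula $d\mu_m/d\tau=|X_m^\tau(0)|^2+|X_m^\tau(a)|^2$ is legitimate in 1D (simplicity and analytic dependence of Sturm--Liouville eigenvalues, or the interval case of the cited \cite{AFK} lemma); both are routine.
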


%  \subsection{A consequence for the level spacing distribution}
We use Theorem~\ref{thm:uniform upper bound} 
to draw a consequence for the level spacing distribution of the Robin eigenvalues on a rectangle: 
 Let $x_0\leq x_1\leq x_2\leq \dots$ be a sequence of levels, and $\delta_n=x_{n+1}-x_n$ be the nearest neighbour gaps. We assume that $x_N=N+o(N)$ so that the average gap is unity. The level spacing distribution $P(s)$ of the sequence is then defined as
\[
\int_0^y P(s)ds = \lim_{N\to \infty} \frac 1N \#\{n\leq N: \delta_n\leq y\}
\]
(assuming that the limit exists).

It is well known that the level spacing distribution for the Neumann (or Dirichlet) eigenvalues on the square is a delta-function at the origin, due to large arithmetic multiplicities in the spectrum. Once we put a Robin boundary condition, we can show \cite{RWrectangle}
that the multiplicities disappear for $\sigma>0$ sufficiently small, except for systematic doubling due to symmetry. 
% (at least for sufficiently small Robin parameter). 
Nonetheless, even after desymmetrizing (removing the systematic multiplicities) we show that the level spacing does not change:
\begin{thm}
 The level spacing distribution for the desymmetrized
 Robin spectrum on the square  is a delta-function at the origin.
\end{thm}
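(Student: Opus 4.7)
The plan is to combine the uniform bound from Theorem~\ref{thm:uniform upper bound} with the classical fact (Landau) that few integers are sums of two squares: the desymmetrized Neumann spectrum of the square is dominated by arithmetic clusters of high multiplicity, which after a Robin perturbation remain confined to bounded intervals and hence produce mostly very small gaps.

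Take $\Omega=[0,\pi]^2$. The desymmetrized Neumann spectrum (say, the symmetric sector under $(x,y)\leftrightarrow(y,x)$) consists of the integers $m^2+n^2$ with $0\le m\le n$, listed with representation multiplicity. By Weyl's law the number of levels $\le X$ (with multiplicity) is $N(X)\sim \pi X/8$, and by Landau's theorem the number of \emph{distinct} values is $M(X)\sim K X/\sqrt{\log X}$, so $M(X)=o(N(X))$. Group indices into clusters on which $\lambda^0_n$ is constant, and call $n\le N$ intra-cluster if $\lambda^0_n=\lambda^0_{n+1}$; the number of inter-cluster indices $n\le N$ is $M-1=o(N)$.

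By Theorem~\ref{thm:uniform upper bound}, applied within the fixed symmetry sector (valid since the variational formula of Lemma~\ref{lem:formula for dn} can be evaluated on eigenfunctions of a fixed symmetry class), there is $C=C_\Omega(\sigma)>0$ with $0<d_n(\sigma)\le C$ uniformly. Hence after the Robin perturbation each Neumann cluster is contained in an interval of length $\le C$; the intra-cluster gaps $\delta^\sigma_n$ within any single cluster telescope to that cluster's spread $\le C$, so summing over the $M$ clusters gives $\sum_{n\text{ intra-cluster},\, n\le N}\delta^\sigma_n\le CM$. Applying Markov's inequality and normalising by $\bar\delta=8/\pi$,
\[
\#\{n\le N: \delta^\sigma_n/\bar\delta > \epsilon\}\le \frac{CM}{\epsilon\bar\delta}+(M-1) = o(N)
\]
for every $\epsilon>0$, so $\int_0^y P(s)\,ds=1$ for every $y>0$, i.e.\ $P=\delta_0$.

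The only genuine subtlety is justifying that Theorem~\ref{thm:uniform upper bound} applies within a single symmetry sector rather than just to the full Robin spectrum. This is immediate from the variational formula of Lemma~\ref{lem:formula for dn}, which may be evaluated on symmetric (or antisymmetric) eigenfunctions; the proof of Theorem~\ref{thm:uniform upper bound} then carries over unchanged. The remainder of the argument is purely combinatorial, with Landau's asymptotic $M(X)\asymp X/\sqrt{\log X}$ providing the density-one subsequence of small gaps.
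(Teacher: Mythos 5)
Your proof is correct and is essentially the paper's own argument: the uniform bound of Theorem~\ref{thm:uniform upper bound} confines each Neumann multiplicity cluster $m^2+n^2$ (with $m\le n$) to an interval of length $C(\sigma)$, Landau's theorem makes the number of clusters among the first $N$ levels $o(N)$, and telescoping the intra-cluster gaps plus a Chebyshev/Markov count shows that at most $o(N)$ normalized gaps exceed any fixed $y>0$. The one cosmetic point is your justification of the uniform bound in the desymmetrized setting: the variational formula of Lemma~\ref{lem:formula for dn} is not the mechanism (by itself it only yields the $n^{1/3}$ bound), but, as you also note, the proof of Theorem~\ref{thm:uniform upper bound} carries over verbatim since the desymmetrized levels are exactly $k_m(\sigma)^2+k_n(\sigma)^2$ with $m\le n$, the elementwise difference is bounded by the one-dimensional estimate, and the passage to ordered eigenvalues in \S\ref{sec:pass to ordered evs} restricts to that index set unchanged.
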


% \begin{remark}
%The same result   holds for the equilateral triangle.
% \end{remark}

\subsection{The disk}
As we will explain, upper bounds for the gaps $d_n$ can be obtained from upper bounds for the remainder term in Weyl's law for the Robin/Neumann problem.
While this method will usually fall short of Theorem~\ref{general smooth upper bd}, for the disk it gives a better bound.
In that case,  Kuznetsov and Fedosov \cite{KuzFed65} (see also  Colin de Verdi\'ere  \cite{CdV disk}) gave an improved remainder term in Weyl's law for Dirichlet boundary conditions,
by relating the problem to  counting (shifted)  lattice points in a certain cusped domain.
 With some work, the argument  can also be adapted to the Robin case
 (see \S~\ref{sec:weyllatdisk} and Appendix~\ref{sec:BesselAppendix}),  which recovers Theorem~\ref{general smooth upper bd} in this special case.
 The remainder term for the lattice count  was  improved by Guo, Wang and  Wang \cite{GWW},
 from which we  obtain:
 \begin{thm}\label{thm:upper bd for disk}
For the unit disk,   for any fixed $\sigma>0$, we have
\[
d_n(\sigma)  =O(n^{1/3-\delta}), \quad \delta=1/990.
\]
 \end{thm}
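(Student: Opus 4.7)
The plan is to establish, for the unit disk, an improved Weyl remainder for the Robin counting function via the lattice-point reformulation of Kuznetsov--Fedosov and Colin de Verdi\`ere, and then invert this back to a pointwise bound on $d_n(\sigma)$. Three ingredients are needed: reformulating $N^\sigma(\lambda)=\#\{n:\lambda_n^\sigma\leq\lambda\}$ as a lattice-point count in a cusped region, applying the Guo--Wang--Wang bound, and translating the resulting Weyl remainder into a bound on the individual gaps.

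First, separation of variables in polar coordinates shows that the Robin eigenfunctions on the unit disk are $J_{|m|}(\sqrt{\lambda}\,r)e^{im\theta}$ with $m \in \Z$, and the boundary condition reduces to $\sqrt{\lambda}J_m'(\sqrt{\lambda}) + \sigma J_m(\sqrt{\lambda}) = 0$. Writing $\rho_{m,k}^\sigma$ for the $k$-th positive root of $xJ_m'(x) + \sigma J_m(x)$, the counting function becomes
\[
N^\sigma(\lambda) = \#\{(m,k) \in \Z_{\geq 0}\times \Z_{\geq 1} : \rho_{m,k}^\sigma \leq \sqrt{\lambda}\},
\]
and similarly $N^0$ involves the zeros of $J_m'$. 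Uniform asymptotics for $\rho_{m,k}^\sigma$ in the three ranges $m \ll \sqrt{\lambda}$, $m \sim \sqrt{\lambda}$, and $m \gg \sqrt{\lambda}$ (the Robin versions treated in Appendix~\ref{sec:BesselAppendix}) allow one to invert the condition $\rho_{m,k}^\sigma \leq \sqrt{\lambda}$ into an explicit inequality $k \leq F_\sigma(m,\sqrt{\lambda})$, so that $N^\sigma(\lambda)$ counts lattice points under the curve $k = F_\sigma(m,\sqrt{\lambda})$ --- the same type of cusped region appearing in the Dirichlet analyses of \cite{KuzFed65, CdV disk}.

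Next, I would apply the van der Corput--type exponent pair bound of Guo--Wang--Wang \cite{GWW} to this lattice-point count, uniformly for fixed $\sigma > 0$ (since the Robin correction perturbs only lower-order terms in $F_\sigma$, without changing the curvature properties that drive the argument). This yields $N^\sigma(\lambda) = W^\sigma(\lambda) + O\bigl(\lambda^{1/3-\delta}\bigr)$ with $\delta = 1/990$, together with the analogous bound for $\sigma = 0$, where $W^\sigma$ is the smooth Weyl polynomial. Subtracting the identities $N^\sigma(\lambda_n^\sigma) = n = N^0(\lambda_n^0)$ gives
\[
W^\sigma(\lambda_n^\sigma) - W^0(\lambda_n^0) = O\bigl(\lambda_n^{1/3-\delta}\bigr);
\]
since $W^\sigma(\lambda) - W^0(\lambda)$ stays bounded as $\lambda \to \infty$ (as forced by Theorem~\ref{thm:mean of E RN gaps}) and $(W^0)'(\lambda) \asymp 1$ for large $\lambda$, the mean value theorem yields $d_n(\sigma) = O(1) + O(\lambda_n^{1/3-\delta})$, which combined with $\lambda_n \asymp n$ gives the claimed bound.

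The main obstacle is the first step: the uniform Bessel-zero asymptotics must be carried out for the Robin equation $xJ_m'(x) + \sigma J_m(x) = 0$ in the transition region $m \approx \sqrt{\lambda}$, where the Airy-type expansion of $J_m$ has to be matched against the Robin boundary condition with care in order to extract $F_\sigma$ to sufficient precision. Once $F_\sigma$ is pinned down, verifying that the Guo--Wang--Wang argument survives the $\sigma$-perturbation of the curve and then executing the Tauberian inversion are comparatively routine.
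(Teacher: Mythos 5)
Your overall route is the same as the paper's: separation of variables, reduction of the Robin counting function to a Kuznetsov--Fedosov/Colin de Verdi\`ere lattice-point count in a cusped region, the Guo--Wang--Wang remainder, and then an inversion of the two Weyl expansions into a bound on $d_n(\sigma)$ (the paper's Lemma~\ref{lem:dn and weyl}); you also correctly identify the uniform Robin--Bessel asymptotics in the transition region as the main analytic burden (this is Lemma~\ref{lem:GeomEvs} and Appendix~\ref{sec:BesselAppendix}). The one place where your plan as written would run into trouble is the middle step. You propose a $\sigma$-dependent curve $F_\sigma$ and then want to ``verify that the Guo--Wang--Wang argument survives the $\sigma$-perturbation,'' on the grounds that the Robin correction only perturbs lower-order terms uniformly. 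That uniformity is false near the cusp: for $k$ bounded and $m$ large, the Robin (and already the Neumann) zeros satisfy $\kappa_{m,k}=F(m,k-\tfrac34)+O(m^{1/3}/k^{4/3})$, so for $k=1$ the displacement is as large as $m^{1/3}$ --- not a lower-order perturbation of the curve, and exactly of the size of the error term you are trying to beat. The paper avoids re-examining the Guo--Wang--Wang proof altogether: the limiting domain $D$ and the lattice shift $3/4$ are $\sigma$-independent, so their theorem is used as a black box, and the $\sigma$-dependence (together with the cusp problem) is absorbed by a two-sided sandwich (Proposition~\ref{rel between weyl and lattice}): for $k\le\mu^{4/7}$ one counts row by row, using monotonicity of $x\mapsto F(x,k-\tfrac34)$ to bound the number of ambiguous lattice points in row $k$ by $O(\mu^{1/3}/k^{4/3})+1$, which sums to $O(\mu^{4/7})$, while for $k>\mu^{4/7}$ the zeros are within $O(\mu^{-3/7})$ of the lattice model and give only a dilation shift $\mu\mapsto\mu\pm C\mu^{-3/7}$. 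Both errors are $o(\mu^{2/3-2\delta})$, which is what preserves the exponent $1/3-\delta$. If you insist on a genuinely $\sigma$-deformed curve you would have to redo the exponent-pair analysis; the cleaner and intended argument is to show the Robin zeros track the \emph{same} shifted lattice and push all $\sigma$-dependence into these controllable errors.

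Two smaller points: the identity $N^\sigma(\lambda_n^\sigma)=n$ is not exact (multiplicities and indexing); one needs $N_\sigma(\lambda_n^\sigma)=n+O(n^\theta)$, which the paper extracts from the Weyl remainder itself before subtracting. And the boundedness of $W^\sigma-W^0$ should not be ``forced by'' Theorem~\ref{thm:mean of E RN gaps}; it comes for free from the lattice analysis, which shows that the Robin and Neumann counting functions have the \emph{same} two-term Weyl polynomial (same $3/4$ shift), after which the subtraction argument of Lemma~\ref{lem:dn and weyl} gives $d_n(\sigma)=O_\sigma(n^{1/3-\delta})$.
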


\section{Numerics}\label{sec:numerics}

We present some numerical experiments on the fluctuation of the RN gaps. In all cases, we took the Robin constant to be $\sigma=1$. Displayed are the run sequence plots  of the RN gaps.  The solid (green) curve is the cumulative mean. The solid (red) horizontal line is the limiting mean value $2\length(\partial \Omega)/\area(\Omega) $
obtained in Theorem~\ref{thm:mean of E RN gaps}.

In Figure~\ref{circle and square gaps} we present numerics for two domains where the Neumann and Dirichlet problems are solvable, by means of separation of variables, the square and the disk. These were generated using Mathematica \cite{Mathematica}. For the square, we are reduced to finding Robin eigenvalues on an interval as (numerical) solutions to a secular equation,
see \S~\ref{sec:rectangles}, and have used Mathematica to find these.
%The equilateral triangle does not admit separation of variables, but the eigenvalues can be found by numerically solving a coupled system of secular equations discovered by McCartin \cite{McCartin2004}  (see \S~\ref{sec:equilateral}). The results displayed in Figure~\ref{equilateral gaps} are similar to those of the square.
%For both we have uniform upper bounds for the gaps (Theorem~\ref{thm:uniform upper bound}).
   \begin{figure}[ht]
    \begin{subfigure}[b]{0.45\textwidth}
    \includegraphics[width=\textwidth]{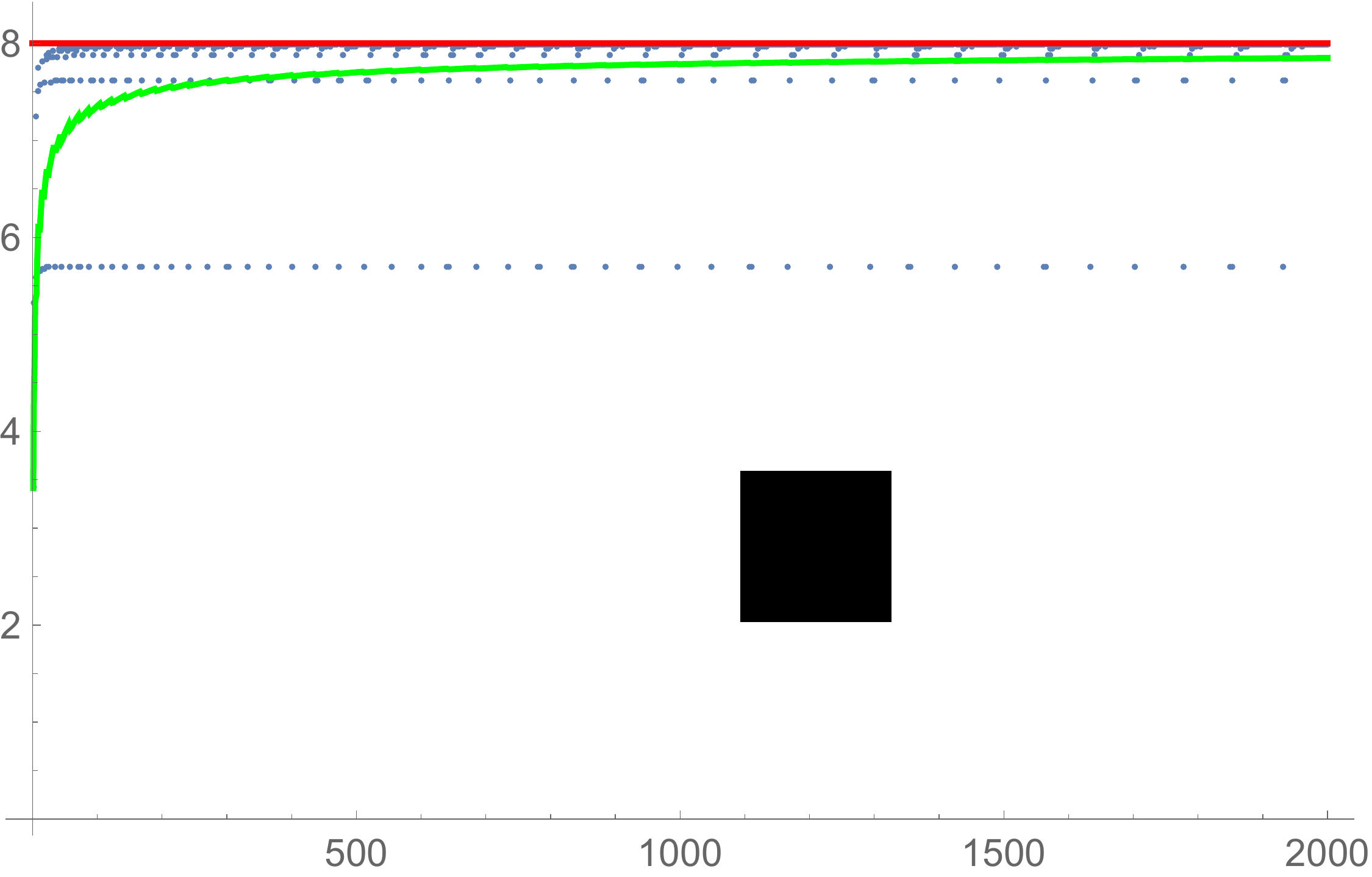}
    \subcaption{The unit square.\label{figsub:square}}

  \end{subfigure}
    \hfill
  \begin{subfigure}[b]{0.45\textwidth}
    \includegraphics[width=\textwidth]{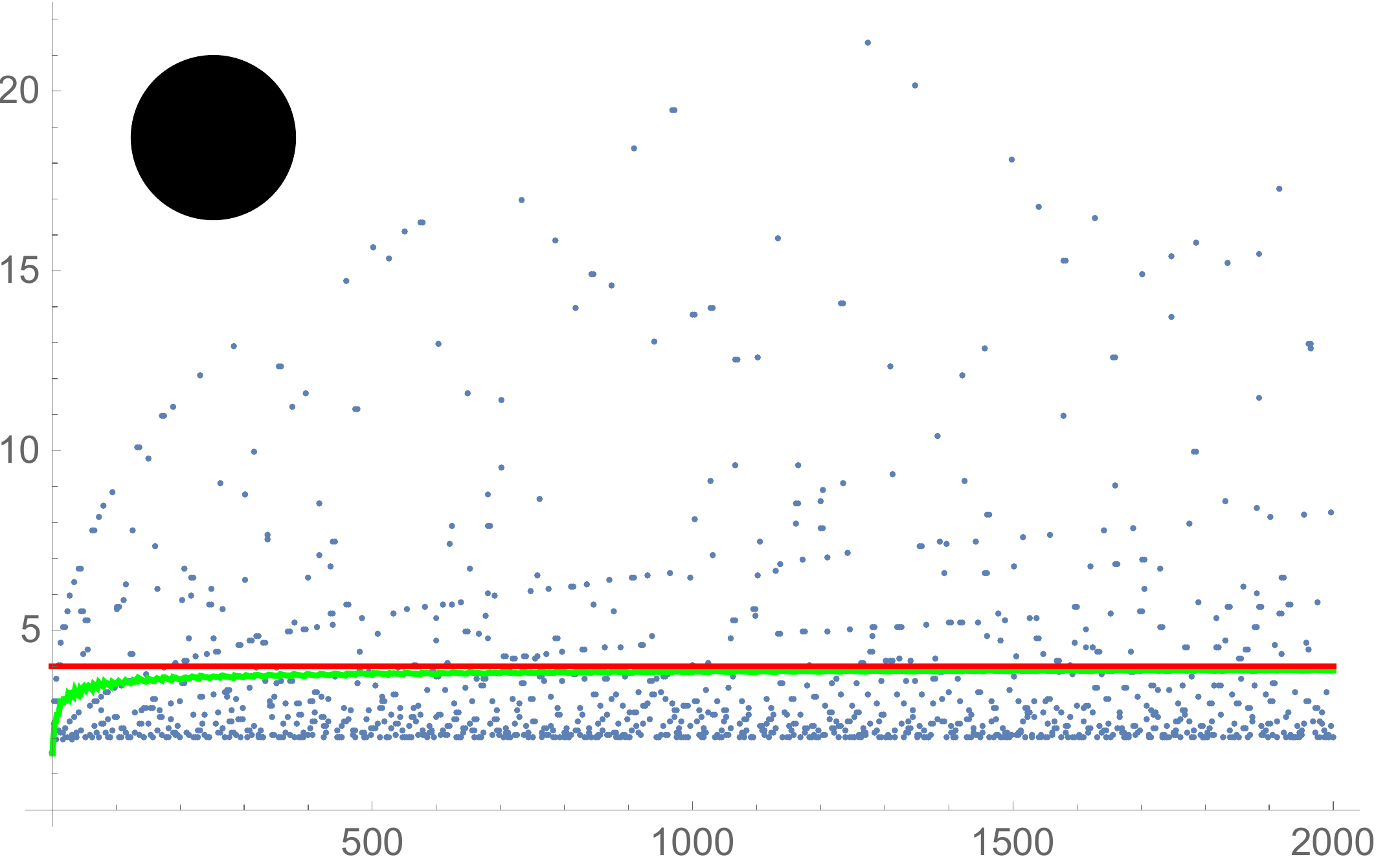}
    \subcaption{The unit disk.\label{figsub:circle}}

  \end{subfigure}
  \caption{The first $2000$ RN gaps  for unit square \subref{figsub:square} and for the unit disk \subref{figsub:circle}.
    }
   \label{circle and square gaps}
\end{figure}

% \begin{figure}[ht]
% \begin{subfigure}[b]{0.3\textwidth}
%    \includegraphics[width=\textwidth]{equilateralfig.png}
%    \caption{The equilateral triangle with inscribed radius $r$.}
%   \label{fig:equilateral}
%  \end{subfigure}
%  \hfill
%  \begin{subfigure}[b]{0.5\textwidth}
%    \includegraphics[width=\textwidth]{FullEquiTriDiffs500evsrunning.pdf}
%    \caption{ the first 500 RN gaps.}
%  \end{subfigure}
%  \caption{The first 500 RN gaps  for the equilateral triangle  \eqref{fig:equilateral} with sidelength $1$.
%   }
%   \label{equilateral gaps}
%\end{figure}

The disk admits separation of variables, and as is well known the Dirichlet eigenvalues on the unit disk are the squares of the
  positive zeros of the Bessel functions $J_n(x)$. The positive Neumann eigenvalues are squares of the positive zeros   of the derivatives  $J_n'(x)$, and the Robin eigenvalues are the squares of the positive zeros of $xJ_n'( x) + \sigma J_n(x)$. We generated these using Mathematica,  see Figure~\ref{figsub:circle}.

 \begin{figure}[ht]
  \begin{subfigure}[b]{0.45\textwidth}
    \includegraphics[width=\textwidth]{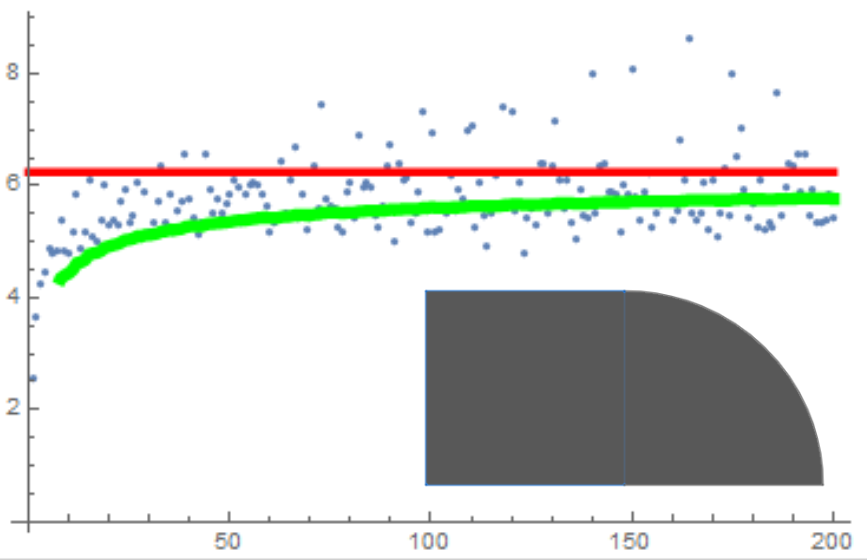}
    \subcaption{The stadium billiard.    \label{figsub:bunimovich}}
  \end{subfigure}
  \hfill
  \begin{subfigure}[b]{0.45\textwidth}
    \includegraphics[width=\textwidth]{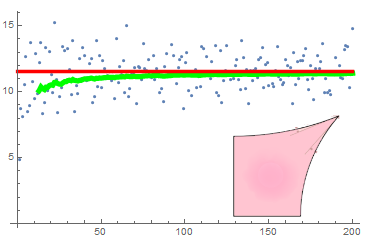}%{quarterbarnettgaps}
    \subcaption{A dispersing billiard.    \label{figsub:barnett}}
  \end{subfigure}
  \caption{The first  200 RN gaps   for the ergodic quarter-stadium billiard \subref{figsub:bunimovich}, a quarter of the shape formed by  gluing two half-disks to a square of sidelength $2$, and for the uniformly hyperbolic billiard consisting of  a quarter of the shape formed by the intersection of the exteriors of four disks \subref{figsub:barnett}.}
     \label{fig:ergodic gaps}
\end{figure}
For the remaining cases we used the finite elements package FreeFem \cite{FreeFem}, \cite{LevitinFreeFem}.
In Figure~\ref{fig:ergodic gaps} we display two ergodic examples, the quarter-stadium billiard and a uniformly hyperbolic, Sinai-type dispersing billiard which was investigated numerically by Barnett \cite{BarnettQE}.

It is also of interest to understand rational polygons, that is simple plane polygons all of whose vertex angles are rational multiples of $\pi$  (Figure~\ref{fig:pseudointegrable gaps}), when we expect an analogue of Theorem~\ref{thm:QE RN gaps} to hold, compare \cite{MR}.
 \begin{figure}[ht]
  \begin{subfigure}[b]{0.45\textwidth}
    \includegraphics[width=\textwidth]{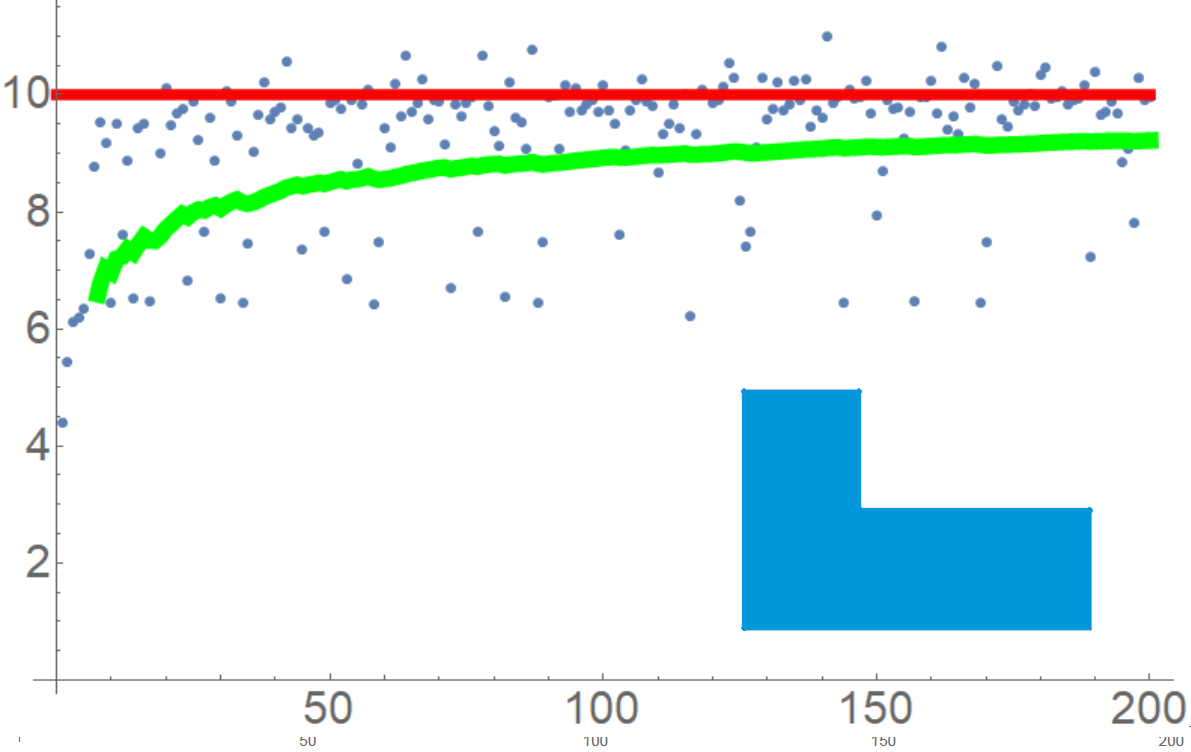}
    \subcaption{An L-shaped billiard.     \label{figsub:Lshaped}}
  \end{subfigure}
  \hfill
  \begin{subfigure}[b]{0.45\textwidth}
    \includegraphics[width=\textwidth]{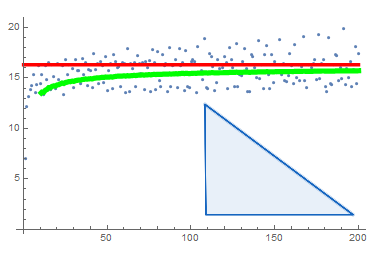}
    \subcaption{Right triangle with angle $\pi/5$.    \label{figsub:pi5triangle}}
  \end{subfigure}
  \caption{The first  $200$ RN gaps   for two examples of rational polygons:
  An L-shaped billiard \subref{figsub:Lshaped} made of $4$ squares of sidelength $1/2$,  and a right triangle with an angle $\pi/5$
  and a long side of length unity  \subref{figsub:pi5triangle}.
   }
   \label{fig:pseudointegrable gaps}
\end{figure}
The case of dynamics with a mixed phase space, such as the mushroom billiard investigated by
Bunimovich \cite{Bunimovich mushroom} (see also the survey \cite{PL})  also deserves study, see  Figure~\ref{fig:mushroom}.
\begin{figure}[ht]
  \begin{subfigure}[b]{0.3\textwidth}
    \includegraphics[width=\textwidth]{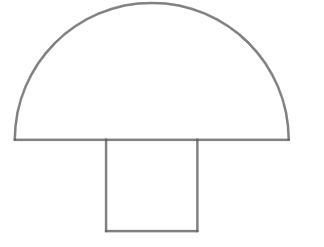}%{mushroompic}
    \subcaption{The mushroom billiard.}
    \label{figsub:mushroompic}
  \end{subfigure}
  \hfill
  \begin{subfigure}[b]{0.5\textwidth}
    \includegraphics[width=\textwidth]{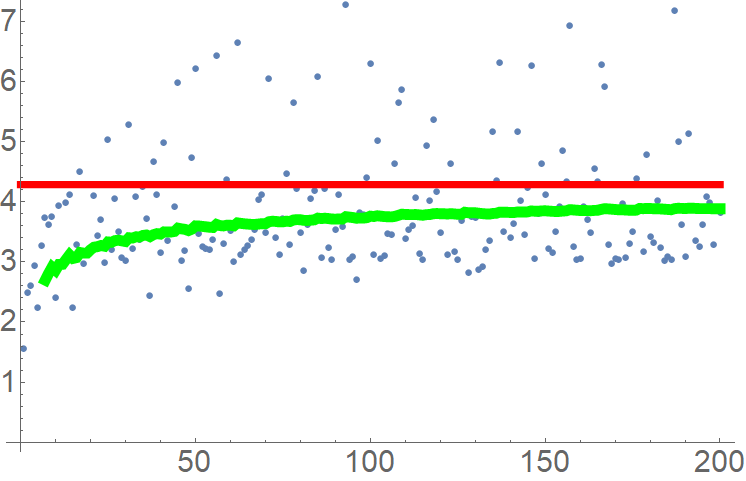}
    \subcaption{The first 200 RN gaps}
    \label{figsub:mushroom200}
  \end{subfigure}
  \caption{The first  $200$ RN gaps   for the mushroom billiard, with a half-disk of diameter 3 on top of a unit square, which has mixed (chaotic and regular) billiard dynamics.}
  \label{fig:mushroom}
\end{figure}

%\newpage

\section{Generalities about the RN gaps}

\subsection{Robin-Neumann bracketing and positivity of the RN gaps}

We recall the min-max characterization of the Robin eigenvalues
\[
\lambda_n^\sigma = \inf_{\substack{M\subset H^1(\Omega) \\\dim M=n}} \sup_{0\neq u\in M} \frac{\int_\Omega |\nabla u|^2 dx +\int_{\partial\Omega} \sigma u^2 ds}{\int_\Omega u^2 dx}
\]
where $H^1(\Omega)$ is the Sobolev space. This shows that $\lambda_n^\sigma\geq \lambda_n^0$ if $\sigma>0$. Likewise, there is a min-max characterization of the Dirichlet eigenvalues with $H^1(\Omega)$ replaced by the subspace $H^1_0(\Omega)$, the closure  of functions vanishing near the boundary:
\[
\lambda_n^\infty = \inf_{\substack{M\subset H^1_0(\Omega) \\\dim M=n}} \sup_{0\neq u\in M} \frac{\int_\Omega |\nabla u|^2 dx }{\int_\Omega u^2 dx} .
\]
This shows that $\lambda_n^\sigma\leq \lambda_n^\infty$.  

In fact, we have strict inequality,  
\[
\lambda_n^0<\lambda_n^\sigma<\lambda_n^\infty.
\] 
This is proved (in greater generality) in \cite{Rohleder} using a unique continuation principle. 

\subsection{A variational formula for the gaps}

\begin{lem}\label{lem:formula for dn}
 Let  $\Omega\subset \R^d$ be a bounded Lipschitz domain. Then
\begin{equation}\label{formula for dn}
d_n(\sigma):=\lambda_n^\sigma-\lambda_n^0 =  \int_0^\sigma \left( \int_{\partial \Omega} |u_{n,\tau}|^2 ds  \right) d\tau
\end{equation}
where $u_{n,\tau}$ is any $L^2(\Omega)$-normalized eigenfunction associated with $\lambda_n^\tau$.
\end{lem}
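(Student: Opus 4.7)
The plan is to apply first-order eigenvalue perturbation theory to the monotone family of quadratic forms
\[
Q_\sigma(u) = \int_\Omega |\nabla u|^2\,dx + \sigma\int_{\partial \Omega} u^2\,ds, \qquad u\in H^1(\Omega),
\]
all sharing the same form domain $H^1(\Omega)$. By the boundary trace inequality, the perturbation $\sigma\mapsto \sigma\int_{\partial\Omega} u^2\,ds$ is relatively form-bounded (in fact bounded on $H^1(\Omega)$), so $Q_\sigma$ is a real-analytic family of type (B) in the sense of Kato. Consequently the eigenvalues can be labeled as real-analytic functions of $\sigma$ (possibly crossing), and away from a discrete set of crossing points there exists a real-analytic choice of $L^2(\Omega)$-normalized eigenfunction $u_{n,\sigma}$.

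The key step is the Hellmann-Feynman formula. Differentiating the identity
\[
\lambda_n^\sigma = \int_\Omega |\nabla u_{n,\sigma}|^2\,dx + \sigma \int_{\partial\Omega} |u_{n,\sigma}|^2\,ds
\]
in $\sigma$ and using the constraint $\int_\Omega |u_{n,\sigma}|^2 dx = 1$ together with self-adjointness (so that the terms coming from $\partial_\sigma u_{n,\sigma}$ cancel via the Robin eigenvalue equation and integration by parts), one obtains
\[
\frac{d \lambda_n^\sigma}{d\sigma} = \int_{\partial\Omega} |u_{n,\sigma}|^2\,ds
\]
at every point where the analytic branch is well defined. Since $\lambda_n^\sigma$ is absolutely continuous in $\sigma$ and the right-hand side extends continuously across crossing points (the value is independent of the choice of normalized eigenfunction within a fixed eigenspace, by symmetry of the trace quadratic form on that finite-dimensional space), integrating from $0$ to $\sigma$ yields
\[
d_n(\sigma) = \lambda_n^\sigma - \lambda_n^0 = \int_0^\sigma \int_{\partial\Omega} |u_{n,\tau}|^2\,ds\,d\tau.
\]

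The one point requiring care is justifying the formula at multiplicity crossings. The cleanest workaround, avoiding any analytic selection, is a direct variational estimate: for $\sigma_1<\sigma_2$ and any $L^2$-normalized eigenfunction $u$ for $\lambda_n^{\sigma_1}$, the min-max principle gives $\lambda_n^{\sigma_2}-\lambda_n^{\sigma_1} \le (\sigma_2-\sigma_1)\int_{\partial\Omega}|u|^2 ds$, and symmetrically with the roles reversed, which forces $\lambda_n^\sigma$ to be Lipschitz and its derivative, wherever it exists, to equal the boundary $L^2$-norm squared of any associated normalized eigenfunction. The main (minor) obstacle is merely to verify this identity uniformly across the measure-zero set of crossing values of $\sigma$; once that is done, the fundamental theorem of calculus finishes the proof.
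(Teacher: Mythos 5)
Your main line of attack---viewing $Q_\sigma$ as a real-analytic family of forms of type (B), taking analytic branches, applying Hellmann--Feynman along each branch, and then integrating the (a.e.) derivative of the absolutely continuous function $\sigma\mapsto\lambda_n^\sigma$---is in substance the same argument as the paper's, which simply outsources all of these facts (strict monotonicity, piecewise analyticity, absolute continuity, and the derivative formula) to a quoted lemma of Antunes--Freitas--Kennedy and then notes that the locally finite set of bad parameters can be ignored because the derivative is integrable. So the skeleton is fine. The problems are in the two places where you yourself flag that care is needed. First, your ``cleanest workaround'' is incorrect for $n\ge 1$: the min-max principle forces you to test $\lambda_n^{\sigma_2}$ against an $n$-dimensional subspace, e.g.\ the span $M$ of the first eigenfunctions at $\sigma_1$, and what comes out is
\[
\lambda_n^{\sigma_2}-\lambda_n^{\sigma_1}\;\le\;(\sigma_2-\sigma_1)\,\max\Big\{\textstyle\int_{\partial\Omega}w^2\,ds:\ w\in M,\ \|w\|_{L^2(\Omega)}=1\Big\},
\]
not $(\sigma_2-\sigma_1)\int_{\partial\Omega}|u_{n,\sigma_1}|^2\,ds$; lower modes in $M$ may well have larger boundary traces than $u_{n,\sigma_1}$, so the claimed one-eigenfunction Lipschitz bound (and hence the identification of the derivative by this route) only holds for the bottom eigenvalue $n=0$. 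This estimate does give local Lipschitz continuity, but it does not ``force the derivative, wherever it exists, to equal the boundary $L^2$-norm of any associated normalized eigenfunction.''

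Second, the assertion that $\int_{\partial\Omega}u^2\,ds$ is independent of the choice of normalized eigenfunction in a degenerate eigenspace ``by symmetry of the trace quadratic form'' is not an argument, and at a genuine crossing of two distinct analytic branches it is false: the possible values fill the interval between the two eigenvalues of the trace form restricted to the eigenspace. What rescues the lemma is that such crossings are locally finite, so they are negligible in the $\tau$-integral (this is exactly how the paper dismisses them); for degeneracies that persist on an interval (e.g.\ symmetry-induced ones) one must argue, via first-order degenerate perturbation theory, that coincidence of the branch derivatives forces the restricted trace form to be a scalar multiple of the identity, whence independence of the choice of eigenfunction. As written, your proposal leaves precisely this step unproved, and the substitute argument you offer in its place does not work; repairing it either means carrying out the degenerate perturbation computation or, as the paper does, quoting the AFK lemma and discarding the exceptional parameter set outright.
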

\begin{proof}
According to \cite[Lemma 2.11]{AFK} (who attribute it as folklore), for any bounded Lipschitz domain $\Omega\subset \R^d$, and $n\geq 1$, the function $\sigma\to \lambda_n^\sigma$ is strictly increasing for $\sigma\in [0,\infty)$,
is differentiable almost everywhere in $(0,\infty)$, is piecewise analytic, and the non-smooth points are locally finite (i.e. finite in each bounded interval). It is absolutely continuous, and in particular its derivative $d\lambda_n^\sigma/d\sigma$
(which exists almost everywhere) is locally integrable, and  for any $0\leq \alpha<\beta$,
\[
\lambda_n^\beta - \lambda_n^\alpha = \int_\alpha^\beta \frac{d\lambda_n^\sigma}{d\sigma}d\sigma .
\]
Moreover, there is a variational formula valid at any point where the derivative exists:
\begin{equation}\label{Var formula}
\frac{d \lambda_{n}^\sigma}{d\sigma} = \int_{\partial \Omega} |u_{n,\sigma}|^2 ds  %=  \tens[u_{n,\sigma}]
\end{equation}
where $u_{n,\sigma}$ is any normalized eigenfunction associated with $\lambda_n^\sigma$.
Therefore
\begin{equation*}
d_n(\sigma)=\lambda_n^\sigma-\lambda_n^0 = \int_0^\sigma \frac{d \lambda_{n}^\tau}{d\tau}  d\tau = \int_0^\sigma \left( \int_{\partial \Omega} |u_{n,\tau}|^2 ds  \right) d\tau .
\end{equation*}
We can ignore the finitely many points $\tau$ where \eqref{Var formula} fails, as the derivative is integrable.
\end{proof}

\subsection{A general upper bound: Proof of Theorem~\ref{general smooth upper bd} }
 As a corollary, we can show that   for the case of smooth boundary, we have an upper bound\footnote{Here and in the sequel we use $A\ll B$ as an alternative to $A=O(B)$.}
  \begin{equation*}
 d_n(\sigma)\ll_{\Omega,\sigma} (\lambda_n^\infty)^{1/3}  .
 \end{equation*}
 Indeed, for the case of smooth boundary, \cite[Proposition 2.4]{BHT}\footnote{Their Proposition 2.4 is stated only
 for the Neumann case,
 but as is pointed out in Remark 2.7, the proof applies to Robin case as well, uniformly in $\sigma\geq 0$; and they attribute it to Tataru \cite[Theorem 3]{Tataru}. Note that their convention for the normal derivative   is different than ours.}
 give an upper bound on the boundary integrals of eigenfunctions
 %\marginpar{can we remove smoothness?\textcolor{blue}{leave it for others}}
 $$\int_{\partial \Omega} u_{n,\sigma}^2 ds\ll_{\Omega } (\lambda_n^\sigma)^{1/3} \leq (\lambda_n^\infty)^{1/3},
 $$
 uniformly in $\sigma\geq 0$.
 %\marginpar{\textcolor{red}{Need some weak dependence on $\sigma$}}
 As a consequence of the variational formula \eqref{formula for dn}, we deduce
 \[
 d_n(\sigma)\ll_{\Omega } (\lambda_n^\infty)^{1/3} \cdot \sigma % \ll n^{1/3} \cdot \sigma
 \]
 and in particular for planar domains, using Weyl's law,  we obtain for $n\geq 1$
\[
 d_n(\sigma)\ll_{\Omega } n^{1/3}\cdot \sigma  .
 \]

\section{The mean value}\label{sec:mean value}
In this section we give a proof of Theorem~\ref{thm:mean of E RN gaps}, that
\[
 \lim_{N\to \infty}  \frac 1N\sum_{n\leq N}   d_n(\sigma)  %\int_{\partial \Omega} u_{n,\sigma}^2 ds
 = \frac{2\length(\partial \Omega)}{\area (\Omega)}  \sigma .
\]

Denote
\[
W_N(\sigma):=  \frac 1N\sum_{n\leq N}   \int_{\partial \Omega} u_{n,\sigma}^2 ds .
\]
Using Lemma~\ref{lem:formula for dn} gives
\[
 \frac 1N \sum_{n=1}^N d_n(\sigma) = \int_0^\sigma \left(  \frac 1N\sum_{n\leq N}   \int_{\partial \Omega} u_{n,\tau}^2 ds\right) d\tau =  \int_0^\sigma W_N(\tau) d\tau .
\]
 The local Weyl law \cite{HZ} (valid for any piecewise smooth $\Omega$) shows that for any fixed $\sigma$,
 \[
 \lim_{N\to \infty} W_N(\sigma)  = \frac{2\length(\partial \Omega)}{\area (\Omega)}
 \]
  so if we know that $W_N(\tau)\leq C$ is uniformly bounded for all $\tau\leq \sigma$,
%it is dominated by an integrable function $0\leq S_N(\tau)\leq S_\infty(\tau)\in L^1(0,\sigma)$
then by the Dominated Convergence Theorem we  deduce that
 \begin{multline*}
 \lim_{N\to \infty}  \frac 1N \sum_{n=1}^N d_n(\sigma)  =  \int_0^\sigma \lim_{N\to \infty} W_N(\tau)  d\tau
  \\
   = \int_0^\sigma  \frac{2\length(\partial \Omega)}{\area (\Omega)}  d\tau =  \frac{2\length(\partial \Omega)}{\area (\Omega)}  \cdot \sigma
 \end{multline*}
 as claimed.

It remains to prove a %locally
uniform upper bound for $W_N(\sigma)$.
 \begin{lem}\label{uniform local weyl law}
 There is a constant $C=C(\Omega)$ so that for all $\sigma>0$ and all $N\geq 1$,
 \[
 \frac 1N\sum_{n\leq N}   \int_{\partial \Omega} u_{n,\sigma}^2 ds  \leq C . %,\quad\forall \sigma\leq \sigma_0.
 \]
 \end{lem}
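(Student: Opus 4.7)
\textbf{Proof plan for Lemma~\ref{uniform local weyl law}.}

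My plan is to exploit concavity in $\sigma$ of the partial eigenvalue sum
\[
S_N(\sigma) := \sum_{n=1}^N \lambda_n^\sigma,
\]
and then compare the value at $\sigma$ with the value at $\sigma = 0$, where the classical local Weyl law gives the desired bound. The starting observation is that by the Ky Fan trace min-max principle applied to the Robin quadratic form
\[
Q^\sigma(\phi) = \int_\Omega |\nabla \phi|^2\,dx + \sigma \int_{\partial\Omega} \phi^2\,ds, \qquad \phi \in H^1(\Omega),
\]
one has
\[
S_N(\sigma) = \inf_{\{\phi_1,\dots,\phi_N\}} \sum_{j=1}^N Q^\sigma(\phi_j),
\]
the infimum being over all $L^2(\Omega)$-orthonormal systems $\phi_1,\dots,\phi_N \in H^1(\Omega)$. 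For each fixed such system the summand is \emph{affine} in $\sigma$, so $S_N(\sigma)$ is a concave function of $\sigma$ on $[0,\infty)$ as an infimum of affine functions.

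Next I would use the variational formula \eqref{Var formula} from the proof of Lemma~\ref{lem:formula for dn}: at every point $\sigma \geq 0$ where the individual derivatives $d\lambda_n^\sigma/d\sigma$ exist (which holds off a locally finite set), summing gives
\[
S_N'(\sigma) = \sum_{n\leq N} \int_{\partial\Omega} u_{n,\sigma}^2\,ds = N \cdot W_N(\sigma).
\]
Since $S_N$ is concave and absolutely continuous on $[0,\infty)$ (each $\lambda_n^\tau$ is, by the AFK result cited earlier), its derivative $S_N'$ is nonincreasing where it is defined. In particular, for almost every $\sigma > 0$,
\[
N \cdot W_N(\sigma) = S_N'(\sigma) \leq S_N'(0^+) = \sum_{n \leq N} \int_{\partial\Omega} u_{n,0}^2\,ds = N \cdot W_N(0).
\]
Because $\sigma \mapsto \int_{\partial\Omega} u_{n,\sigma}^2\,ds$ is continuous on the intervals of analyticity of $\lambda_n^\sigma$, the inequality $W_N(\sigma) \leq W_N(0)$ then extends to \emph{all} $\sigma \geq 0$ at which every $u_{n,\sigma}$ ($n \leq N$) is well-defined, which is all but finitely many points for each $N$; at the exceptional points the bound follows by taking a one-sided limit, or by averaging eigenfunctions over the eigenspace.

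Finally I would invoke the local Weyl law at the Neumann endpoint to bound $W_N(0)$. The Hassell--Zelditch result \cite{HZ} applies to any piecewise smooth $\Omega$ and gives
\[
\lim_{N\to\infty} W_N(0) = \frac{2\,\length(\partial\Omega)}{\area(\Omega)},
\]
so in particular $W_N(0)$ is bounded by some $C = C(\Omega)$ uniformly in $N$. Combining the two estimates yields $W_N(\sigma) \leq C$ for all $\sigma > 0$ and all $N \geq 1$, which is exactly the claim.

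The step I expect to be most delicate is justifying that the concavity-based monotonicity $S_N'(\sigma) \leq S_N'(0^+)$ really upgrades to a \emph{pointwise} bound on $W_N(\sigma)$ for every $\sigma$: one must handle the locally finite set of crossings where $\lambda_n^\sigma$ fails to be differentiable, and verify that the Ky Fan formulation applies to the unbounded self-adjoint Robin operator in the form sense. Both are standard but warrant explicit comment, probably citing \cite{AFK} again for the regularity of $\lambda_n^\sigma$.
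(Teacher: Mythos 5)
Your proposal is sound in its main thrust, but it is a genuinely different argument from the one in the paper. The paper does not compare $W_N(\sigma)$ with the Neumann endpoint at all: it bounds $\sum_{n\leq N}\int_{\partial\Omega}u_{n,\sigma}^2\,ds$ directly by $e\int_{\partial\Omega}K_\sigma\bigl(x,x;\tfrac1\Lambda\bigr)\,ds$ with $\Lambda=\lambda_N^\infty$, and then invokes the Gaussian upper bound for the Robin heat kernel, uniform in $\sigma\geq 0$ (\cite[Lemma 12.1]{HZ}), together with Weyl's law, to get $\ll_\Omega \Lambda^{\dim\Omega/2}\approx N$. That route needs no information at $\sigma=0$ and no regularity of $\sigma\mapsto\lambda_n^\sigma$ beyond Dirichlet--Neumann bracketing. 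Your route instead extracts a structural monotonicity statement: concavity of $S_N(\sigma)=\sum_{n\leq N}\lambda_n^\sigma$ (as an infimum of affine functions via the trace min--max principle) forces $S_N'$ to be nonincreasing, hence $W_N(\sigma)\leq W_N(0)$, and the Neumann case is then controlled by the local Weyl law of \cite{HZ}. This is a nice and arguably stronger conclusion (monotone comparison with $\sigma=0$, not just uniform boundedness), it works in any dimension, and it uses only tools the paper already relies on (\eqref{Var formula}, \cite{AFK}, \cite{HZ}).

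Two points in your write-up need tightening, though neither is fatal. First, $S_N'(0^+)=N\,W_N(0)$ should be an inequality: if $N$ cuts a Neumann cluster, the right derivative of the partial sum is the sum of the \emph{smallest} eigenvalues of the boundary form compressed to the eigenspace, so $S_N'(0^+)\leq N\,W_N(0)$ for any admissible choice of orthonormal eigenfunctions --- which is the direction you need, so no harm, but the equality as written is not justified. Second, the extension to every $\sigma>0$ is glossed: ``averaging over the eigenspace'' does not bound an \emph{arbitrary} choice of eigenfunctions at a splitting point, and at such a point one should instead enlarge the sum to the full cluster containing $\lambda_N^\sigma$ (the complete trace over an eigenspace is basis-independent and is controlled by a one-sided derivative of a longer partial sum, with the index inflation controlled by Weyl's law); also note that eigenfunctions need not be ``well-defined up to sign'' off a finite set --- persistent multiplicities can occur on whole intervals (e.g.\ the square), though there the compressed boundary form is scalar and \eqref{Var formula} still holds for any eigenfunction, consistently with \cite{AFK}. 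Alternatively, since the lemma is only used under the integral $\int_0^\sigma W_N(\tau)\,d\tau$, an almost-everywhere bound already suffices for the paper's purposes, but the lemma as stated claims all $\sigma>0$, so one of these fixes should be made explicit.
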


 \begin{proof}
 What we use is an upper bound on the heat kernel  {\em on the boundary}. Let $K_\sigma(x,y;t)$ be the heat kernel for the Robin problem. Then  \cite[Lemma 12.1]{HZ},
 \begin{equation}\label{upper bd for Robin kernel}
 K_\sigma(x,y;t)\leq C t^{-\dim \Omega/2}\exp(-\delta| x-y|^2/t)
 \end{equation}
  where $C,\delta>0$ depend only on the domain $\Omega$. Moreover, on the regular part of the boundary,
 \[
 K_\sigma(x,y;t) = \sum_{n\geq 0}e^{-t\lambda_n^\sigma} u_{n,\sigma}(x) u_{n,\sigma}(y) .
 \]
 %We have a bound for the Neumann kernel  \cite[Lemma]{Minakshisundaram}, \cite[Theorem 3.2.9]{Davies}, \cite[Proposition 2.1]{Brown} for any bounded Lipschitz domain
 %\marginpar{Is this valid on the boundary?}
 %\begin{equation}\label{upper bd for Neuman kernel}
% K_0(x,y;t)\leq C t^{-\dim \Omega/2}\exp(-\delta| x-y|/t)
% \end{equation}
% where $C,\delta>0$ depend only on the domain $\Omega$.

We   have for $n\leq N$ that $\lambda_n^\sigma\leq \lambda_N^\sigma\leq \lambda_N^\infty$, so  for $\Lambda=\lambda_N^\infty$,
\[
\sum_{n\leq N} \int_{\partial \Omega} u_{n,\sigma}^2 ds \leq e \sum_{n\leq N} e^{-\lambda_n^\sigma /\Lambda}\int_{\partial \Omega} u_{n,\sigma}^2 ds  \leq e \int_{\partial \Omega} K_\sigma\left(x,x;\frac 1{\Lambda}\right) ds .
\]
%By  monotonicity of the heat kernel $K_\sigma\leq K_0$, for $\sigma\geq 0$  (due to ``domination theory''), this is bounded by the corresponding integral of the Neumann kernel, giving a bound
By \eqref{upper bd for Robin kernel},
\[
\int_{\partial \Omega} K_\sigma\left(x,x;\frac 1{\Lambda}\right) ds %\leq \int_{\partial \Omega} K_0\left(x,x;\frac 1{\Lambda}\right) ds
 \ll_\Omega \Lambda^{\dim \Omega/2} .
\]
Thus we find a uniform upper bound
\[
\sum_{\lambda_n^\sigma\leq \Lambda} \int_{\partial \Omega} u_{n,\sigma}^2 ds \ll_\Omega \Lambda^{\dim\Omega/2} \approx N
\]
on using Weyl's law, that is for all $\sigma>0$
\[
\frac 1N \sum_{n\leq N} \int_{\partial \Omega} u_{n,\sigma}^2 ds \leq C(\Omega) .
\]
\end{proof}

We note that the mean value result is valid in any dimension $d\geq 2$ for piecewise smooth domains $\Omega\subset\R^d$ as in \cite{HZ}, in the form  
\[
 \lim_{N\to \infty}  \frac 1N\sum_{n\leq N}   d_n(\sigma)  %\int_{\partial \Omega} u_{n,\sigma}^2 ds
 = \frac{2\Vol_{d-1}(\partial \Omega)}{\Vol_d (\Omega)}  \sigma .
\]
Indeed \cite{HZ}  prove the local Weyl law in that context, and Lemma~\ref{uniform local weyl law} is also valid in any dimension.

\section{A uniform lower bound for the gaps}
To obtain the lower bound of Theorem~\ref{thm:RN lower bound} for the gaps, we use the variational formula \eqref{formula for dn} to relate the derivative $d\lambda_n^\sigma/d\sigma$ to the boundary integrals $\int_{\partial \Omega}u_{n,\sigma}^2 ds$,
where $u_{n,\sigma}$ is any eigenfunction with eigenvalue $\lambda_n^\sigma$, and for that will require a lower bound on these boundary integrals.

\subsection{A lower bound for the boundary integral.}
The goal here is to prove a uniform  lower bound for the boundary data of Robin eigenfunctions on a star-shaped, %piecewise
smooth  planar domain $\Omega$.
%\marginpar{Can we remove the star-shaped condition? smoothness? \textcolor{red}{I gave up on it}}
%\begin{thm}\label{thm:lower bound for tension}
%Let $\Omega\subset \R^2$ be a  star-shaped  bounded planar domain with %piecewise
%smooth boundary.
%Then there exists a positive function $\sigma\mapsto c_{\Omega}(\sigma)>0$, defined for all $\sigma\geq 0$,
%\textcolor{red}{continuous on $\sigma>0$,}
%\marginpar{think}
%  so that for all $L^2(\Omega)$-normalized $\sigma$-Robin eigenfunctions,
%\[
%\int_{\partial \Omega}  f^2 \geq c_{\Omega}(\sigma).
%\]
%\end{thm}
\begin{thm}\label{thm:lower bound for tension}
Let $\Omega\subset \R^2$ be a  star-shaped  bounded planar domain with %piecewise 
smooth boundary. Let $f $ be an $L^2(\Omega)$ normalized Robin eigenfunction associated with the $n$-th 
eigenvalue $\lambda_n^\sigma$.
Then   there are constants $C>0$, $A,B\geq 0$ depending on $\Omega$ so that for all $n\geq 1$, 
\begin{equation}\label{explicit lower bound on c for n=1}
 \int_{\partial \Omega}  f^2   ds \geq  \frac{1}{A\sigma^2 +B\sigma +C}>0.
\end{equation}
\end{thm}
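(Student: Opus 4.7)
The plan is to exploit a Rellich-Pohozaev identity with the dilation field $X=x$, anchoring the origin at a point with respect to which $\Omega$ is star-shaped so that $(x\cdot n)\geq 0$ on $\partial\Omega$ and $(x\cdot n)\leq R:=\max_{\partial\Omega}|x|$. Computing $\int_\Omega 2(x\cdot\nabla f)(-\Delta f)\,dx$ first via $-\Delta f=\lambda f$ together with the divergence theorem (with $\nabla\cdot x = 2$), and second via integration by parts, yields for $d=2$ the identity
\[
\int_{\partial\Omega}(x\cdot n)|\nabla f|^2\,ds \;-\; 2\int_{\partial\Omega}(x\cdot\nabla f)(\partial_n f)\,ds \;=\; \lambda\int_{\partial\Omega}(x\cdot n)f^2\,ds \;-\; 2\lambda\int_\Omega f^2\,dx.
\]
Elliptic regularity on the smooth domain $\Omega$ ensures $f$ is smooth up to $\partial\Omega$, so these boundary traces are classical.

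Next I would decompose the gradient at the boundary into normal and tangential parts, $\nabla f = (\partial_n f)\,n + (\partial_T f)\,t$, plug in the Robin condition $\partial_n f = -\sigma f$ (so that $(\partial_n f)^2 = \sigma^2 f^2$ and $(\partial_T f)(\partial_n f)=-\tfrac{\sigma}{2}\partial_T(f^2)$), and integrate by parts once along the closed curve $\partial\Omega$. Using the Frenet relation $\partial_T(x\cdot t) = 1 + \kappa(x\cdot n)$ for the signed curvature $\kappa$, the mixed contribution collapses to an integral of $f^2$ against a function bounded pointwise by $1+R\|\kappa\|_\infty$. After this bookkeeping, all tangential derivatives of $f$ collect into the single term $\int_{\partial\Omega}(x\cdot n)(\partial_T f)^2\,ds$, which is $\geq 0$ by star-shapedness and can be dropped from the left-hand side.

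With $\int_\Omega f^2\,dx = 1$, these manipulations reduce the identity to the inequality
\[
2\lambda \;\leq\; \bigl[(\lambda+\sigma^2)R \;+\; \sigma(1+R\|\kappa\|_\infty)\bigr]\int_{\partial\Omega}f^2\,ds.
\]
For $n\geq 1$ one has $\lambda=\lambda_n^\sigma \geq \lambda_n^0\geq \lambda_1^0 =:\mu_1>0$, and since $\lambda\mapsto 2\lambda/(R\lambda+A)$ is monotone increasing in $\lambda$, the worst case occurs at $\lambda=\mu_1$, producing
\[
\int_{\partial\Omega}f^2\,ds \;\geq\; \frac{2\mu_1}{R\mu_1 + R\sigma^2 + \sigma(1+R\|\kappa\|_\infty)} \;=\; \frac{1}{A\sigma^2+B\sigma+C},
\]
with $A=R/(2\mu_1)$, $B=(1+R\|\kappa\|_\infty)/(2\mu_1)$, $C=R/2$ --- precisely the claimed form, with constants depending only on $\Omega$.

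The main obstacle is the boundary integration-by-parts bookkeeping: one must carefully track signs through the normal/tangential decomposition so that, after using the Robin condition and integrating by parts on $\partial\Omega$, the only surviving tangential-derivative term is $\int_{\partial\Omega}(x\cdot n)(\partial_T f)^2\,ds$. Star-shapedness enters the argument at exactly this point, ensuring that this residual term has the right sign to be discarded; this is why the argument in its present form does not extend beyond star-shaped planar domains.
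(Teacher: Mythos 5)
Your proposal is correct and follows essentially the same route as the paper: the Rellich identity for the dilation field, the Robin condition to eliminate normal derivatives, a tangential integration by parts over the closed boundary curve (your $\partial_T(x\cdot t)$ is exactly the paper's $q'$ with $q=x\cdot t$), and star-shapedness to discard a nonnegative term, followed by $\lambda_n^\sigma\geq\lambda_1^0>0$. The only difference is bookkeeping: you decompose $\nabla f$ on the boundary and drop only $\int_{\partial\Omega}(x\cdot n)(\partial_T f)^2\,ds$, while the paper drops the whole $\int_{\partial\Omega}(x\cdot n)|\nabla f|^2\,ds$ term and bounds the cross term $\int_{\partial\Omega}\frac{\partial f}{\partial n}Af\,ds$ in a separate lemma, yielding constants of the same quadratic-in-$\sigma$ form.
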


For $\sigma=0$ (Neumann problem), this is related
%\footnote{They seem to require that $\partial \Omega$ is smooth, and do not assume that the domain is star-shaped. Moreover, they do not exactly deal with the actual boundary data but a variation \fbox{\textcolor{red}{which I don't understand.}}}
to the $L^2$ restriction bound of Barnett-Hassell-Tacy \cite[Proposition 6.1]{BHT}.

\subsection{The Neumann case $\sigma=0$.}
We first show the corresponding statement for Neumann eigenfunctions (which are Robin case with $\sigma=0$), which is much simpler.
%, patterned after the classical Dirichlet case \cite{Liu}.
Let $f$ be a Neumann eigenfunction, that is $(\Delta+\lambda)f=0$ in $\Omega$, $\frac{\partial f}{\partial n}=0$ in $\partial \Omega$. We  may assume that $\lambda>0$, the result being obvious for $\lambda=0$ when $f$ is a constant function.  After translation, we may assume    that the domain is star-shaped with respect to the origin.

We start with a Rellich identity (\cite[Eq 2]{Rellich}): Assume that $\Omega\subset \R^d$ is a Lipschitz domain. Let $L=\Delta+\lambda$, and $A=\sum_{j=1}^d x_j\frac{\partial}{\partial x_j}$.
For every function $f$ on $\Omega$
\begin{multline}\label{Rellich eq 2}
\int_\Omega (Lf)(Af) dx  = \int_{\partial \Omega} \frac{\partial f}{\partial n}Af -\frac 12 \int_{\partial \Omega}||\nabla f||^2 \left(\sum_{j=1}^d x_j\frac{\partial x_j}{\partial n} \right)
\\
 + \frac{\lambda}{2} \int_{\partial \Omega} f^2 \left( \sum_{j=1}^d x_j\frac{\partial x_j}{\partial n}  \right)
-\frac{d}{2}\lambda \int_{\Omega} f^2 dx  + \left(\frac d2-1\right)\int_\Omega ||\nabla f||^2 dx .
\end{multline}

Using \eqref{Rellich eq 2} in dimension $d=2$ for a normalized eigenfunction, so that $Lf=0$ and $\int_\Omega f^2 =1$, and recalling that for Neumann eigenfunctions $ \frac{\partial f}{\partial n}=0$ on $\partial \Omega$, gives
\[
0=-\frac 12 \int_{\partial \Omega}||\nabla f||^2 \left(x\frac{\partial x}{\partial n}  + y\frac{\partial y}{\partial n} \right) +
\frac{\lambda}{2}   \int_{\partial \Omega} f^2 \left(x\frac{\partial x}{\partial n}  + y\frac{\partial y}{\partial n} \right)
-\lambda
\]
or
\[
\int_{\partial \Omega} \left(f^2-\frac 1{\lambda}||\nabla f||^2 \right)  \left(x\frac{\partial x}{\partial n}  + y\frac{\partial y}{\partial n} \right) ds
= 2 .
\]

 The term $x\frac{\partial x}{\partial n}  + y\frac{\partial y}{\partial n}$ is the inner product $n(\vec x) \cdot \vec x$ between the outward unit normal  $n(\vec x) = (\frac{\partial x}{\partial n} , \frac{\partial y}{\partial n})$ at the point $\vec x\in \partial \Omega$ and the radius vector $\vec x=(x,y)$ joining $\vec x$ and the origin.
Since the domain is star-shaped w.r.t. the origin, we have on the boundary $\partial \Omega$  %(see Figure~\ref{starshaped.fig}),
\[
x\frac{\partial x}{\partial n}  + y\frac{\partial y}{\partial n} = n(\vec x) \cdot \vec x \geq 0
\]
so that we can drop\footnote{If we also allow negative Robin constant $\sigma<0$, we may have a finite number of negative eigenvalues and this part of the argument would not work  for these.} the term with $||\nabla f||^2$ and get an inequality
\[
\int_{\partial \Omega}   (n(\vec x) \cdot \vec x )  f^2 ds \geq 2.
\]
Replacing $ (n(\vec x) \cdot \vec x ) \leq 2C_\Omega$  on $\partial \Omega$ gives Theorem~\ref{thm:lower bound for tension} for
$\sigma=0$:
%gives the uniform lower bound
\[
\int_{\partial \Omega}  f^2 \geq \frac{1}{C_\Omega} .
\]
%\begin{figure}[ht]
%\begin{center}
%\includegraphics[height=60mm]{starshaped.png}
%\caption{A non star-shaped domain. At the point $x$, the outward normal makes an obtuse angle with the radius vector $\vec x$ from the origin, hence $n(\vec x)\cdot \vec x<0$. At the point $y$, we have $\vec y\cdot n(\vec y)>0$.}
%\label{starshaped.fig}
%\end{center}
%\end{figure}

%Note: We have a commutator identity $[L,A]=2\Delta$. Is this used here?

\subsection{The Robin case}
%Now we want to discuss Robin eigenfunctions:

Using the Rellich identity  \eqref{Rellich eq 2} in dimension $d=2$ for a normalized eigenfunction, so that $Lf=0$ and $\int_\Omega f^2 =1$, gives
\[
0=  \int_{\partial \Omega} \frac{\partial f}{\partial n}Af -\frac 12 \int_{\partial \Omega}||\nabla f||^2 (n(\vec x) \cdot \vec x )
+ \frac{\lambda}{2} \int_{\partial \Omega} f^2 (n(\vec x) \cdot \vec x )
-\lambda .
\]

% The term $x\frac{\partial x}{\partial n}  + y\frac{\partial y}{\partial n}$ is the inner product $n(\vec x) \cdot \vec x$ between the outward unit normal  $n(\vec x) = (\frac{\partial x}{\partial n} , \frac{\partial y}{\partial n})$ at the point $\vec x\in \partial \Omega$ and the radius vector $\vec x=(x,y)$ joining $\vec x$ and the origin.
%Since the domain is star-shaped w.r.t. the origin, we have\footnote{This is geometrically obvious, and in fact is probably equivalent to star-shaped w.r.t. the origin, but I didn't find a good reference.}
%\[
%x\frac{\partial x}{\partial n}  + y\frac{\partial y}{\partial n} = n(\vec x) \cdot \vec x \geq 0
%\]
% on the boundary $\partial \Omega$ (see Figure~\ref{starshaped.fig}), so that we can drop the term with $||\nabla f||^2$ and get an inequality
 Now   $n(\vec x) \cdot \vec x\geq 0$ on the boundary $\partial \Omega$ since
$\Omega$ is star-shaped with respect to the origin, and $\lambda>0$, so we may drop the term with $||\nabla f||^2$ and get an inequality
\[
\int_{\partial \Omega}  f^2(\vec x)   (n(\vec x) \cdot \vec x) ds + \frac{2}{\lambda}\int_{\partial \Omega} \frac{\partial f}{\partial n}Af \geq 2 .
\]

Due to the boundary condition, we may replace the normal derivative $\frac{\partial f}{\partial n}$ by $-\sigma f$,
%to transform
%\[
%\int_{\partial \Omega} \frac{\partial f}{\partial n}Af  =- \sigma \int_{\partial \Omega} f (Af) ds
%\]
and  obtain, after using $0 \leq  n(\vec x) \cdot \vec x \leq 2C=2C_\Omega$ (we may take $2C$ to be the diameter of $\Omega$), that
\begin{equation}\label{rob rellich intermediate}
 C \int_{\partial \Omega}  f^2   - \frac{ \sigma }{\lambda}\int_{\partial \Omega} f (Af )\geq 1 .
\end{equation}
To proceed further, we need:
\begin{lem}
Assume that $\partial\Omega$ is smooth. There are  numbers $P,Q\geq 0$,  not both zero, depending only on $\partial \Omega$, so that for any normalized $\sigma$-Robin
eigenfunction $f$,
\begin{equation}\label{ineq for fAf}
\left| \int_{\partial \Omega} f (Af) ds \right| \leq  (P+\sigma Q) \int_{\partial \Omega} f^2ds  .
\end{equation}
\end{lem}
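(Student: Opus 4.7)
The plan is to decompose the radial vector field $A=x\partial_x+y\partial_y$ on $\partial\Omega$ into its normal and tangential components. Writing, at each boundary point $\vec{x}$, the position vector as $\vec{x}=(\vec{x}\cdot n)\,n+(\vec{x}\cdot T)\,T$ for the outward unit normal $n$ and a (smoothly chosen) unit tangent $T$, one has
\[
Af=(\vec{x}\cdot n)\,\partial_n f+(\vec{x}\cdot T)\,\partial_T f \qquad\text{on }\partial\Omega.
\]
Using the Robin condition $\partial_n f=-\sigma f$ in the normal part, and the identity $f\,\partial_T f=\tfrac12\partial_T(f^2)$ in the tangential part, multiplying by $f$ and integrating gives
\[
\int_{\partial\Omega}f(Af)\,ds=-\sigma\int_{\partial\Omega}(\vec{x}\cdot n)\,f^2\,ds+\tfrac12\int_{\partial\Omega}(\vec{x}\cdot T)\,\partial_T(f^2)\,ds.
\]

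The first (normal) term is immediately bounded in absolute value by $\sigma Q\int_{\partial\Omega}f^2\,ds$ with $Q:=\max_{\partial\Omega}|\vec{x}\cdot n|$, which is finite and independent of $\sigma$ and $n$ (it is at most the diameter of $\Omega$). For the second (tangential) term, the smoothness hypothesis on $\partial\Omega$ ensures that each connected component of $\partial\Omega$ is a smooth closed curve. Parametrizing such a component by arc length $\gamma(s)$, one has $\vec{x}\cdot T=\gamma(s)\cdot\gamma'(s)$, which is a smooth periodic function of $s$, and integration by parts along the closed curve produces no boundary contribution:
\[
\tfrac12\int_{\partial\Omega}(\vec{x}\cdot T)\,\partial_T(f^2)\,ds=-\tfrac12\int_{\partial\Omega}\partial_T(\vec{x}\cdot T)\,f^2\,ds.
\]
Since $\partial_T(\vec{x}\cdot T)=|\gamma'(s)|^2+\gamma(s)\cdot\gamma''(s)=1+\gamma\cdot\gamma''$ is uniformly bounded on $\partial\Omega$ by smoothness, this term is controlled by $P\int_{\partial\Omega}f^2\,ds$ with $P:=\tfrac12\max_{\partial\Omega}|1+\gamma\cdot\gamma''|$. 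Adding the two contributions yields the claimed inequality \eqref{ineq for fAf} with constants depending only on $\partial\Omega$.

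The only place where a real hypothesis is needed is the integration by parts, and it is exactly there that the smoothness of $\partial\Omega$ is crucial: it guarantees both that $\partial_T(\vec{x}\cdot T)$ is bounded and, more importantly, that $\partial\Omega$ is a disjoint union of closed curves so that no boundary terms appear. The decomposition $A=(\vec{x}\cdot n)\partial_n+(\vec{x}\cdot T)\partial_T$ is the key structural observation allowing us to trade the radial-derivative in $A$ for $-\sigma f$ on one side and a tangential derivative on the other; this is why the estimate grows only linearly in $\sigma$ rather than, say, quadratically.
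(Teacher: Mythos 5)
Your proof is correct and follows essentially the same route as the paper: decompose $A$ on $\partial\Omega$ into normal and tangential parts, replace $\partial_n f$ by $-\sigma f$ via the Robin condition, and integrate the tangential term by parts along the closed boundary curve so that only the bounded derivative of the tangential coefficient remains. The only difference is cosmetic: you make the coefficients explicit as $\vec{x}\cdot n$ and $\vec{x}\cdot T$ and compute $\partial_T(\vec{x}\cdot T)=1+\gamma\cdot\gamma''$, whereas the paper keeps them as abstract functions $p,q$ and bounds $|dq/d\tau|$.
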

\begin{proof}
Decompose the vector field $A=x\frac{\partial }{\partial x} + y\frac{\partial }{\partial y}$ into its normal and tangential components along the boundary:
\[
A =p \frac{\partial}{\partial n} + q \frac{\partial}{\partial \tau}
\]
where $p,q$ are functions on the boundary $\Omega$. For example, for the circle $x^2+y^2=\rho^2$, we have $A=\rho \frac{\partial}{\partial n}$ and the normal derivative is just the radial derivative $ \frac{\partial}{\partial n}= \frac{\partial}{\partial r}$,
so that  $p \equiv \rho$, and $q  \equiv 0$.

Then using the Robin condition $\frac{\partial f}{\partial n} =-\sigma f$ on $\partial \Omega$ gives
\[
\int_{\partial \Omega} f (Af) ds  = \int_{\partial \Omega} f \left(p \frac{\partial f}{\partial n} +q \frac{\partial f}{\partial \tau}\right) ds
= -\sigma \int_{\partial \Omega}p f^2 ds+ \int_{\partial \Omega} q f  \frac{\partial f}{\partial \tau}  ds .
\]
Setting $P:=\max_{\partial \Omega}|p|$, we have
\[
 \left|-\sigma \int_{\partial \Omega}p f^2 ds\right|\leq \sigma P \int_{\partial \Omega} f^2ds
\]
so it remains to bound $\left| \int_{\partial \Omega} q f  \frac{\partial f}{\partial \tau}  ds\right|$.

Let $\gamma:[0,L]\to \partial \Omega$ be an arclength parameterization  with $\gamma(0) = \gamma(L)$.
%Assume first that the boundary  $\partial \Omega$ admits a global arc-length parameterization $\gamma:[0,L]\to \partial \Omega$, with $\gamma(0) = \gamma(L)$.
Then note that the tangential derivative of $f$  at $x_0 = \gamma(s_0)$ is
\[
 \frac{\partial f}{\partial \tau}(x_0) = \frac{d}{ds}f(\gamma(s))\Big|_{s=s_0}
\]
and hence
\[
 f  \frac{\partial f}{\partial \tau} = \frac 12  \frac{\partial  (f^2) }{\partial \tau}=\frac 12  \frac{d  }{ds} \left\{f(\gamma(s))^2\right\}
\]
so that abbreviating $q(s) = q(\gamma(s))$ and integrating by parts
 \[
 \begin{split}
 \int_{\partial \Omega} q f  \frac{\partial f}{\partial \tau}  ds  &= \frac 12 \int_0^L  q(s) \frac{d  }{ds} \left\{f(\gamma(s))^2\right\} ds
 \\
 & =
 \frac 12 q(s)f(\gamma(s))^2\Big|_0^L - \frac 12 \int_0^L q'(s) f(\gamma(s))^2 ds .
 \end{split}
\]
Because the curve is closed: $\gamma(L)=\gamma(0)$, the boundary terms cancel out:
\[
q(s)f(\gamma(s))^2\Big|_0^L  = q(\gamma(L)) f(\gamma(L))^2-  q(\gamma(0)) f(\gamma(0))^2 =0
\]
and so
\[
\left|\int_{\partial \Omega} q f  \frac{\partial f}{\partial \tau}  ds\right|  = \left| \frac 12 \int_0^L q'(s) f(\gamma(s))^2 ds\right|
\leq Q \int_{\partial \Omega} f^2 ds
\]
where $Q=\max_{\partial \Omega}|\frac{dq}{d\tau}|$. Altogether we found that
\[
\left| \int_{\partial \Omega} f (Af) ds \right| \leq (\sigma P+Q) \int_{\partial \Omega} f^2 ds .
\]
%%  what to do for piecewise smooth boundary?
%In general, break the boundary $\partial \Omega$ into parts admitting an arc-length parametrization,   and argue as before on each part. The only thing to check is that now the individual boundary $q(s)f(\gamma(s))^2\Big|_0^L $ no longer vanish individually, but their sum along the boundary cancels out to give zero contribution.
\end{proof}
%We may now conclude the proof of Theorem~\ref{thm:lower bound for tension} for $\sigma>0$:
%Inserting \eqref{ineq for fAf} into \eqref{rob rellich intermediate}  we find
%\[
%2\leq C \int_{\partial \Omega}  f^2   - \frac{2\sigma }{\lambda}\int_{\partial \Omega} f (Af )
%\leq \left(C+\frac{2\sigma(P+Q\sigma)}{\lambda} \right) \int_{\partial \Omega} f^2 ds .
%\]
%Hence we find that for $\lambda=\lambda_n(\sigma)$ with $n\geq 0$, on replacing $\lambda\geq \lambda_{0}(\sigma)>0$, that %replacing $\lambda$ by $\lambda_0(\sigma)>0$, that
%\begin{equation*}%\label{explicit lower bound on c}
% \int_{\partial \Omega}  f^2   ds \geq c_{\Omega}(\sigma):=\frac{2}{C+2\sigma(P+Q\sigma)/\lambda_{0}(\sigma)}>0.
%\end{equation*}
%\qed
%
%\subsection{Proof of Theorem~\ref{thm:RN lower bound}}
%
%We use the variational formula \eqref{formula for dn}
%\[
%d_n(\sigma)  = \int_0^\sigma \Big( \int_{\partial \Omega} u_{n,\tau}^2 ds  \Big) d\tau
%\]
%with the lower bound of Theorem~\ref{thm:lower bound for tension}
%\[
% \int_{\partial \Omega} |u_{n,\tau}|^2 ds  \geq c_{\Omega}(\tau), \quad \forall \tau > 0.
%\]
%This allows us to deduce the bound
%\[
%d_n(\sigma)    \geq C(\Omega,\sigma):=\int\limits_{0}^{\sigma}c_{\Omega}(\tau)d\tau >0,
%\]
%by the asserted continuity and positivity of $\sigma \mapsto c_{\Omega}(\sigma)$.
%\qed

We may now conclude the proof of Theorem~\ref{thm:lower bound for tension} for $\sigma>0$:
Take $f=u_{n,\sigma}$ the $n$-th eigenfunction, with $n\geq 1$. 
Inserting \eqref{ineq for fAf} into \eqref{rob rellich intermediate}  we find 
\[
1\leq C \int_{\partial \Omega}  f^2   - \frac{ \sigma }{\lambda}\int_{\partial \Omega} f (Af )
\leq \left(C+\frac{ \sigma(P+Q\sigma)}{\lambda_n^\sigma} \right) \int_{\partial \Omega} f^2 ds .
\]
Hence we find, on replacing $\lambda_n^{\sigma}\geq \lambda_1^{\sigma} \geq \lambda_1^0>0$, that 
%replacing $\lambda$ by $\lambda_0(\sigma)>0$, that
\begin{equation*}%\label{explicit lower bound on c for n=1}
 \int_{\partial \Omega}  f^2   ds \geq  \frac{1}{C+ \sigma(P+Q\sigma)/ \lambda_1^0}>0 
\end{equation*}
which is of the desired form. 
%For $n=0$, we just use $\int_{\partial \Omega} u_{0,\sigma}^2 ds >0$ because otherwise, we would have $u_{0,\sigma}\equiv 0$ almost surely on $\partial\Omega$,   and together with the boundary condition $\frac{\partial u_{n,\sigma}}{dn}+\sigma u_{n,\tau} = 0$ we also obtain (since $\sigma\neq 0$) that $u_{n,\sigma} \equiv 0$ on $\partial \Omega$, and this forces $u_{n,\sigma} \equiv 0 $ on $\Omega$ by a unique continuation principle, see e.g. \cite[Proposition 2.5]{BehrndtRohleder}. 
\qed

\subsection{Proof of Theorem~\ref{thm:RN lower bound}}

We use the variational formula \eqref{formula for dn}    for $n\geq 1$ with the lower bound \eqref{explicit lower bound on c for n=1} of Theorem~\ref{thm:lower bound for tension}
\[
d_n(\sigma) = \int_0^\sigma \left(\oint_{\partial\Omega} u_{n,\tau}^2 ds \right)d\tau 
\geq   \int_0^\sigma \frac{d\tau}{A\tau^2+B\tau+C  } =:c_1(\Omega,\sigma) >0.
\]
 
For $n=0$, we just use  positivity  of the RN gap $d_0(\sigma)>0$, and finally deduce that for all $n\geq 0$, and $\sigma>0$, 
\[
d_n(\sigma)\geq c(\Omega,\sigma):=\min\left(c_1(\Omega,\sigma), d_0(\sigma) \right)>0.
\]
\qed

\section{Ergodic billiards}

In this section we give a proof of Theorem~\ref{thm:QE RN gaps}.
By Chebyshev's inequality,  it suffices to show:
\begin{prop}
Let $\Omega\subset \R^2$ be a bounded, piecewise smooth domain. Assume that the billiard map for $\Omega$ is ergodic. Then for every $\sigma>0$,
\begin{equation}\label{markov}
\lim_{N\to \infty} \frac 1N\sum_{n\leq N} \left|d_n(\sigma)-\frac{2\length(\partial \Omega)}{\area (\Omega)} \cdot \sigma \right| = 0 .
\end{equation}
 \end{prop}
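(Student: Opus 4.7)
The plan is to derive the $L^1$-convergence \eqref{markov} from a pointwise (in $\tau$) quantum ergodicity statement for boundary values of Robin eigenfunctions, combined with dominated convergence in the parameter $\tau$.

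Set $m:=2\length(\partial\Omega)/\area(\Omega)$. By the variational formula of Lemma~\ref{lem:formula for dn},
\[
d_n(\sigma)-m\sigma = \int_0^\sigma \Bigl(\int_{\partial\Omega} u_{n,\tau}^2\,ds - m\Bigr)\,d\tau,
\]
so the triangle inequality together with Fubini yields
\[
\frac 1N\sum_{n\leq N}\bigl|d_n(\sigma)-m\sigma\bigr| \leq \int_0^\sigma V_N(\tau)\,d\tau,\qquad V_N(\tau):=\frac 1N\sum_{n\leq N}\Bigl|\int_{\partial\Omega} u_{n,\tau}^2\,ds - m\Bigr|.
\]
It therefore suffices to show that $\int_0^\sigma V_N(\tau)\,d\tau\to 0$.

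For each fixed $\tau\in[0,\sigma]$, I would prove $V_N(\tau)\to 0$ via a boundary quantum variance estimate
\[
\frac 1N\sum_{n\leq N}\Bigl(\int_{\partial\Omega} u_{n,\tau}^2\,ds - m\Bigr)^2 \longrightarrow 0,
\]
from which $V_N(\tau)\to 0$ follows by Cauchy--Schwarz. For $\tau=0$ (the Neumann case) this is precisely the boundary quantum ergodicity of Hassell--Zelditch \cite{HZ}, whose argument microlocalizes boundary integrals to symbols on $B^*\partial\Omega$ and then applies ergodicity of the billiard ball map (together with the local Weyl law, which already supplied the mean value $m$ in Section~\ref{sec:mean value}). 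For fixed $\tau>0$ the additional Robin term is of zeroth order relative to the boundary calculus, so the same scheme applies, yielding pointwise-in-$\tau$ vanishing of the variance under the ergodicity hypothesis.

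Finally I would upgrade pointwise convergence to integrated convergence by dominated convergence: Lemma~\ref{uniform local weyl law} gives
\[
V_N(\tau) \leq \frac 1N\sum_{n\leq N}\int_{\partial\Omega} u_{n,\tau}^2\,ds + m \leq C(\Omega)+m,
\]
uniformly in $N\geq 1$ and $\tau\in[0,\sigma]$. Hence $\int_0^\sigma V_N(\tau)\,d\tau\to 0$, which is \eqref{markov}. The main obstacle is the Robin quantum variance estimate for $\tau>0$: the Neumann case is exactly \cite{HZ}, and the extension amounts to checking that the microlocal reduction to the billiard ball map on $B^*\partial\Omega$ is insensitive to the zeroth order Robin term, so that ergodicity continues to force the classical variance to vanish.
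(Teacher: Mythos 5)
Your proposal follows essentially the same route as the paper: the variational formula plus triangle inequality, reduction to the averaged boundary quantity $S_N(\tau)$, the Hassell--Zelditch boundary quantum variance estimate with Cauchy--Schwarz for each fixed $\tau$, and dominated convergence via the uniform bound of Lemma~\ref{uniform local weyl law}. The only difference is that the paper invokes \cite[eq 7.1]{HZ} directly as valid for every fixed $\sigma\geq 0$ (so no separate extension of the microlocal argument to the Robin case is needed), whereas you sketch re-deriving it; this does not change the structure of the argument.
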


\begin{proof}

We again use the variational formula \eqref{formula for dn}
\[
d_n(\sigma)  = \int_0^\sigma \left( \int_{\partial \Omega} u_{n,\tau}^2 ds  \right) d\tau  .
\]
 We have
\[
\begin{split}
 \left| d_n(\sigma)- \frac{2\length(\partial \Omega)}{\area (\Omega)}\sigma   \right|
&=
\left|   \int_0^\sigma \Big( \int_{\partial \Omega} u_{n,\tau}^2 ds  \Big) d\tau   - \frac{2\length(\partial \Omega)}{\area (\Omega)} \sigma \right|
\\
&=  \left| \int_0^\sigma \Big(\int_{\partial \Omega} u_{n,\tau}^2 ds - \frac{2\length(\partial \Omega)}{\area (\Omega)}  \Big) d\tau  \right|
\\
&\leq  \int_0^\sigma \left|\int_{\partial \Omega} u_{n,\tau}^2 ds   - \frac{2\length(\partial \Omega)}{\area (\Omega)} \right| d\tau .
\end{split}
\]
 Therefore
 \[
 \begin{split}
 \frac 1N\sum_{n\leq N} \left| d_n(\sigma)- \frac{2\length(\partial \Omega)}{\area (\Omega)}\sigma \right|  & \leq
 \int_0^\sigma   \frac 1N\sum_{n\leq N}   \left|\int_{\partial \Omega} u_{n,\tau}^2 ds   - \frac{2\length(\partial \Omega)}{\area (\Omega)} \right|   d\tau
 \\
 &=: \int_0^\sigma S_N(\tau) d\tau
\end{split}
\]
where
\[
S_N(\tau):= \frac 1N\sum_{n\leq N}   \left|\int_{\partial \Omega} u_{n,\tau}^2 ds   - \frac{2\length(\partial \Omega)}{\area (\Omega)} \right| .
\]

% The local Weyl law \cite{HZ} (valid for any piecewise smooth $\Omega$) shows that for any fixed $\sigma$,
% \[
% \lim_{N\to \infty}  \frac 1N\sum_{n\leq N}   \int_{\partial \Omega} u_{n,\sigma}^2 ds = \frac{2\length(\partial \Omega)}{\area (\Omega)}
% \]
Hassell and Zelditch \cite[eq 7.1]{HZ} (see also  Burq \cite{Burq}) show that if the billiard map is ergodic then for each $\sigma\geq 0$,
\begin{equation}\label{HZ variance}
\lim_{N\to \infty}
\frac 1N\sum_{n\leq N} \left| \int_{\partial\Omega} u_{n,\sigma}^2 ds-\frac{2\length(\partial \Omega)}{\area (\Omega)} \right|^2 = 0 .
\end{equation}
%\marginpar{is this what they show?}
Therefore, by Cauchy-Schwarz, $S_N(\tau)$ tends to zero for all $\tau\geq 0$, by \eqref{HZ variance}; by Lemma~\ref{uniform local weyl law}  we know that $S_N(\tau)\leq C$ is uniformly bounded for all $\tau\leq \sigma$,
%it is dominated by an integrable function $0\leq S_N(\tau)\leq S_\infty(\tau)\in L^1(0,\sigma)$
so that by the Dominated Convergence Theorem we  deduce that the limit of the integrals tends to zero, hence that
\[
\lim_{N\to \infty}  \frac 1N\sum_{n\leq N} \left| d_n(\sigma)- \frac{2\length(\partial \Omega)}{\area (\Omega)}\sigma \right|    =  0.
\]
 \end{proof}

We note that  Theorem~\ref{thm:QE RN gaps} is valid in any dimension $d\geq 2$ for piecewise smooth domains $\Omega\subset\R^d$ with ergodic billiard map as in \cite{HZ}, with the mean value   interpreted as $\frac{2\Vol_{d-1}(\partial \Omega)}{\Vol_d (\Omega)}  \sigma$. 

\section{ Variable Robin function} \label{sec:Variable sigma}
In this section, we indicate extensions of our general results to the case of variable boundary conditions. 
\subsection{Variable boundary conditions} 
The general Robin boundary condition is obtained by taking    a function on the boundary  $\sigma: \partial \Omega\to \R$ which we assume is always non-negative: $\sigma(x)\geq 0$ for all $x\in \partial \Omega$. Thus we look for solutions of 
\[
\Delta u +\lambda u=0 \;{\rm on} \;\Omega,
\]
\[
 \frac{\partial u}{\partial n}(x) +\sigma(x) u(x) = 0, \quad x\in \partial \Omega 
\]
which is interpreted in weak form as saying that 
\begin{equation*}%\label{weak form}
\int_\Omega \nabla u_{n } \cdot \nabla v +\oint_{\partial \Omega}  \sigma  u_{n} v    = \lambda_n  \int_\Omega u_{n } v
\end{equation*}
for all $v\in H^1(\Omega)$. 
We will assume that $\sigma$ is continuous. Then  we obtain positive Robin eigenvalues
\[
0<\lambda_0^\sigma\leq \lambda_1^\sigma\leq \dots
\]
except that in the Neumann case $\sigma\equiv 0$ we also have zero as an eigenvalue.

Robin to Neumann bracketing is still valid here, in the following form: if $\sigma_1,\sigma_2\in C(\partial \Omega)$ are two continuous functions with $0\leq \sigma_1\leq \sigma_2$ and such that there is some point $x_0\in \partial \Omega$ such that there is strict inequality $\sigma_1(x_0)<\sigma_2(x_0)$ (by continuity this therefore holds on a neighborhood of $x_0$), then we have a strict inequality \cite{Rohleder} % (cf \cite[page 82]{BFK})
\begin{equation}\label{RNbracket} 
\lambda_n^{\sigma_1}<\lambda_n^{\sigma_2}, \quad \forall n\geq 1.
\end{equation}

Fix such a Robin function $\sigma\in C(\partial \Omega)$, which is positive $\sigma> 0$. We are interested in the Robin-Neumann gaps 
\[
d_n(\sigma):=\lambda_n^\sigma - \lambda_n^0 
\]
which are positive by \eqref{RNbracket}. 

\subsection {Extension of general results}
The  lower and upper bounds of Theorems~\ref{thm:RN lower bound} and \ref{general smooth upper bd}  remain valid for variable $\sigma$ by an easy reduction to the constant case:  Let
\[
\sigma_{\min}=\min_{x\in \partial \Omega} \sigma(x), \quad \sigma_{\max}=\max_{x\in \partial \Omega} \sigma(x)
\]
so that $0<\sigma_{\min}\leq \sigma_{\max}$ (with equality only if $\sigma$ is constant). Using \eqref{RNbracket} gives
\[
\lambda_n^0<\lambda_n^{\sigma_{\min}}\leq \lambda_n^{\sigma}\leq \lambda_n^{\sigma_{\max}}
\]
so that 
\[
d_n(\sigma_{\min})\leq d_n(\sigma)\leq d_n(\sigma_{\max}).
\]

For instance, the universal lower bound for star-shaped domains (Theorem~\ref{thm:RN lower bound}) follows because $d_n(\sigma)\geq d_n(\sigma_{\min}) \geq C(\sigma_{\min})>0$, etcetera.

%\subsection{Mean values and ergodic billiards}
 The existence of mean values (Theorem~\ref{thm:mean of E RN gaps}) and the almost sure convergence of the gaps to the mean value in the ergodic case (Theorem~\ref{thm:QE RN gaps})  require an adjustment of the variational formula (Lemma~\ref{lem:formula for dn}) which is provided in \S~\ref{sec:var lem}. 
 Once that is in place, 
%
%Concerning the existence of the mean value (Theorem~\ref{thm:mean of E RN gaps}), 
%which in the constant case is the statement that  for any 
% bounded, piecewise smooth domain, the mean value of the RN gaps exists, and equals
%$$\lim_{N\to \infty} \frac 1N \sum_{n=1}^N d_n(\sigma) = \frac{2\length(\partial \Omega)}{\area (\Omega)}\cdot \sigma .
%$$
%for variable $\sigma>0$, 
the result is  
%\begin{thm}
\begin{equation}\label{mean value var}
\lim_{N\to \infty} \frac 1N \sum_{n=1}^N d_n(\sigma) = \frac{2\oint_{\partial \Omega}\sigma(s)ds}{\area (\Omega)} .
\end{equation}
%\end{thm}

%The new ingredient is to find an analogue for the variable case of the variational formula  \cite[Lemma 2.11]{AFK} for the derivative of $\lambda_n(\sigma)$ in the case of constant $\sigma$, which was
%\begin{equation*}%\label{Var formula}
%\frac{d \lambda_{n}(\sigma)}{d\sigma} = \int_{\partial \Omega} |u_{n,\sigma}(s)|^2 ds  %=  \tens[u_{n,\sigma}]
%\end{equation*}
%where $u_{n,\sigma}$ is any normalized eigenfunction associated with $\lambda_n(\sigma)$.

Given the mean value formula \eqref{mean value var},  Theorem~\ref{thm:QE RN gaps} (almost sure convergence of the RN gaps to the mean in the ergodic case) also follows.

\subsection{A variational formula}\label{sec:var lem}

 Let  $\Omega\subset \R^d$ be a bounded Lipschitz domain.   
Fix a continuous, positive Robin function $\sigma:\partial \Omega\to \R_{>0}$. 
We consider a one-parameter deformation of the boundary value problem $\Delta u+\lambda u=0$, 
\begin{equation}\label{deformed robin}
 \frac{\partial u}{\partial n}(x) +\alpha \sigma(x) u(x) = 0, \quad x\in \partial \Omega  
\end{equation}
with a real parameter $\alpha\geq 0$. Denote the corresponding eigenvalues by 
\[
\lambda_1(\alpha)\leq \lambda_2(\alpha)\leq \dots \leq \lambda_n(\alpha)\leq \dots
\]
By Robin-Neumann bracketing, if $0\leq \alpha_1<\alpha_2$ then
\[
\lambda_n(\alpha_1)<\lambda_n(\alpha_2), \quad \forall n\geq 1 .
\]
The previous RN gaps $d_n(\sigma)$ are precisely $\lambda_n(1)-\lambda_n(0)$.  The variational formula for the RN gaps is:

\begin{lem}%\label{lem:formula for dn}
 Let  $\Omega\subset \R^d$ be a bounded Lipschitz domain. Then
\begin{equation*}%\label{formula for dn}
d_n(\sigma)  =  \int_0^1 \left( \int_{\partial \Omega} |u_{n,\alpha}|^2 ds  \right) d\alpha
\end{equation*}
where $u_{n,\alpha}$ is any $L^2(\Omega)$-normalized eigenfunction associated with $\lambda_n(\alpha)$.
\end{lem}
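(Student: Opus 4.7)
The plan is to mirror the proof of Lemma~\ref{lem:formula for dn}, now applied to the one-parameter family $\alpha \mapsto \lambda_n(\alpha)$ defined by \eqref{deformed robin}. The three ingredients are (i) absolute continuity of $\alpha \mapsto \lambda_n(\alpha)$ on $[0,1]$, (ii) a Hadamard-type variational formula for its a.e.\ derivative, and (iii) the fundamental theorem of calculus, applied to $d_n(\sigma) = \lambda_n(1) - \lambda_n(0)$.

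For step (i), the structural result \cite[Lemma 2.11]{AFK} invoked in the proof of Lemma~\ref{lem:formula for dn} rests on analytic perturbation theory for self-adjoint operators whose quadratic form depends analytically on a real parameter. This extends directly to the present family: the forms
\[
Q_\alpha(u,v) = \int_\Omega \nabla u \cdot \nabla v \, dx + \alpha \oint_{\partial \Omega} \sigma \, u v \, ds, \qquad u,v \in H^1(\Omega),
\]
are linear (hence analytic) in $\alpha$, so the associated self-adjoint operators form an analytic family of type (B) in the sense of Kato. This yields piecewise real-analyticity of $\lambda_n(\alpha)$ with at most finitely many non-smooth points in each bounded interval, and absolute continuity on $[0,1]$ via the strict monotonicity \eqref{RNbracket}.

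For step (ii), at any $\alpha \in [0,1]$ where $\lambda_n(\alpha)$ is simple and differentiable, I would differentiate the weak identity $Q_\alpha(u_{n,\alpha}, v) = \lambda_n(\alpha) \int_\Omega u_{n,\alpha} v \, dx$ in $\alpha$, test against $v = u_{n,\alpha}$, and use the $L^2$-normalization $\int_\Omega u_{n,\alpha}^2 \, dx = 1$ together with a cancellation against the weak identity tested at $v = \partial_\alpha u_{n,\alpha}$ to eliminate all contributions involving the $\alpha$-derivative of the eigenfunction. The resulting variational formula expresses $d\lambda_n(\alpha)/d\alpha$ as a boundary integral of $|u_{n,\alpha}|^2$; integrating over $\alpha \in [0,1]$ then delivers the stated identity.

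The main technical obstacle, exactly as in the constant-$\sigma$ case, is the ambiguity of $u_{n,\alpha}$ at eigenvalue crossings and the failure of classical differentiability of $\lambda_n(\alpha)$ there. Both issues are handled by piecewise analyticity (which makes the exceptional set locally finite, hence of measure zero) combined with absolute continuity, which recovers $\lambda_n(1) - \lambda_n(0)$ as the Lebesgue integral of the a.e.-defined derivative regardless of the choice of eigenfunction representative at the exceptional points.
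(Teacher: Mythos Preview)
Your proposal is correct and follows essentially the same approach as the paper: both invoke analytic perturbation theory (the paper via a direct adaptation of \cite[Lemma~2.11]{AFK}, which you frame as Kato type~(B)) to obtain piecewise analyticity and absolute continuity of $\alpha\mapsto\lambda_n(\alpha)$, establish the Hadamard formula $\frac{d}{d\alpha}\lambda_n(\alpha)=\oint_{\partial\Omega}\sigma\,|u_{n,\alpha}|^2\,ds$ for $L^2$-normalized $u_{n,\alpha}$, and integrate over $[0,1]$. The only cosmetic difference is that the paper derives the derivative formula via a difference-quotient argument---testing $u_{n,\alpha}$ against $u_{n,\beta}$ in the weak form, subtracting, and letting $\beta\to\alpha$---which requires only $H^1$-continuity of $\alpha\mapsto u_{n,\alpha}$ rather than the differentiability your direct approach uses; both are standard within the analytic perturbation framework.
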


The  proof is identical to that of Lemma~\ref{lem:formula for dn}, except that we need a reformulation of   \cite[Lemma 2.11]{AFK} to this context\footnote{\cite[Lemma 2.11]{AFK} allows $\Omega \subset \R^N$ to be any bounded Lipschitz domain and takes $\sigma\equiv 1$. }:  
%\marginpar{is $L^1(\partial \Omega)$ the right condition?}
\begin{lem}
 Let  $\Omega\subset \R^d$ be a bounded Lipschitz domain and $\sigma>0$ a positive continuous  function on the boundary $\partial \Omega$.  For $\alpha\geq 0$, let $\lambda_n(\alpha)$ be the eigenvalues of the Robin eigenvalue problem \eqref{deformed robin}
%$-\Delta u=\lambda u$, 
%\[
%\frac{\partial u}{\partial n}(x) +\alpha \sigma(x) u(x) = 0, \quad x\in \partial \Omega .
%\]

Then for $n \geq 1$, $\lambda_n(\alpha)$ 
is an absolutely continuous and strictly increasing function of $\alpha\in [0,\infty)$, which is differentiable almost everywhere in  $(0,\infty)$.  
Where it exists, its derivative is given by
\begin{equation}\label{new var formula b}
\frac{d}{d\alpha} \lambda_n(\alpha) = \frac{ \oint_{\partial \Omega} \sigma  u_{n,\alpha}^2 }{\int_\Omega  u_{n,\alpha}^2}
\end{equation}
where $ u_{n,\alpha}\in H^1(\Omega)$  is any eigenfunction associated with $\lambda_n(\alpha)$. 

%In addition, when $n = 1$, $\lambda_1( \alpha)$  is concave, with $\lambda_1 \alpha) <\lambda_1^D(\Omega)$, the first Dirichlet eigenvalue of $\Omega$, and if $\Omega$ is connected, then $\lambda_1(\Omega,\alpha) $  is analytic in $\alpha\geq 0$. 
 \end{lem}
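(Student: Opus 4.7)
The proof follows that of \cite[Lemma 2.11]{AFK} almost verbatim, with the unweighted boundary integral replaced by its $\sigma$-weighted analogue wherever it appears. For $\alpha\geq 0$ introduce on $H^1(\Omega)$ the closed quadratic form
\[
Q_\alpha(u) := \int_\Omega |\nabla u|^2\,dx + \alpha\oint_{\partial\Omega}\sigma\, u^2\,ds.
\]
Since $\sigma\in C(\partial\Omega)$ and the boundary trace $H^1(\Omega)\to L^2(\partial\Omega)$ is continuous, $Q_\alpha$ is closed and bounded below on $L^2(\Omega)$; its associated self-adjoint operator has compact resolvent, and the eigenvalues $\lambda_n(\alpha)$ obey the standard min-max formula over $n$-dimensional subspaces of $H^1(\Omega)$.

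The first step is to apply Kato's analytic perturbation theory for quadratic forms. Because $\alpha\mapsto Q_\alpha(u)$ is affine in $\alpha$ for each fixed $u\in H^1(\Omega)$, the family $\{Q_\alpha\}_{\alpha\geq 0}$ is a real-analytic family of type (B) in the sense of Kato. Kato's theorem then yields, after a local relabelling, a parametrization of the eigenvalues and eigenprojections by real-analytic functions of $\alpha$; two distinct analytic branches can cross only on a locally finite subset of $[0,\infty)$, so the ordered eigenvalue $\lambda_n(\alpha)$ is piecewise real-analytic on $[0,\infty)$ with non-smoothness confined to that locally finite set.

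Next I would compute the derivative on any open interval of analyticity by Hellmann-Feynman. Pick a real-analytic $L^2$-normalized eigenfunction $u_{n,\alpha}$ associated with $\lambda_n(\alpha)$ on such an interval and differentiate the identity $\lambda_n(\alpha)=Q_\alpha(u_{n,\alpha})$ in $\alpha$. Testing the weak form of the eigenvalue equation against $\partial_\alpha u_{n,\alpha}\in H^1(\Omega)$, combined with $\partial_\alpha\|u_{n,\alpha}\|_{L^2}^2 = 0$, produces the usual cancellation in which every term involving $\partial_\alpha u_{n,\alpha}$ drops out and only the explicit $\alpha$-derivative of the form survives, giving
\[
\frac{d}{d\alpha}\lambda_n(\alpha) = \oint_{\partial\Omega}\sigma\, u_{n,\alpha}^2\,ds,
\]
which is \eqref{new var formula b} after clearing the normalization.

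Strict monotonicity of $\lambda_n(\alpha)$ then follows because a Robin eigenfunction cannot vanish on any open subset of $\partial\Omega$ by the unique continuation principle invoked in \cite{Rohleder}, so $\oint\sigma u_{n,\alpha}^2>0$ and the derivative is strictly positive. Absolute continuity on any bounded interval is obtained by applying the fundamental theorem of calculus separately on each of the finitely many analytic pieces and gluing using continuity of $\lambda_n$ at the crossing points (which is immediate from the min-max characterization via strong form convergence $Q_\alpha\to Q_{\alpha_0}$), which yields $\lambda_n(\beta)-\lambda_n(\alpha)=\int_\alpha^\beta \lambda_n'(t)\,dt$. The main delicacy is in verifying Kato's type-(B) hypothesis carefully and in ruling out any singular part of the distributional derivative at the locally finite set of crossings; both are handled exactly as in \cite{AFK}, where the argument is carried out for the case $\sigma\equiv 1$ but depends on $\sigma$ only through the bilinear structure of $Q_\alpha$ and so transfers without change.
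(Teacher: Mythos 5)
Your proposal is correct, and its backbone is the same as the paper's: both defer to \cite[Lemma 2.11]{AFK} (equivalently, Kato's perturbation theory for a self-adjoint holomorphic family of type (B) with compact resolvent) for piecewise analyticity of the eigenvalue branches, the locally finite set of splitting points, continuity and absolute continuity, and both then establish \eqref{new var formula b} only at non-splitting points. The difference is in how the derivative is computed. The paper never differentiates the eigenfunction branch: it writes the weak formulation at two parameters $\alpha,\beta$, tests each against the other's eigenfunction, subtracts to get
\[
\frac{\lambda_n(\beta)-\lambda_n(\alpha)}{\beta-\alpha}
=\frac{\oint_{\partial\Omega}\sigma\,u_{n,\alpha}u_{n,\beta}\,ds}{\int_\Omega u_{n,\alpha}u_{n,\beta}},
\]
and lets $\beta\to\alpha$, which needs only $H^1$-convergence $u_{n,\beta}\to u_{n,\alpha}$ (supplied by \cite{AFK}). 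Your Hellmann--Feynman route instead differentiates $\lambda_n(\alpha)=Q_\alpha(u_{n,\alpha})$ along an analytic branch, using $\partial_\alpha u_{n,\alpha}\in H^1(\Omega)$ as a test function; this is legitimate but consumes the stronger input (differentiability of the eigenfunction in the form norm) that Kato's theorem provides, whereas the paper's difference-quotient trick is slightly more economical. Two minor points, neither a gap: for strict positivity of $\oint_{\partial\Omega}\sigma u_{n,\alpha}^2\,ds$ you only need that $u_{n,\alpha}$ does not vanish a.e.\ on $\partial\Omega$ (vanishing there, combined with the Robin condition, would kill both Cauchy data and force $u\equiv 0$ by unique continuation as in \cite{Rohleder}), which is weaker than the ``no vanishing on an open boundary subset'' you invoke; and when gluing the fundamental theorem of calculus across splitting points one passes from open to closed analytic pieces using nonnegativity of the derivative (monotone convergence), exactly as in \cite{AFK}, which you correctly delegate there.
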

 \begin{proof} 
 The proof is verbatim that of \cite[Lemma 2.11]{AFK} where $\sigma\equiv 1$. As is explained there, each eigenvalue depends locally analytically on $\alpha$, with at most a locally finite set of splitting points. We just repeat the computation of the derivative at any 
  $\alpha$ which is not a splitting point for $\lambda_n(\alpha)$:
 We use the weak formulation of the boundary condition, as saying that for all $v\in H^1(\Omega)$, 
\begin{equation}\label{weak form t}
\int_\Omega \nabla  u_{n,\alpha} \cdot \nabla v +\oint_{\partial \Omega} \alpha\sigma(s)  u_{n,\alpha}(s) v(s) ds = \lambda_n(\alpha) \int_\Omega  u_{n,\alpha} v .
\end{equation}
In particular, applying \eqref{weak form t} with   $v=u_{n,\beta}$ gives
\begin{equation}\label{weak form t1}
\int_\Omega \nabla u_{n,\alpha} \cdot \nabla u_{n,\beta} +\oint_{\partial \Omega} \alpha\sigma  u_{n,\alpha}  u_{n,\beta} ds = \lambda_n(\alpha) \int_\Omega u_{n,\alpha}  u_{n,\beta}.
\end{equation}
Changing the roles of $\alpha$ and $\beta$ gives
\begin{equation}\label{weak form t2}
\int_\Omega \nabla u_{n,\alpha} \cdot \nabla u_{n,\beta} +\oint_{\partial \Omega} \beta\sigma  u_{n,\alpha} u_{n,\beta}  ds = \lambda_n(\beta) \int_\Omega u_{n,\alpha}  u_{n,\beta}.
\end{equation}
Subtracting \eqref{weak form t1} from \eqref{weak form t2} gives
\begin{equation*}%\label{finite diff of evs}
\frac{\lambda_n(\beta)-\lambda_n(\alpha)}{\beta-\alpha} = \frac{\oint_{\partial \Omega}  \sigma(s) u_{n,\alpha}(s) u_{n,\beta}(s) ds }{ \int_\Omega u_{n,\alpha}  u_{n,\beta}} .
\end{equation*}
Taking the limit $  \beta\to \alpha$ and assuming that $u_{n,\beta}\to u_{n,\alpha} $ in $H^1(\Omega)$ as $\beta \to \alpha$, as verified in  \cite[Lemma 2.11]{AFK} so that in particular the denominator is eventually nonzero, gives \eqref{new var formula b}. 
\end{proof}

 \section{Boundedness of RN gaps for rectangles} \label{sec:rectangles}
%We show that the Robin-Neumann gaps for the rectangle are bounded above by $C\sigma$.
We consider the rectangle $Q_{L}=[0,1]\times [0,L]$, with $L\in (0,1]$ the aspect ratio. We denote by $\lambda_0^\sigma\leq \lambda_1^\sigma\leq \dots$ the ordered Robin eigenvalues. We will prove Theorem~\ref{thm:uniform upper bound}, that
%\begin{thm} \label{thm:RN gaps for rectangle}
%For every $\sigma>0$, there is some $C_L(\sigma)>0$ so that
\[
0<\lambda_n^\sigma-\lambda_n^0  <C_L(\sigma)   .
\]
%\end{thm}
%\textcolor{red}{Note: This holds $\forall \sigma$, but is not the optimal bound  $C(\sigma) = C_0 \cdot \sigma$.}
%\marginpar{Note!}
\subsection{The one-dimensional case}
Let $ \sigma>0$ be the Robin constant.
The Robin problem on the unit interval is $-u_n''=k_n^2 u_n$, with the one-dimensional Robin boundary conditions
\[
-u'(0) +\sigma u(0)=0, \quad u'(1)+\sigma u(1)=0  .
\]
The eigenvalues of the Laplacian on the unit interval are the numbers $-k_n^2$ where the frequencies $k_n=k_n(\sigma)$ are the solutions of the secular equation $(k^2-\sigma^2)\sin k = 2k\sigma \cos k$, or
\begin{equation}\label{secular eq}
\tan (k) =\frac{2\sigma k}{k^2-\sigma ^2}
\end{equation}
(see Figure~\ref{fig:seceqplot}) and the corresponding eigenfunctions are 
\[
u_n(x)=k_n\cos(k_nx)+\sigma \sin(k_n x).
\] 
%\new{A\fixme{Added the same pictures as in our rectangle paper.} few graphs of the $k_{n}$ are depicted in Figure \ref{fig:knplots}.}

%\begin{figure}[ht]
%\begin{center}
%\includegraphics{k0.pdf}
%\includegraphics[width=15cm]{knplots.png}
%\caption{Left: $k_{n}(\sigma)$ with $0\le n \le 5$, $\sigma\in [0,10]$. Right: $k_{n}(\sigma)-n\cdot \pi$, $0\le n\le 5$, $\sigma\in [0,10]$.}
%\label{fig:knplots}
%\end{center}
%\end{figure}

 \begin{figure}[ht]
\begin{center}
\includegraphics{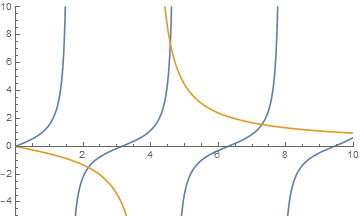}
\caption{ The secular equation \eqref{secular eq} for $\sigma=4$. Displayed are plots of $\tan k$ vs. $\frac{2\sigma k}{k^2-\sigma ^2}$.}
\label{fig:seceqplot}
\end{center}
\end{figure}

As a special case\footnote{Of course, in this case it directly follows from the secular equation \eqref{secular eq}.}
of Dirichlet-Neumann bracketing \eqref{NRD bracketing}, we know that given $\sigma>0$, for each $n\geq 0$
there is a unique solution $k_n=k_n(\sigma)$ of the secular equation \eqref{secular eq}
with
$$
k_n\in ( n\pi, (n+1)\pi), \quad n\geq 0.
$$
Note that $k_n(0) = n\pi$.

%\begin{figure}[ht]
%\begin{center}
%\includegraphics[height=30mm,width=12cm]{seculareqsidebyside.pdf}
%\caption{ Solutions of the secular equation $\tan \pi k = \frac{2(k/\sigma)}{(k/\sigma)^2-1}$ with $\sigma=0.9$ (left) and $\sigma=0.1$. Every interval $(n-1,n)$ of length $1$ contains a unique solution $k_{n}$, $n\geq 1$.   }
%\label{fig:seculareq}
%\end{center}
%\end{figure}
%We recall that we defined $k_n(\sigma)\in ((n-\frac 12)\pi, (n+\frac 12)\pi)$ ($n\geq 1$), and $k_0\in (\sigma,\frac \pi 2)$
%by the secular equation
%\[
% \tan k =\frac{2k\sigma}{k^2-\sigma^2}  .
%\]

 From \eqref{secular eq}, we have as $n\to \infty$,
\[
k_n(\sigma)=n\pi +\arctan\left( \frac{2\sigma}{k_n(\sigma)} \frac 1{1-\frac{\sigma^2} {k_n(\sigma)^2}} \right) =n\pi +\frac{2\sigma}{k_n(\sigma)} +O\left( k_n(\sigma)^{-3}\right)
\]
so that 
\begin{equation}\label{asymp one dim}
k_n(\sigma)^2-k_n(0)^2 \sim 4\sigma, \quad n\to \infty .
\end{equation}
%which is the analogue of Proposition~\ref{???} 6.1 in the case of the segment.
We can interpret, for $\Omega$ being the unit interval, $4=2\#\partial \Omega/\length \Omega$ so that we find convergence of the RN gaps to their mean value in this case.

From \eqref{asymp one dim} we deduce: 

\begin{lem}
For every $\sigma>0$, there is some $C(\sigma)>0$ so that %for $\sigma<\sigma_0$,
\begin{equation}\label{eq:one dim gap}
k_n(\sigma)^2-k_n(0)^2 \leq C(\sigma), \quad \forall n\geq 0.
\end{equation}
\end{lem}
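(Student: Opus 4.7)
The plan is to derive \eqref{eq:one dim gap} as an immediate consequence of the asymptotic \eqref{asymp one dim} combined with the trivial observation that any finite initial segment of a real sequence is bounded. In particular, no further analysis of the secular equation is required beyond what has already been carried out.

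Fix $\sigma>0$. By the asymptotic \eqref{asymp one dim}, the sequence $n\mapsto k_n(\sigma)^2 - k_n(0)^2$ converges to $4\sigma$ as $n\to\infty$. Hence there exists $N_0=N_0(\sigma)$ such that
\[
k_n(\sigma)^2 - k_n(0)^2 \;\leq\; 4\sigma + 1 \qquad \text{for all } n \geq N_0.
\]
For the remaining indices $0\leq n < N_0$, we use the Dirichlet--Neumann bracketing inclusion $k_n(\sigma)\in(n\pi,(n+1)\pi)$ and $k_n(0)=n\pi$: both numbers are finite, so the difference $k_n(\sigma)^2-k_n(0)^2$ is a finite positive real, and the maximum
\[
M \;:=\; \max_{0\leq n < N_0}\bigl(k_n(\sigma)^2 - k_n(0)^2\bigr)
\]
is finite by virtue of being a maximum over a finite set. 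Setting $C(\sigma):=\max(4\sigma+1,\,M)$ then yields the claimed uniform bound for every $n\geq 0$. There is no real obstacle here: the asymptotic \eqref{asymp one dim}, established just above by inverting the secular equation and Taylor expanding $\arctan$, does all the work, and converting a convergent sequence to a bounded one is immediate.
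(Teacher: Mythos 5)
Your proposal is correct and is essentially the paper's own argument: the paper states the lemma with the words ``From \eqref{asymp one dim} we deduce,'' i.e.\ it likewise obtains the uniform bound directly from the asymptotic $k_n(\sigma)^2-k_n(0)^2\sim 4\sigma$ plus boundedness of the finitely many initial terms. Your explicit handling of the indices $n<N_0$ via the bracketing $k_n(\sigma)\in(n\pi,(n+1)\pi)$ just spells out what the paper leaves implicit.
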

\begin{comment}
\begin{proof}
We first treat small $n$'s, precisely $0\leq n\leq 2\sigma/\pi$. For these, we simply set
\[
C_1(\sigma)  = \max_{0\leq n\leq  2\sigma/\pi} k_n(\sigma)^2-k_n(0)^2 .
\]

Now assume $n\geq 2\sigma/\pi$.
We write
\[
k_n(\sigma) = \pi n +N, \quad N=N_n(\sigma) \in (0,\pi) .
\]
Then the secular equation reads as
\[
\tan N = \frac{2k\sigma}{k^2-\sigma^2}
\]
and since $k_n> n\pi\geq 2\sigma$, we see that $\tan N>0$ and so $N\in (0,\pi/2)$.
%Consider the Robin-Neumann gap $k_n^2-k_n(0)^2$ for the unit interval.
We have
\[
k_n^2-k_n(0)^2 = (k_n -k_n(0)) \cdot (k_n +k_n(0))   <N\cdot 2k_n .
\]
%(for $n=0$, $k_0(\sigma)=N$ by definition).
Since $N\in (0,\pi/2)$,
\[
N\leq \tan N  =\frac{2k\sigma}{k^2-\sigma^2}
\]
and so
\begin{equation}\label{first ineq for diffs}
k_n^2-k_n(0)^2 <N\cdot 2k_n \leq \frac{4k_n^2\sigma}{k_n^2-\sigma^2}   = \frac{4\sigma}{1-\left(\frac{\sigma}{k_n}\right)^2}  .
\end{equation}
Since we assume that  $\pi n/2\geq \sigma$, we have
\begin{equation}\label{bound for sigma/k}
\frac{\sigma}{k_n} <\frac{\sigma}{n\pi}\leq \frac 12
\end{equation}
 so that substituting \eqref{bound for sigma/k} in \eqref{first ineq for diffs} we find
 \[
 k_n^2-k_n(0)^2  <\frac {16}{3} \sigma, \quad n\geq 2\sigma/\pi .
 \]
Thus taking $C(\sigma) =\max(C_1(\sigma), \frac {16}{3} \sigma)$ we obtain
\[
 k_n^2-k_n(0)^2\leq C(\sigma)
\]
for all $n\geq 0$.
\end{proof}
\end{comment}

\subsection{Proof of Theorem~\ref{thm:uniform upper bound}}\label{sec:pass to ordered evs}
The frequencies for the interval $[0,L]$ are $\frac{1}{L}\cdot k_{m}(\sigma \cdot L)$.
 Hence the  Robin energy levels of $Q_{L}$ are the numbers
\begin{equation} \label{Lambda for square}
\eigen_{n,m}(\sigma)=k_{n}(\sigma)^{2}+\frac{1}{L^{2}}\cdot k_{m}(\sigma \cdot L)^{2}, \quad n,m\geq 0.
\end{equation}
We have
\[
0\leq \eigen_{n,m}(\sigma)-\eigen_{n,m}(0)
=(k_{n}(\sigma)^{2}-k_{n}(0)^{2}) +  \frac{1}{L^{2}}\cdot  \left(k_{m}(\sigma \cdot L)^{2}-k_{m}(0)^2\right) .
\]
From the one-dimensional result \eqref{eq:one dim gap}, we   deduce that
\[
\eigen_{n,m}(\sigma)-\eigen_{n,m}(0)  \leq C(\sigma) + \frac 1{L^2}C(L\sigma) =    C_L(\sigma).
\]

 We now pass from the $\eigen_{m,n}(\sigma)$ to the ordered eigenvalues $\{\lambda_k^\sigma: k=0,1,\dots\}$.
 We know that $\lambda_k^\sigma\geq \lambda_k^0$, and want to show that $\lambda_k^\sigma\leq \lambda_k^0+C_L(\sigma)$.
 For this it suffices to show that the interval $I_k:=[0,\lambda_k^0+C_L(\sigma)]$ contains at least $k+1$ Robin eigenvalues, since then it will contain $\lambda_0^\sigma,\dots, \lambda_k^\sigma$ and hence we will find $\lambda_k^\sigma\leq \lambda_k^0+C_L(\sigma)$.

 The interval  $I_k$ contains the interval $[0,\lambda_k^0]$ and so certainly contains the first $k+1$ Neumann eigenvalues $\lambda_0^0,\dots,\lambda_k^0$, which are of the form $\Lambda_{m,n}(0)$ with $(m,n)$ lying in a set $\mathcal S_k$. Since $\Lambda_{m,n}(\sigma)\leq \Lambda_{m,n}(0)+C_L(\sigma)$, the interval $I_k$ must contain the $k+1$ eigenvalues  $\{\Lambda_{m,n}(\sigma)
:(m,n)\in \mathcal S_k\}$, and we are done.
 \qed

\section{Application of boundedness of the RN gaps to level spacings}\label{sec:level spacing}

In this section, we show that the level spacing distribution of the Robin eigenvalues for the desymmetrized square is a delta function at the origin, as is the case with Neumann or Dirichlet boundary conditions.

Recall the definition of the level spacing distribution: We are given  a sequence of levels $x_0\leq x_1\leq x_2\leq \dots$. We assume that $x_N=cN+o(N)$, as is the case of the eigenvalues of a planar domain. Let
$\delta_n=(x_{n+1}-x_n)/c$ be the normalized nearest neighbour gaps. so that the average gap is unity. The level spacing distribution $P(s)$ of the sequence is then defined as
\[
\int_0^y P(s)ds = \lim_{N\to \infty} \frac 1N \#\{n\leq N: \delta_n\leq y\}
\]
(assuming that the limit exists).

Recall that the Robin spectrum has systematic double multiplicities $\Lambda_{m,n}(\sigma) = \Lambda_{n,m}(\sigma)$ (see \eqref{Lambda for square} with $L=1$), which forces half the gaps to vanish for a trivial reason. To avoid this issue, one takes only the levels $\Lambda_{m,n}(\sigma)$ with $m\leq n$, which we call the desymmetrized Robin spectrum.

\begin{thm}
\label{thm:level spacing delta}
For every $\sigma\geq 0$, the level spacing distribution for the desymmetrized
Robin spectrum on the square is a delta-function at the origin.
\end{thm}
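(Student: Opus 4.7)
The plan is to combine the uniform boundedness of the Robin--Neumann gaps (Theorem~\ref{thm:uniform upper bound}) with the classical Landau estimate on sums of two squares to deduce that almost all normalized gaps collapse to zero. From the formula $\Lambda_{m,n}(\sigma)=k_m(\sigma)^2+k_n(\sigma)^2$ (i.e.\ \eqref{Lambda for square} with $L=1$) together with the one-dimensional bound \eqref{eq:one dim gap}, every Robin eigenvalue on the unit square lies in the interval
$$\bigl[\pi^2(m^2+n^2),\;\pi^2(m^2+n^2)+C(\sigma)\bigr].$$
Thus the desymmetrized Robin spectrum naturally partitions into \emph{clusters} indexed by $M\in B:=\{m^2+n^2:0\leq m\leq n\}$, where cluster $M$ consists of the $r^*(M):=\#\{(m,n):0\leq m\leq n,\,m^2+n^2=M\}$ eigenvalues $\Lambda_{m,n}(\sigma)$ with $m^2+n^2=M$, all contained in an interval of length at most $C(\sigma)$.

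By Weyl's law the desymmetrized counting function satisfies $N^\sigma(T)\sim T/(8\pi)$, so the mean gap is the positive constant $8\pi$; meanwhile, Landau's theorem gives
$$|B\cap[1,X]|=O\bigl(X/\sqrt{\log X}\bigr).$$
To handle clusters whose $C(\sigma)$-intervals overlap, I aggregate any maximal chain of overlapping clusters into a single \emph{super-cluster}. Landau's bound still yields at most $O(N/\sqrt{\log N})$ super-clusters among the first $N$ eigenvalues, and each super-cluster of width $W$ satisfies $W\leq k\cdot C(\sigma)$, where $k$ is the number of Landau integers it contains (so $\sum k=|B\cap[1,T/\pi^2]|$).

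Finally, fix $\epsilon>0$. A gap $x_{n+1}-x_n>\epsilon$ in the ordered desymmetrized Robin spectrum is either internal to a super-cluster---where the internal gaps sum to at most $W\leq kC(\sigma)$, so at most $kC(\sigma)/\epsilon$ of them exceed $\epsilon$---or else it bridges two consecutive super-clusters, contributing at most one such gap per super-cluster. Summing over the super-clusters meeting $[0,T]$, the total number of gaps exceeding $\epsilon$ is $O(T/\sqrt{\log T})=o(N^\sigma(T))$, which is precisely the statement that the level-spacing distribution is $\delta_0$. I expect the main obstacle to be the bookkeeping for overlapping clusters; once these are subsumed by super-clusters, the estimate reduces to a clean pigeonhole argument. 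The case $\sigma=0$ is even simpler, as internal gaps then vanish identically and one only needs the Landau bound.
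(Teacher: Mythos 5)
Your argument is correct and is essentially the paper's proof: both rest on the bounded Robin--Neumann shifts for the square, clustering of the spectrum according to sums of two squares, Landau's $O(N/\sqrt{\log N})$ bound to show there are $o(N)$ clusters, and a pigeonhole bound of at most (cluster width)$/\epsilon$ large gaps inside each cluster. The only difference is cosmetic: the paper clusters by index sets of the ordered Neumann spectrum (so clusters are automatically consecutive blocks and no overlap bookkeeping is needed), while you cluster by the value $m^2+n^2$ and patch up possible interleaving with your super-cluster aggregation, which works equally well.
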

In other words, if we denote by $\lambda_0^\sigma\leq \lambda_1^\sigma\leq \dots$ the ordered (desymmetrized) Robin eigenvalues, then the cumulant of the level spacing distribution satisfies: For all $y>0$,
\[
\int_0^y P(s)ds = \lim_{N\to \infty} \frac 1N \#\left\{n\leq N:
\frac 12\frac{\area(\Omega)}{4\pi}(\lambda_{n+1}^\sigma-\lambda_n^\sigma) \leq y\right\} = 1.
\]

% \begin{remark}
%The same result   holds for the equilateral triangle.
% \end{remark}
\begin{proof}
The Neumann spectrum  for the square consists of the numbers $ m^2+n^2$ (up to a multiple), with $m,n\geq 0$. There is a systematic double multiplicity, manifested by the symmetry $(m,n)\mapsto (n,m)$. We remove it by requiring $m\leq n$.
Denote the integers which are sums of two squares by
\[
s_1=0<s_2=1<s_3=2<s_4=4 <s_5=5<\dots<s_{14}=25<\dots
\]
We define index clusters $\mathcal N_i$ as the set of all indices of desymmetrized Neumann eigenvalues which coincide with $s_i$:
\[
\mathcal N_i = \{n: \lambda_n^0=s_i\}
\]
For instance, $s_0=0=0^2+0^2$ has multiplicity one, and gives the index set $\mathcal N_1=\{1\}$; $s_1=1= 0^2+1^2$ has multiplicity $1$  (after desymmetrization)   and gives $\mathcal N_2=\{2\}$;   $s_3=2=1^2+1^2$ giving $\mathcal N_3 = \{3\}$, $\dots$
%$s_4=4=2^2+0^2=0^2+2^2$, $\mathcal N_4=\{5,6\}$; $s_5=5=2^2+1^2=1^2+2^2$, $\mathcal N_5=\{7,8\}$; $s_6=8 = 2^2+2^2$, $\mathcal N_6=\{9\}$; $s_7=9=3^2+0^2=0^2+3^2$, $\mathcal N_7=\{10,11\}$; $\dots$
$s_{14}=25=   0^2+5^2=3^2+4^2$, $\mathcal N_{14} = \{14,15 \}$, etcetera.
Then these are sets of consecutive integers which form a partition of the natural numbers $\{1,2,3,\dots\}$, and if $i<j$ then the largest integer in $\mathcal N_i$ is smaller than the smallest integer in $\mathcal N_j$.

Denote by $\lambda_n^\sigma$ the ordered desymmetrized Robin eigenvalues: $\lambda_0^\sigma\leq \lambda_1^\sigma\leq \dots$,
so for $\sigma=0$ these are just the integers $s_i$ repeated with multiplicity $\#\mathcal N_i$.
For each $\sigma\geq 0$, we define clusters $C_i(\sigma)$ as the set of all  desymmetrized Robin eigenvalues $\lambda_n^\sigma$ with $n\in \mathcal N_i$:
\[
C_i(\sigma) = \{\lambda_n^\sigma: n\in \mathcal N_i\} .
\]

Now use the boundedness of the RN gaps (Theorem~\ref{thm:uniform upper bound}):
$0\leq \lambda_n^\sigma-\lambda_n^0\leq C(\sigma)$,
to deduce that  the clusters have bounded diameter:
\[
\diam C_i(\sigma)\leq C(\sigma) .
\]
If  $\#\mathcal N_i=1$ then $\diam C_i(\sigma)=0$, so we may assume that $\#\mathcal N_i\geq 2$ and write
\[
\mathcal N_i = \{n_-,n_-+1,\dots, n_+\}, \quad n_+ =\max \mathcal N_i, \quad n_- =\min \mathcal N_i.
\]
  Then
\[
\begin{split}
\diam C_i(\sigma) &= \lambda_{n_+}^\sigma-\lambda_{n_-}^\sigma
\\
&= (\lambda_{n_+}^\sigma -s_i)+(s_i-\lambda_{n_-}^\sigma)
\\
& =
(\lambda_{n_+}^\sigma-\lambda_{n_+}^0)-(\lambda_{n_-}^\sigma-\lambda_{n_-}^0)\leq C(\sigma)-0 = C(\sigma) .
\end{split}
\]

For the first $N$ eigenvalues, the number $I$ of clusters containing them is the number of the $s_i$ involved, which is at most the number of $s_i\leq \lambda_N^\sigma  \approx N$.
 A classical result of Landau \cite{Landau} states that the number of integers $\leq N$ which are sums of two squares is about $N/\sqrt{\log N}$, in particular\footnote{This is much easier to show using a sieve.} is $o(N)$.
Hence
$$I\leq \#\{i: s_i \ll N\} =o(N) .
$$

We count the number of nearest neighbour\footnote{For simplicity we replace  $\frac 12 \frac{\area(\Omega)}{4\pi}$ by $1$, that is we don't bother normalizing so as to have mean gap unity;  the result is independent of this normalization.} gaps $\delta_n^\sigma=\lambda_{n+1}^\sigma-\lambda_n^\sigma$ of size bigger than $y$.
Of these, there are at most $I$ such that $\lambda_{n+1}^\sigma$ and $\lambda_n^\sigma$ belong to different clusters, and since $I=o(N)$ their contribution is negligible. For the remaining ones, we group them by cluster to which they belong:
\[
\# \{n\leq N: \delta_n^\sigma>y\} = \sum_{i=1}^I\#\left\{n: \lambda_{n+1}^\sigma, \lambda_n^\sigma\in C_i(\sigma) \; \& \; \delta_n^\sigma>y\right\}
+ o(N) .
\]
We have
\begin{multline*}
\#\left\{n: \lambda_{n+1}^\sigma, \lambda_n^\sigma\in C_i(\sigma)\; \& \; \delta_n^\sigma>y\right\} =
\#\left\{n\in\mathcal N_i , \;  n<\max \mathcal N_i , \; \delta_n^\sigma>y\right\}
\\
=
\sum_{\substack{n\in\mathcal N_i\\  n<\max \mathcal N_i \\ \delta_n^\sigma>y}}  \frac yy
 < \sum_{\substack{n\in\mathcal N_i\\   n<\max \mathcal N_i \\  \delta_n^\sigma>y}}  \frac {\delta_n^\sigma}y
\leq \frac 1y \sum_{\substack{n\in\mathcal N_i \\ n<\max \mathcal N_i }} \delta_n^\sigma  .
\end{multline*}

The sum of nearest neighbour gaps in each cluster is
\[
 \sum_{\substack{n\in\mathcal N_i \\ n<\max \mathcal N_i }} \delta_n^\sigma  =\sum_{\substack{n\in\mathcal N_i \\ n<\max \mathcal N_i }}( \lambda_{n+1}^\sigma-\lambda_n^\sigma)
  =\lambda_{\max \mathcal N_i}^\sigma-\lambda_{\min \mathcal N_i}^\sigma =\diam C_i(\sigma) \leq C(\sigma) .
\]
 Thus we find
\[
\#\{n: \lambda_{n+1}^\sigma, \lambda_n^\sigma\in C_i(\sigma)\; \& \; \delta_n^\sigma>y\} \leq \frac{C(\sigma)}{y}
\]
so that
\[
\# \{n\leq N: \delta_n^\sigma>y\} \leq \sum_{i=1}^I \frac{C(\sigma)}{y} + o(N)  = \frac{C(\sigma)}{y}I + o(N) .
\]
Since $I=o(N)$, and $C$, $y$ are fixed, we conclude that
\[
\frac 1N \# \{n\leq N: \delta_n^\sigma>y\}  = o(1) .
\]

Thus the cumulant of the level spacing distribution satisfies: For all $y>0$,
\[
\int_0^y P(s)ds = \lim_{N\to \infty} \frac 1N \#\{n\leq N: \delta_n^\sigma\leq y\} = 1
\]
so that $P(s) $ is a delta function at the origin.
\end{proof}

Note that the claim is not that all gaps $\lambda_{n+1}^\sigma-\lambda_n^\sigma$ tend to zero.   
On the contrary, it is possible to produce thin sequences $\{n\}$ so that $\lambda_{n+1}^\sigma-\lambda_n^\sigma$
tend to infinity. 
Looking at  the proof of Theorem \ref{thm:level spacing delta}, 
these correspond to the rare cases when $\lambda_{n}^{\sigma}$ and $\lambda_{n+1}^{\sigma}$ belong to neighboring ``clusters" which are far apart from each other. 
%Alternatively, one can take a sequence of $n$ belonging to clusters of {\em bounded size}, see also ~\cite{RWrectangle}.

 \section{The unit disk}

 \subsection{Upper bounds for $d_n$ via Weyl's law}
 In this section we prove Theorem~\ref{thm:upper bd for disk}.
 We first show how to obtain upper bounds for the gaps $d_n$  from upper bounds in Weyl's law for the Robin/Neumann problem.
 The result is that
\begin{lem}\label{lem:dn and weyl}
Let $\Omega$ be a bounded planar domain. Assume that there is some $\theta\in (0,1/2)$ so that
\begin{equation}\label{Weyl law theta}
N_\sigma(x):=\#\{ \lambda_n^\sigma\leq x\} = \frac{\area(\Omega)}{4\pi}x +  \frac{\length(\partial\Omega)}{4\pi}\sqrt{x} + O_\sigma(x^{\theta}) .
\end{equation}
and the same result holds for $\sigma=0$. Then we have
\[
d_n(\sigma)\ll_\sigma n^{\theta} .
\]
\end{lem}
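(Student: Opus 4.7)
The strategy is to invert the Weyl asymptotic separately at $\sigma$ and at $0$ and subtract, exploiting the fact that the two leading terms in \eqref{Weyl law theta} (the Weyl volume term and the Ivrii-type boundary correction) are \emph{identical} for the Robin and Neumann problems — the $\sigma$-dependence is buried in the $O_\sigma(x^\theta)$ remainder. Write
\[
F(x):=\frac{\area(\Omega)}{4\pi}x+\frac{\length(\partial\Omega)}{4\pi}\sqrt{x},
\]
so that $N_\sigma(x)=F(x)+O_\sigma(x^\theta)$ and $N_0(x)=F(x)+O(x^\theta)$.

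Using monotonicity of the counting function and the ordering of the eigenvalues, $N_\sigma(\lambda_n^\sigma)\ge n+1$, while for each $\epsilon>0$ we have $N_\sigma(\lambda_n^\sigma-\epsilon)\le n$; letting $\epsilon\to 0^+$ and using continuity of $F$ gives
\[
F(\lambda_n^\sigma)=n+O_\sigma\bigl((\lambda_n^\sigma)^\theta\bigr),
\qquad
F(\lambda_n^0)=n+O\bigl((\lambda_n^0)^\theta\bigr).
\]
Weyl's law also tells us that $\lambda_n^\sigma, \lambda_n^0\asymp n$, so both error terms are $O_\sigma(n^\theta)$. Subtracting and expanding,
\[
F(\lambda_n^\sigma)-F(\lambda_n^0)=\frac{\area(\Omega)}{4\pi}\,d_n(\sigma)+\frac{\length(\partial\Omega)}{4\pi}\bigl(\sqrt{\lambda_n^\sigma}-\sqrt{\lambda_n^0}\bigr)=O_\sigma(n^\theta).
\]
By Robin-Neumann bracketing \eqref{NRD bracketing} both terms on the left are non-negative, so dropping the second yields $d_n(\sigma)\ll_\sigma n^\theta$, as desired.

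The only subtle point is controlling multiplicities when passing from $N_\sigma(\lambda_n^\sigma)$ to $F(\lambda_n^\sigma)$, but this is harmless: any multiplicity at level $\lambda\asymp n$ is itself bounded by $O(n^\theta)$ because $N_\sigma$ cannot jump by more than the Weyl error allows (apply the two-sided bound to $\lambda-\epsilon$ and $\lambda+\epsilon$ and let $\epsilon\to 0$). Hence multiplicities are absorbed into the $O_\sigma(n^\theta)$ term and do not affect the conclusion. The assumption $\theta<1/2$ plays no role in the argument itself; it is consistent with the fact that one cannot hope to beat the $\sqrt{x}$ secondary term by this purely Tauberian method.
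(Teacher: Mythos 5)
Your proposal is correct and follows essentially the same route as the paper: invert the two-term Weyl asymptotics at $\lambda_n^\sigma$ and $\lambda_n^0$ (handling multiplicities so that $F(\lambda_n^\sigma)=n+O_\sigma(n^\theta)$, which is the paper's bound $N_\sigma(\lambda_n^\sigma)=n+O_\sigma(n^\theta)$ in disguise), subtract, and use that the common leading terms cancel while both $\frac{\area(\Omega)}{4\pi}d_n(\sigma)$ and $\frac{\length(\partial\Omega)}{4\pi}(\sqrt{\lambda_n^\sigma}-\sqrt{\lambda_n^0})$ are non-negative by bracketing. The only cosmetic difference is that you drop the non-negative square-root term, whereas the paper factors it as $d_n(\sigma)$ times a positive coefficient; the conclusion is identical.
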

\begin{proof}
We first note that  \eqref{Weyl law theta} gives  
\begin{equation} \label{multiplicity bound}
N_0(\lambda_n^0) =n +O(n^\theta) 
\end{equation}
and likewise for the Robin counting function. 

Indeed, denote $\lambda=\lambda_n^0$, and pick $\varepsilon\in (0,1)$ sufficiently small so that  in the interval $[\lambda-\frac \varepsilon 2,  \lambda+\frac \varepsilon 2]$ there are no eigenvalues other than $\lambda$, which is repeated with multiplicity $K\geq 1$. Then $N(\lambda+\frac \varepsilon 2)-N(\lambda-\frac \varepsilon 2)=K$. On the other hand, by Weyl's law (with $A=\area(\Omega)/4\pi$, $B=\length(\partial\Omega)/4\pi$)  
\[
\begin{split}
K&=N(\lambda+\frac \varepsilon 2)-N(\lambda-\frac \varepsilon 2) 
\\
& = A(\lambda+\frac \varepsilon 2) +B\sqrt{\lambda+\frac \varepsilon 2} +O\left(\left(\lambda+\frac \varepsilon 2\right)^\theta\right) 
\\
&\quad  -  \left( A(\lambda-\frac \varepsilon 2) +B\sqrt{\lambda-\frac \varepsilon 2} +O\left( \left(\lambda-\frac \varepsilon 2\right)^\theta\right) \right) 
\\
&=A\varepsilon +O(\frac{\varepsilon}{\sqrt{\lambda}}) +O(\lambda^\theta).
\end{split}
\] 
Now use $|N(\lambda_n^0) - n|\leq K \ll \lambda^\theta \ll n^\theta $ which gives \eqref{multiplicity bound}.  

Now compare the counting functions $N_\sigma(\lambda_n^\sigma)$ and $N_0(\lambda_n^0)$ for the Robin and Neumann spectrum using \eqref{Weyl law theta} and \eqref{multiplicity bound}:
%We use Weyl's law with $x=\lambda_N^\sigma$, which from the main term of Weyl's law is asymptotically $\lambda_n^\sigma \approx n$, to obtain 
\[
n +O(n^\theta)= N_\sigma(\lambda_n^\sigma) = \frac{\area(\Omega)}{4\pi} \lambda_n^\sigma  +  \frac{\length(\partial\Omega)}{4\pi}\sqrt{\lambda_n^\sigma } + O_\sigma(n^{\theta}) .
\]
and 
%Likewise by the assumption for $\sigma=0$ (the Neumann problem),
\[
n+O(n^\theta)= N_0(\lambda_n^0) = \frac{\area(\Omega)}{4\pi} \lambda_n^0 +  \frac{\length(\partial\Omega)}{4\pi}\sqrt{\lambda_n^0 } + O(n^{\theta}) .
\]
%We use Weyl's law with $x=\lambda_N^\sigma$, which from the main term of Weyl's law is asymptotically $\lambda_n^\sigma \approx n$, to obtain  
%\[
%n= N_\sigma(\lambda_n^\sigma) = \frac{\area(\Omega)}{4\pi} \lambda_n^\sigma  +  \frac{\length(\partial\Omega)}{4\pi}\sqrt{\lambda_n^\sigma } + O_\sigma(n^{\theta}) .
%\]
%Likewise by the assumption for $\sigma=0$ (the Neumann problem),
%\[
%n= N_0(\lambda_n^0) = \frac{\area(\Omega)}{4\pi} \lambda_n^0 +  \frac{\length(\partial\Omega)}{4\pi}\sqrt{\lambda_n^0 } + O(n^{\theta}) .
%\]
Subtracting the two gives
\[
\Big( \lambda_n^\sigma - \lambda_n^0  \Big) \cdot \left(   \area(\Omega)+ \frac{  \length(\partial\Omega)}{ \sqrt{\lambda_n^\sigma}+\sqrt{\lambda_n^0}}\right)  = O_\sigma(n^{\theta})
\]
and therefore
\[
d_n(\sigma) = \lambda_n^\sigma - \lambda_n^0   = O_\sigma(n^{\theta})
\]
\end{proof}

Below we implement this strategy for the disk to obtain Theorem~\ref{thm:upper bd for disk}.

 \subsection{Relating Weyl's law and a lattice point count}\label{sec:weyllatdisk}

Define the domain
\[
D=\left\{ \left(x,y\right):\,x\in\left[-1,1\right],\max\left(0,-x\right)\le y\le g\left(x\right)\right\}
\]
where
\begin{equation}
\label{eq:g_def}
g\left(x\right)=\frac{1}{\pi}\left(\sqrt{1-x^{2}}-x\arccos x\right).
\end{equation}
\begin{figure}[ht]
\begin{center}
\includegraphics[height=40mm]{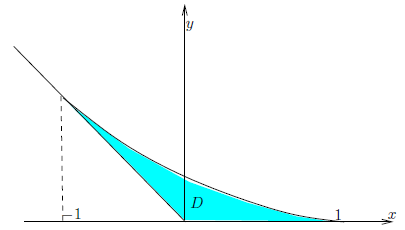}
\caption{The domain $D$.}
\label{thedomainD.fig}
\end{center}
\end{figure}
 Let
\[
N_{D}\left(\mu\right):=\#\left\{ \left(n,k\right):\left(n,k+\max(0,-n)-\frac 34 \right)\in\mu D\right\}
\]
%\[N_{D}\left(\mu\right):=\#\left\{ \left(n,k\right):\left(n,k-\frac{3}{4}\right)\in\mu D\right\}\]
and
\[
N_{\rm disk, \sigma}\left(x \right):=\#\left\{\lambda_n^\sigma\le x \right\} .
\]
 \begin{prop}\label{rel between weyl and lattice}
 Fix $\sigma\geq 0$. Then
 \[
N_{D}\left(\mu-\frac{C}{\mu^{3/7}}\right)-C\mu^{4/7}\le N_{\rm disk,\sigma}\left(\mu^2 \right)\le N_{D}\left(\mu+\frac{C}{\mu^{3/7}}\right)+C\mu^{4/7} .
\]
 \end{prop}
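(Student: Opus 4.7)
The plan is to convert the Robin eigenvalue count on the unit disk into a count of positive zeros of Bessel-type functions, and then to match this (up to controlled error) with the lattice point count in $\mu D$ via uniform asymptotics of Bessel functions, with the transition/turning-point region handled separately.

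First, by separation of variables in polar coordinates, the Robin eigenvalues on the unit disk are precisely the numbers $\rho^2$ where $\rho>0$ is a positive zero of
\[
F_n^\sigma(\rho):=\rho J_n'(\rho)+\sigma J_n(\rho)
\]
for some integer $n\ge 0$, counted with multiplicity $1$ for $n=0$ and multiplicity $2$ for $n\ge 1$ (coming from the angular factors $e^{\pm i n\theta}$), together with the trivial eigenvalue $\lambda=0$ when $\sigma=0$. Writing $\rho_{n,k}^\sigma$ for the $k$-th positive zero of $F_n^\sigma$, we get
\[
N_{\mathrm{disk},\sigma}(\mu^2)\;=\;\sum_{n\ge 0}m_n\,\#\{k\ge 1:\rho_{n,k}^\sigma\le \mu\}\;+\;\epsilon_\sigma,
\]
with $m_0=1$, $m_n=2$ for $n\ge 1$, and $\epsilon_\sigma\in\{0,1\}$.

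Second, I would import from Appendix~\ref{sec:BesselAppendix} the Debye/Olver uniform asymptotic expansion, which in the oscillatory regime (where $n/\rho$ is bounded away from $1$) yields
\[
\rho\,g(n/\rho)\;=\;k+\kappa_n+O\!\left(\tfrac{\sigma}{\sqrt{\rho^{2}-n^{2}}}\right)+O\!\left(\rho^{-2}\right),
\]
where the shift $\kappa_n$ equals $\tfrac14$ modulo indexing conventions; after absorbing the index shift $k\mapsto k+\max(0,-n)-\tfrac34$ used in defining $N_D$, this is exactly the condition $(n,k+\max(0,-n)-\tfrac34)\in\mu D$ up to the indicated error. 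Monotonicity of $\rho\mapsto \rho\,g(n/\rho)$ for $\rho>n$, with derivative $\pi^{-1}\sqrt{1-(n/\rho)^2}$, lets us invert and replace ``$\rho_{n,k}^\sigma\le \mu$'' by an explicit inequality $k\le \mu\,g(|n|/\mu)+\tfrac34+\text{(error)}$.

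Third, I would split the pairs $(n,k)$ by proximity to the turning point. On the good set $\mu-|n|\gg \mu^{4/7}$, the asymptotic error is controlled so that changing $\mu\mapsto \mu\pm C\mu^{-3/7}$ in $N_D$ absorbs it; the balance comes from the fact that a strip of width $\mu^{-3/7}$ along the smooth part of $\partial(\mu D)$ contains $O(\mu\cdot\mu^{-3/7})=O(\mu^{4/7})$ lattice points. On the bad set $\mu-|n|\lesssim \mu^{4/7}$, the Bessel zeros lie in the Airy regime near the turning point $\rho\approx n$; using the uniform Airy asymptotic of the appendix one bounds the number of such zeros $\rho_{n,k}^\sigma\le \mu$ by $O(\mu^{4/7})$. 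On the lattice side, the same pairs correspond to lattice points in a neighborhood of the cusp of $\mu D$ at $(1,0)$, where $g(x)\sim \tfrac{\sqrt{2}}{\pi}(1-x)^{3/2}$; a direct area/perimeter estimate shows this region contains $O(\mu^{4/7})$ lattice points as well. Both contributions are absorbed into the additive $C\mu^{4/7}$ term of the proposition.

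The principal obstacle is Step 2: establishing a uniform asymptotic for the zeros of $F_n^\sigma$ that is simultaneously effective in $n$, $\rho$, and $\sigma$, with error terms of precisely the right shape to make the $\mu^{-3/7}/\mu^{4/7}$ balance work. This is the technical content of Appendix~\ref{sec:BesselAppendix}. A secondary subtlety is propagating the Robin perturbation through the derivation so that bounded $\sigma$ enters only in lower-order terms (thereby preserving the Neumann-case error exponents); once these ingredients are in place, the two-sided sandwich of the proposition follows from routine lattice bookkeeping.
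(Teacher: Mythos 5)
Your overall strategy (separation of variables, uniform Bessel asymptotics, a dichotomy between the bulk and the turning-point/cusp region) is the right general shape, but the treatment of the ``bad set'' contains a genuine quantitative error that breaks the proof. You discard all pairs with $\mu-|n|\lesssim\mu^{4/7}$, claiming both the number of zeros $\rho^{\sigma}_{n,k}\le\mu$ with such $n$ and the number of lattice points in the corresponding neighbourhood of the cusp of $\mu D$ are $O(\mu^{4/7})$. This is false: for $n=\mu-t$ the number of admissible $k$ is $\approx\mu g(n/\mu)\approx c\,t^{3/2}\mu^{-1/2}$, so summing over $t\le\mu^{4/7}$ gives $\asymp\mu^{-1/2}\cdot\mu^{(4/7)(5/2)}=\mu^{13/14}$ points on each side (equivalently, the area of $\{(x,y)\in\mu D:\ x\ge\mu-\mu^{4/7}\}$ is $\asymp\mu^{2}\cdot(\mu^{-3/7})^{5/2}=\mu^{13/14}$). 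Since $\mu^{13/14}\gg\mu^{4/7}$, you cannot bound the two contributions separately and absorb them into the additive $C\mu^{4/7}$; you must actually \emph{match} the spectral count against the lattice count inside the cusp region up to $O(\mu^{4/7})$. This is exactly what the paper does: using the uniform Airy-type expansion valid up to the turning point (Lemma~\ref{lem:GeomEvs}, first part), one has $\kappa_{n,k}=F(n,k-\tfrac34)+O(n^{1/3}/k^{4/3})$ for $k\le n/c$, and then, for each fixed row $k$, monotonicity of $x\mapsto F(x,k-\tfrac34)$ with derivative bounded below shows the discrepancy between the two counts in that row is at most $C\mu^{1/3}k^{-4/3}+1$ (Lemma~\ref{lem:MainLemma}); summing over $k\le\mu^{4/7}$ gives $O(\mu^{1/3})+O(\mu^{4/7})$, while rows $k>\mu^{4/7}$ have pointwise error $O(\mu^{1/3}/k^{4/3})=O(\mu^{-3/7})$ and are absorbed by the shift $\mu\mapsto\mu\pm C\mu^{-3/7}$. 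Note that the exponent $4/7$ is produced precisely by balancing $\mu^{1/3}K^{-4/3}=\mu^{-3/7}$ against an additive error $K$, not by a strip/area count along $\partial(\mu D)$ (your $O(\mu\cdot\mu^{-3/7})$ strip heuristic is neither needed for the statement nor valid pointwise for thin strips along curves).

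Two secondary gaps in the same direction: even for $n$ in your ``good set'', the zeros with small $k$ lie within $O(n^{1/3})$ of the turning point, where the Debye/stationary-phase asymptotic you invoke is not valid, so the oscillatory-regime phase equation alone does not cover all pairs you count there; and the identification of the zero index $k$ with the integer $m$ arising from the phase condition $\rho\,g(n/\rho)=m-\tfrac34+O(\cdot)$ is not automatic -- the paper needs a separate continuity/deformation argument (end of the proof of Lemma~\ref{lem:GeomEvs}) to show $m=k$ uniformly in the range $|n|\le c\,k$. Your outline silently assumes both points.
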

 The argument extends   \cite[Theorem 3.1]{CdV disk}, \cite{GMWW} (who fix a flaw in the argument of \cite{CdV disk})  to Robin  boundary conditions. %\marginpar{Mention the flaw in CdV's result. What exponent did he get instead of $4/7$?}

%\cite{CdV} to Robin/Neumann boundary conditions with parameter $\sigma\ge0$.
%Since Theorem 2.1 in \cite{CdV} is proved for any $\beta\in\left(0,1\right)$
%(Dirichlet boundary condition corresponds to $\beta=1/4$ and Robin/Neumann
%boundary conditions corresponds to $\beta=3/4$), one only has to
%generalize T

We can now prove Theorem~\ref{thm:upper bd for disk}.
We use the result of \cite{GWW}
\footnote{They treat the shifted lattice $\Z^2-(0,\frac 14)$ but as they say \cite[Remark 6.5]{GWW},
the arguments also work for the shift by $(0,\frac 34)$. See \cite{GMWW}  for  an a further improvement in the Dirichlet case to
 $131/416=1/3-23/1248= 0.314904$. }
 %They seem to only treat the lattice point count for the Dirichlet case as they consider the
 %lattice point problem for the shifted lattice $\Z^2-(0,c)$ with $0\leq c<1/2$; the Dirichlet case is $c=1/4$, while the Neumann/Robin case is $c=3/4$.}
\[
N_{D}\left(\mu\right)  = \area(D)\mu^2+ \frac \mu 2+O\Big(\mu^{2(1/3-\delta)}\Big)
\]
where $ \delta=1/990$.
Noting that
\[
\area(D) = \frac{\area (\Omega)}{4\pi}=\frac 14,\quad \frac{ \length(\partial \Omega)}{4\pi} = \frac 12
\]
we obtain from Proposition~\ref{rel between weyl and lattice} that
\[
N_{\rm disk,\sigma}\left(x\right) = \frac{\area (\Omega)}{4\pi} x + \frac{ \length(\partial \Omega)}{4\pi} \sqrt{x} +O\Big(x^{1/3-\delta} \Big) .
\]
Applying Lemma~\ref{lem:dn and weyl} gives
\[
d_n(\sigma) =O\Big( n^{1/3-\delta} \Big)
\]
which proves Theorem~\ref{thm:upper bd for disk}. \qed

\subsection{Proof of Proposition~\ref{rel between weyl and lattice} }
 Fix a Robin parameter $\sigma\ge0$. Separating variables in polar coordinates $ (r,\theta) $ and inserting the boundary conditions, we find a basis of eigenfunctions of the form \[ f_{n,k}(r,\theta) = J_n(\kappa_{n,k} r)e^{i n \theta},\hspace{1em} n\in \mathbb{Z}, \; k=1,2,\dots\] with eigenvalues $\kappa_{n,k}^{2}$, where $\kappa_{n,k}$ is the $ k $-th positive zero of $xJ_{n}'\left(x\right)+\sigma J_{n}\left(x\right)$.
 In particular, for the Neumann case ($ \sigma=0 $), we get zeros of the derivative $ J'_n(x) $, denoted by $ j'_{n,k} $; since zero is a Neumann eigenvalue we use the standard convention that $ x=0 $ is counted as the first zero of $ J'_0(x) $.

Let
\[
S=\left\{ \left(x,y\right):y\ge\max\left(0,-x\right)\right\} ,
\]
and let $F:S\to\mathbb{R}$ be the degree $1$ homogeneous function
satisfying $F\equiv1$ on the graph of $g$. Obviously,
\[
F\left(n,k+\max\left(0,-n\right)-\frac{3}{4}\right)\le\mu\Longleftrightarrow\left(n,k+\max\left(0,-n\right)-\frac{3}{4}\right)\in\mu D;
\]
on the other hand, as will be shown in Lemma \ref{lem:GeomEvs} below, the numbers $\kappa_{n,k}$ are well approximated
by $F\left(n,k+\max\left(0,-n\right)-\frac{3}{4}\right)$.
This will give the desired connection between Weyl's law on the disk and the lattice count problem in dilations of $D$.

\begin{lem}
\label{lem:GeomEvs}Fix $\sigma\ge0$, and let $c>0$ be a constant.\\
1. As $n\to\infty$, uniformly for $k\le n/c$, we have
\begin{equation}
\kappa_{n,k}=F(n,k-\frac{3}{4})+O_{c,\sigma}\left(\frac{n^{1/3}}{k^{4/3}}\right).\label{eq:kappa_k<n}
\end{equation}
2. As $k\to\infty$, uniformly for $\left|n\right|\le c\cdot k$,
we have
\begin{equation}
\kappa_{n,k}=F\left(n,k+\max\left(0,-n\right)-\frac{3}{4}\right)+O_{c,\sigma}\left(\frac{1}{k}\right).\label{eq:kappa_k>n}
\end{equation}
\end{lem}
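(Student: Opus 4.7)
The plan is to recast the geometric condition $F(n,m)=\kappa$ as a Debye-phase equation for zeros of $xJ_n'(x)+\sigma J_n(x)$, and then apply classical uniform asymptotic expansions. First, reduce to $n\geq 0$: the identity $J_{-n}=(-1)^n J_n$ gives $\kappa_{-n,k}=\kappa_{n,k}$, while $\arccos(x)+\arccos(-x)=\pi$ yields $g(-x)=g(x)+x$ on $[0,1]$, hence $F(-n,k+n-3/4)=F(n,k-3/4)$ by homogeneity. For $n\geq 1$, parametrize $\kappa=n\sec\beta$, $\beta\in[0,\pi/2)$; a direct computation from $g(\cos\beta)=(\sin\beta-\beta\cos\beta)/\pi$ yields the equivalence
\[
F(n,m)=\kappa\iff n(\tan\beta-\beta)=\pi m.
\]
Thus the task reduces to showing $n(\tan\beta_k-\beta_k)=\pi(k-3/4)+E_{n,k}$ with the appropriate error $E_{n,k}$, and inverting via the Jacobian
\[
\frac{d\kappa}{d[n(\tan\beta-\beta)]}=\frac{n\sec\beta\tan\beta}{n\tan^2\beta}=\frac{1}{\sin\beta},
\]
which is $O(1)$ in the Debye regime and $(n/k)^{1/3}$ in the Airy regime.

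For Part 2 ($|n|\leq ck$, so $\beta$ stays away from the turning point $0$), I would invoke the classical Debye asymptotic expansions
\[
J_n(n\sec\beta)=\sqrt{\tfrac{2}{\pi n\tan\beta}}\cos\phi\cdot(1+O(n^{-1})),\quad J_n'(n\sec\beta)=-\sqrt{\tfrac{2\tan\beta}{\pi n}}\sin\beta\cdot\sin\phi\cdot(1+O(n^{-1})),
\]
with $\phi=n(\tan\beta-\beta)-\pi/4$. Substituting into $\kappa J_n'(\kappa)+\sigma J_n(\kappa)=0$ and dividing through gives $\tan\phi=O_\sigma(1/\kappa)$; the integer index is identified as $k-1$ by comparison with the Neumann case and with the small-$\beta$ asymptotics, yielding $n(\tan\beta_k-\beta_k)=\pi(k-3/4)+O_\sigma(1/k)$. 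Since $\sin\beta$ is bounded below, the Jacobian is $O(1)$ and $\kappa_{n,k}=F(n,k-3/4)+O(1/k)$. The endpoint $n=O(1)$ (corresponding to $\beta\to\pi/2$) is handled instead by the McMahon expansion for fixed order, with the same $O(1/k)$ error.

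For Part 1 ($k\leq n/c$, so $\beta$ is small and the Debye form degenerates at the turning point $\kappa=n$), I would invoke Olver's uniform expansion via the Liouville variable $\zeta$ defined by $\tfrac{2}{3}(-\zeta)^{3/2}=\tan\beta-\beta$:
\[
J_n(x)\sim\left(\tfrac{4\zeta}{1-(n/x)^2}\right)^{\!1/4}\!\tfrac{\operatorname{Ai}(n^{2/3}\zeta)}{n^{1/3}},\qquad J_n'(x)\sim-\left(\tfrac{4\zeta}{1-(n/x)^2}\right)^{\!-1/4}\!\tfrac{2\operatorname{Ai}'(n^{2/3}\zeta)}{x\,n^{2/3}},
\]
with higher-order corrections in $n^{-1}$. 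The Robin equation becomes $\operatorname{Ai}'(n^{2/3}\zeta)+O_\sigma(n^{-2/3})\operatorname{Ai}(n^{2/3}\zeta)=0$, whose $k$-th solution with $\kappa>n$ gives $\zeta=-a'_k/n^{2/3}+O(n^{-4/3})$, where $\{-a'_k\}$ are the zeros of $\operatorname{Ai}'$. The standard expansion $a'_k=(3\pi(k-3/4)/2)^{2/3}(1+O(k^{-2}))$ gives $(a'_k)^{3/2}=3\pi(k-3/4)/2+O(1/k)$, so
\[
n(\tan\beta_k-\beta_k)=\tfrac{2}{3}(a'_k)^{3/2}+O(1/k)=\pi(k-3/4)+O(1/k).
\]
In this regime the Jacobian is $1/\sin\beta\sim(n/k)^{1/3}$, so the $O(1/k)$ phase error translates to the claimed $O(n^{1/3}/k^{4/3})$ on $\kappa$.

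The main obstacle lies in Part 1: tracking errors through the Liouville change of variables and the $\sigma J_n$ perturbation uniformly over the full range $1\leq k\leq n/c$, so that the Olver relative error, the Airy-zero approximation, and the Robin shift combine into a clean $O(1/k)$ error on the phase — in particular, verifying that the $O(n^{-2/3})$ Robin shift of the Airy equation does not inflate the zero estimate beyond $O(n^{-4/3})$ on $\zeta$, and that the transitions into the pure Airy range $k=O(1)$ and into the matching Debye range $k\asymp n$ are both smooth. Part 2, by contrast, is essentially routine perturbative bookkeeping on top of the standard Debye and McMahon asymptotics.
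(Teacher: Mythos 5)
Your Part 1 follows the same route as the paper: combine the Olver uniform Airy-type expansions of $J_\nu(\nu z)$ and $J_\nu'(\nu z)$ into one for $\phi_\nu(\nu z)=J_\nu'(\nu z)+\frac{\sigma}{\nu z}J_\nu(\nu z)$, rerun Olver's zero derivation with $C_0(\zeta)-\frac{\sigma h^2(\zeta)}{2}$ in place of $C_0(\zeta)$, and insert $a_k'=-[\frac{3\pi}{2}(k-\frac{3}{4})]^{2/3}+O(k^{-4/3})$. Since you flag uniformity over $1\le k\le n/c$ as the main obstacle, note how the paper closes it: for bounded $k$ one has $\zeta\asymp \nu^{-2/3}$, so one must retain the explicit secondary term $-z'(\zeta)\bigl(C_0(\zeta)-\frac{\sigma h^2(\zeta)}{2}\bigr)/(\zeta\nu)$ of the uniform zero expansion and bound it by $O_{c,\sigma}(\nu^{-1/3}k^{-2/3})\ll_c \nu^{1/3}k^{-4/3}$; your perturbation estimate for the shifted Airy condition is consistent with this, so Part 1 is essentially the paper's argument.

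Part 2 as proposed has two genuine gaps. First, uniformity: the classical Debye expansions are asymptotic as the order tends to infinity, with error control only for $\beta$ in compact subsets of $(0,\pi/2)$, while McMahon is for fixed order; neither covers $n\to\infty$ slowly with $n=o(k)$ (i.e.\ $\beta\to\pi/2$ with unbounded order), so your Debye/McMahon dichotomy does not yield a single $O_{c,\sigma}(1/k)$ error over all $|n|\le c\cdot k$. The paper's Lemma \ref{lem:phi_asymp_lemma} exists precisely to fill this: starting from the integral representation \eqref{eq:BesselIntegral} and applying stationary phase and Laplace's method, it proves the leading oscillatory asymptotics of $\phi_\nu(x)$ with error $O_{C,\sigma}(x^{-1})$ uniformly for all $0\le\nu\le x/(1+C)$; you would need such a statement (or Debye-type expansions with error bounds uniform down to bounded order and up to $\beta=\pi/2$) rather than the textbook versions. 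Second, the index identification: from $\tan\phi=O(1/\kappa)$ you only learn that $\kappa_{n,k}\,g(n/\kappa_{n,k})=m-\frac{3}{4}+O(1/k)$ for \emph{some} integer $m=m(n,k)$, and asserting $m=k$ ``by comparison with the Neumann case and the small-$\beta$ asymptotics'' is not an argument — the Neumann case over the same range poses exactly the same uniformity problem, and small $\beta$ lies outside Part 2's regime. This is the delicate counting point in this circle of ideas. The paper proves $m=k$ by showing that $m$ is a locally constant, hence constant, function of $\nu$ on $[0,c\cdot k]$ (continuity of $\nu\mapsto\kappa_{\nu,k}$ plus a mean value estimate for $F$), and then pinning $m=k$ in the overlap $\nu\asymp k$ where the Part 1 uniform formula applies. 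An alternative would be interlacing $j'_{n,k}\le\kappa_{n,k}<j_{n,k}$ combined with Neumann/Dirichlet zero asymptotics uniform over the same range, but those are again not fixed-order McMahon statements; without some argument of this kind the lemma is not proved.
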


The proof of Lemma \ref{lem:GeomEvs} will be given in Appendix \ref{sec:BesselAppendix}.

It will be handy to derive an explicit formula for the function $F$,
which we will now do. Let $\zeta=\zeta\left(z\right)$ be the solution
to the differential equation
\begin{equation}
\left(\frac{\text{d}\zeta}{\text{d}z}\right)^{2}=\frac{1-z^{2}}{\zeta z^{2}}\label{eq:zeta_diff_eq}
\end{equation}
which for $z\ge1$ is given by
\begin{equation}
\frac{2}{3}\left(-\zeta\right)^{3/2}=\sqrt{z^{2}-1}-\arccos\left(\frac{1}{z}\right)\label{eq:zeta}
\end{equation}
(see \cite[Eq. 10.20.3]{DLMF}). The interval $z\ge1$ is bijectively
mapped to the interval $\zeta\le0$; denote by $z=z\left(\zeta\right)$
the inverse function.
\begin{lem}
For $x>0$, we have
\begin{equation}
F(x,y)=xz\Big(-x^{-2/3}\left(\frac{3\pi}{2}y\right)^{2/3}\Big).\label{eq:F_as_Z}
\end{equation}
Additionally, for $y\ge0$ we have $F\left(0,y\right)=\pi y$, and
for $\left(-x,y\right)\in S$ we have
\begin{equation}
F\left(-x,y\right)=F(x,y-x).\label{eq:F_symmetry}
\end{equation}
\end{lem}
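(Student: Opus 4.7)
The plan is to exploit the characterization of $F$ as the unique positive, degree-one homogeneous function on $S$ satisfying $F \equiv 1$ on the graph $\Gamma := \{(x, g(x)) : x \in [-1,1]\}$. Existence and uniqueness follow from the fact that each ray from the origin into $S$ meets $\Gamma$ in exactly one point: in the right half ($x > 0$), the slope $g(x)/x$ is monotone, running from $0$ at $x=1$ to $+\infty$ as $x \to 0^+$, while in the left half it runs from $-1$ at $x = -1$ to $-\infty$, covering all rays into $\{(x,y) : x \leq 0,\, y \geq -x\}$. For any $(x,y) \in S$, computing $F(x,y) = t$ therefore reduces to solving $y/t = g(x/t)$.

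For \eqref{eq:F_as_Z}, fix $x > 0$ and $y \geq 0$, let $t = F(x,y) > 0$, and substitute \eqref{eq:g_def} into $y/t = g(x/t)$ to obtain
\[
\pi y = \sqrt{t^2 - x^2} - x \arccos(x/t).
\]
Introducing $z := t/x \geq 1$, this becomes $\pi y / x = \sqrt{z^2 - 1} - \arccos(1/z)$. Matching with the defining identity \eqref{eq:zeta} for $z(\zeta)$ then gives $\tfrac{2}{3}(-\zeta)^{3/2} = \pi y / x$, hence $\zeta = -x^{-2/3}(\tfrac{3\pi}{2}y)^{2/3}$, and \eqref{eq:F_as_Z} follows from $F(x,y) = xz(\zeta)$.

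The value $F(0, y) = \pi y$ for $y \geq 0$ is immediate from homogeneity: $F(0, y) = y F(0,1)$, and $g(0) = 1/\pi$ places $(0, 1/\pi) \in \Gamma$, so $F(0, 1/\pi) = 1$ and hence $F(0, 1) = \pi$.

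For the symmetry \eqref{eq:F_symmetry}, I would first verify the elementary reflection identity $g(-x) = g(x) + x$ for $x \in [0,1]$ using $\arccos(-x) = \pi - \arccos(x)$. Next, define $H(x,y) := F(x, y - x)$ on the domain $\{(x,y) : x \geq 0,\, y \geq x\}$. Then $H$ is degree-one homogeneous, and $H(x, g(-x)) = F(x, g(-x) - x) = F(x, g(x)) = 1$ for $x \in [0,1]$. Since $(x,y) \mapsto F(-x, y)$ is also degree-one homogeneous on the same domain and equals $1$ on the same curve $\{(x, g(-x)) : x \in [0,1]\}$, the uniqueness principle of the first paragraph forces the two functions to agree, yielding $F(-x, y) = F(x, y - x)$. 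The only nontrivial computation is the reduction to the ODE identity \eqref{eq:zeta} in the proof of \eqref{eq:F_as_Z}; all remaining steps are routine algebra.
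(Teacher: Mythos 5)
Your proof is correct and takes essentially the same route as the paper: homogeneity reduces $F(x,y)=t$ to the statement that $(x/t,y/t)$ lies on the graph of $g$, and matching the resulting identity with \eqref{eq:zeta} gives \eqref{eq:F_as_Z}. The remaining claims, which the paper calls straightforward from the definitions, you verify correctly via the reflection identity $g(-x)=g(x)+x$ and a homogeneity/uniqueness-on-rays argument.
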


\begin{proof}
Let $x>0$, and denote $t=\frac{F\left(x,y\right)}{x}$. Then $F\left(\frac{1}{t},\frac{y}{tx}\right)=1$
so that the point $\left(\frac{1}{t},\frac{y}{tx}\right)$ lies on
the graph of $g,$ and therefore
\[
\frac{y}{x}=\frac{1}{\pi}\left(\sqrt{t^{2}-1}-\arccos\left(\frac{1}{t}\right)\right)=\frac{1}{\pi}\frac{2}{3}\left(-\zeta\left(t\right)\right)^{3/2}
\]
so that
\[
t=z\Big(-x^{-2/3}\left(\frac{3\pi}{2}y\right)^{2/3}\Big).
\]
The other claims are also straightforward from the definitions.
\end{proof}
We proceed towards the proof of Proposition~\ref{rel between weyl and lattice} by following the ideas of \cite[Sec. 3]{CdV disk}. Let
\begin{align*}
N_{D}^{1}\left(\mu\right) & =\#\left\{ \left(n,k\right):\left(n,k+\max\left(0,-n\right)-\frac{3}{4}\right)\in\mu D,\:\left|n\right|<c\cdot k\right\} ,\\
N_{D}^{2}\left(\mu\right) & =\#\left\{ \left(n,k\right):\left(n,k-\frac{3}{4}\right)\in\mu D,\:n\ge c\cdot k\right\} ,
\end{align*}
and
\begin{align*}
N_{\text{disk},\sigma}^{1}\left(\mu^{2}\right) & =\#\left\{ \left(n,k\right):\kappa_{n,k}\le\mu,\,\left|n\right|<c\cdot k\right\} ,\\
N_{\text{disk},\sigma}^{2}\left(\mu^{2}\right) & =\#\left\{ \left(n,k\right):\kappa_{n,k}\le\mu,\,n\ge c\cdot k\right\} ,
\end{align*}
so that
\[
N_{D}\left(\mu\right)=N_{D}^{1}\left(\mu\right)+2N_{D}^{2}\left(\mu\right)
\]
and
\[
N_{\text{disk},\sigma}\left(\mu^{2}\right)=N_{\text{disk},\sigma}^{1}\left(\mu^{2}\right)+2N_{\text{disk},\sigma}^{2}\left(\mu^{2}\right),
\]
where we used (\ref{eq:F_symmetry}) and the relation $\kappa_{-n,k}=\kappa_{n,k}$.
We first compare  $N_{D}^{1}\left(\mu\right)$ and $N_{\text{disk},\sigma}^{1}\left(\mu^{2}\right)$:
\begin{lem}
\label{lem:LargeRange}There exists a constant $C=C_{c,\sigma}>0$
such that
\[
N_{D}^{1}\left(\mu-\frac{C}{\mu}\right)\le N_{\mathrm{disk},\sigma}^{1} \left(\mu^{2}\right)\le N_{D}^{1}\left(\mu+\frac{C}{\mu} \right).
\]
\end{lem}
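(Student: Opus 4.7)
The plan is to leverage Lemma~\ref{lem:GeomEvs}(2), which provides the uniform asymptotic
\[
\kappa_{n,k} = F(n, k + \max(0,-n) - 3/4) + O_{c,\sigma}(1/k)
\]
throughout the cone $|n| \leq c\cdot k$. The defining conditions for $N_D^1$ and $N_{\text{disk},\sigma}^1$ amount to thresholding at $\mu$ the quantities $F(n, k+\max(0,-n)-3/4)$ and $\kappa_{n,k}$ respectively, so the task reduces to absorbing the $O(1/k)$ discrepancy between these two thresholds into a shift of $\pm C/\mu$ in the level $\mu$.

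The crux is to show that, inside the cone $|n| \leq c\cdot k$, the degree-one homogeneous quantity $F(n, k + \max(0,-n)-3/4)$ is comparable to $k$ for $k$ large: namely $F \asymp_{c} k$. I would prove this by writing $F(n,y) = k\cdot F(n/k, y/k)$, noting that as $k \to \infty$ with $|n|/k \leq c$ the argument $(n/k, (k+\max(0,-n)-3/4)/k)$ stays in a compact subset of the interior of the domain of $F$ on which $F$ is continuous and bounded above and below by positive constants. This uses the explicit formula \eqref{eq:F_as_Z} together with the symmetry \eqref{eq:F_symmetry}, which handles the transition at $n=0$. From this comparability it follows that whenever either $F$ or $\kappa_{n,k}$ lies within $O(1/\mu)$ of $\mu$, both $k$ and $F$ are comparable to $\mu$, and hence the $O(1/k)$ error in Lemma~\ref{lem:GeomEvs}(2) is effectively $O(1/\mu)$.

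Given this, the two set inclusions follow directly. If $\kappa_{n,k} \leq \mu$ with $|n| < c\cdot k$ and $k \geq k_0$ (some fixed threshold), then $F \leq \kappa_{n,k} + O(1/k) \leq \mu + O(1/\mu) \leq \mu + C/\mu$ for $C$ sufficiently large, putting the pair $(n,k)$ into the set counted by $N_D^1(\mu + C/\mu)$; conversely, if $F \leq \mu - C/\mu$, then $\kappa_{n,k} \leq F + O(1/k) \leq \mu - C/\mu + O(1/\mu) \leq \mu$. The finitely many pairs $(n,k)$ with $k < k_0$ contribute in a $\mu$-independent way: both $\kappa_{n,k}$ and $F$ are bounded on this set, so for $\mu$ larger than an explicit constant they are counted on both sides of each inequality and contribute identically, while for bounded $\mu$ the inequalities are made trivial by enlarging $C$.

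I expect the main obstacle to be establishing the uniform comparability $F(n, k+\max(0,-n) - 3/4) \asymp k$, which requires care across the transition $n=0$ where the shift $\max(0,-n)$ changes definition, and near the edges of the cone where one coordinate becomes comparatively small. Once this uniform lower bound on $F$ is extracted from the explicit formula \eqref{eq:F_as_Z} and its symmetry \eqref{eq:F_symmetry}, the lemma follows by a direct substitution into Lemma~\ref{lem:GeomEvs}(2).
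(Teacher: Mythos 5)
Your proposal is correct and follows essentially the same route as the paper: establish $F\bigl(n,k+\max(0,-n)-\tfrac34\bigr)\asymp_c k$ in the cone $|n|<c\cdot k$ via the homogeneity of $F$ and the symmetry \eqref{eq:F_symmetry}, so that near the threshold $\mu$ one has $k\gg_c\mu$ and the $O_{c,\sigma}(1/k)$ error of Lemma~\ref{lem:GeomEvs} becomes $O(1/\mu)$, yielding the sandwich. The only cosmetic difference is that the paper organizes the final step as a split of the count into pairs with $F\le\mu$ and pairs with $\mu<F\le\mu+C'/k$ (for which $k\gg_c\mu$), which is the precise form of your remark that the error only matters near the threshold; your displayed chain of inequalities should be read with that case distinction in mind.
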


\begin{proof}
Assume that $ |n|<c \cdot k $. By \eqref{eq:F_symmetry} and the homogeneity of $ F $ we have \[ F\left(n,k+\max(0,-n)-\frac{3}{4}\right)=F\left(|n|,k-\frac{3}{4}\right)=kF\left(\frac{|n|}{k},1-\frac{3}{4k}\right), \] and since $ 1\ll F\left(\frac{|n|}{k},1-\frac{3}{4k}\right) \ll_c 1 $, we conclude that  \[ k \ll F\left(n,k+\max(0,-n)-\frac{3}{4}\right) \ll_c k.\]
Hence, if $F\left(n,k+\max(0,-n)-\frac{3}{4}\right) \ge \mu $, then $ k \gg_c \mu $.
%When $0<x\ll y$, we have $F\left(x,y\right)\asymp y$, and therefore
%when $F\left(n,k+\max\left(0,-n\right)-\frac{3}{4}\right)$ is close
%to $\mu$ we have $k\asymp\mu$%
Combining this with Lemma \ref{lem:GeomEvs}, we see that
\[
\begin{split}
N_{\text{disk},\sigma}^{1}\left(\mu^{2}\right) & \le\#\left\{ \left(n,k\right):F(n,k+\max (0,-n  )-\frac{3}{4})\le\mu+\frac{C'}{k},\,\left|n\right|<c\cdot k \right\} \\
 & =\#\left\{ \left(n,k\right):F(n,k+\max\left(0,-n\right)-\frac{3}{4})\le\mu,\,\left|n\right|<c\cdot k\right\} \\
 & +\#\left\{ \left(n,k\right):\mu<F(n,k+\max\left(0,-n\right)-\frac{3}{4})\le\mu+\frac{C'}{k},\,\left|n\right|<c\cdot k\right\} \\
 & \le\#\left\{ \left(n,k\right):F(n,k+\max\left(0,-n\right)-\frac{3}{4})\le\mu+\frac{C}{\mu},\,\left|n\right|<c\cdot k\right\} \\
 & =N_{D}^{1}\left(\mu+\frac{C}{\mu}\right).
\end{split}
\]
The proof of the other inequality is similar.
\end{proof}
We will now compare between $N_{D}^{2}\left(\mu\right)$ and $N_{\text{disk},\sigma}^{2}\left(\mu^{2}\right)$.
To this end, for fixed $k\ge1$, we denote
\begin{align*}
N_{k}\left(\mu\right) & =\#\left\{ n:\left(n,k-\frac{3}{4}\right)\in\mu D,\,n\ge c\cdot k\right\} \\
N_{k}'\left(\mu\right) & =\#\left\{ n:\kappa_{n,k}\le\mu,\,\,n\ge c\cdot k\right\} .
\end{align*}

\begin{lem}
\label{lem:MainLemma}Given a sufficiently large $ c>0 $, there exists a constant $C=C_{c,\sigma}>0$
such that
\[
N_{k}\left(\mu\right)-C\frac{\mu^{1/3}}{k^{4/3}}-1\le N_{k}'\left(\mu\right)\le N_{k}\left(\mu\right)+C\frac{\mu^{1/3}}{k^{4/3}}+1.
\]
\end{lem}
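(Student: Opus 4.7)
The plan is to use Lemma~\ref{lem:GeomEvs}(1) to compare the eigenvalue condition $\kappa_{n,k}\le\mu$ with the geometric condition $F(n,k-\tfrac34)\le\mu$ (equivalently $(n,k-\tfrac34)\in\mu D$), and then count integers in the resulting thin boundary slab.

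The first step is an a priori bound: if $n\ge ck$ and $\kappa_{n,k}\le\mu$, then $n\ll_c\mu$. From \eqref{eq:F_as_Z} and the fact that $z\ge 1$ on $(-\infty,0]$, we have $F(n,k-\tfrac34)\ge n$; combined with the remainder $|\kappa_{n,k}-F(n,k-\tfrac34)|\ll n^{1/3}/k^{4/3}\ll_c n^{-1}$ (the last step using $n\ge ck$), this forces $n\le \mu+O_c(1)$ once $c$ is large enough for the error to be absorbed. Consequently the remainder $n^{1/3}/k^{4/3}$ in Lemma~\ref{lem:GeomEvs}(1) is uniformly bounded by $\delta:=C\mu^{1/3}/k^{4/3}$, and we obtain the sandwich
\[
N_k(\mu-\delta)\le N_k'(\mu)\le N_k(\mu+\delta).
\]

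It remains to prove $N_k(\mu+\delta)-N_k(\mu)\ll\delta+1$. The set $\{n\in\R:n\ge ck,\,F(n,k-\tfrac34)\le\mu\}$ is an interval whose right endpoint is $n_{\max}(\mu)=\mu\,g^{-1}((k-\tfrac34)/\mu)$. A short calculation using $g'(x)=-\arccos(x)/\pi$ gives
\[
\frac{\partial n_{\max}}{\partial\mu}=\frac{\sqrt{1-x^2}}{\arccos x},\qquad x=g^{-1}\bigl((k-\tfrac34)/\mu\bigr),
\]
which extends continuously to $1$ at $x=1$ and is therefore uniformly bounded on $[0,1]$. Hence $n_{\max}(\mu+\delta)-n_{\max}(\mu)\ll\delta$, and since the number of integers in a real interval of length $L$ is at most $L+1$, we conclude $N_k(\mu+\delta)-N_k(\mu)\ll\delta+1$, and likewise for the lower bound.

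The main obstacle is the a priori bound in the first step: the remainder $n^{1/3}/k^{4/3}$ from Lemma~\ref{lem:GeomEvs}(1) must be dominated by the main term $F(n,k-\tfrac34)\ge n$ across the entire range $n\ge ck$, which is precisely what forces the ``sufficiently large $c$'' assumption in the hypothesis. Once the a priori bound is secured the slab-counting step is pleasant, thanks to the closed-form derivative being universally bounded on the relevant range.
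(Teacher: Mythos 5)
Your overall architecture is the same as the paper's: use Lemma~\ref{lem:GeomEvs}(1) to sandwich $N_k'(\mu)$ between $N_k(\mu\mp\delta)$ with $\delta\asymp\mu^{1/3}/k^{4/3}$, and then bound the number of integers in the resulting thin slab by its length plus one. Your slab-counting step is a nice variant of the paper's: instead of proving $\partial_x F\left(x,k-\tfrac34\right)\gg1$ for $x\ge c\cdot k$ with $c$ large and applying the mean value theorem to $\tilde F^{-1}$, you solve the boundary relation in the $\mu$-direction, writing the right endpoint as $n_{\max}(\mu)=\mu\, g^{-1}\left((k-\tfrac34)/\mu\right)$ and computing $\partial n_{\max}/\partial\mu=\sqrt{1-x^2}/\arccos x$, which is indeed bounded (in fact $\le1$, since $\sin\theta\le\theta$). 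That computation is correct, and it has the advantage of not requiring $c$ to be large for this step (modulo minor bookkeeping for the left endpoint $c\cdot k$ and for the case where the sublevel set is empty at $\mu$ but not at $\mu+\delta$, which is handled the same way as in the paper).

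The problem is in your a priori bound. From $n\ge c\cdot k$ you get $k\le n/c$, hence $n^{1/3}/k^{4/3}\ge c^{4/3}/n$: the constraint bounds the remainder from \emph{below}, not above, so the asserted chain $n^{1/3}/k^{4/3}\ll_c n^{-1}$ is false (take $k$ bounded and $n$ large, where the remainder is of size $n^{1/3}$), and with it your derivation of $n\le\mu+O_c(1)$. Fortunately, what you actually use downstream is only $n\ll_c\mu$, and that does follow from your (correct) observation $F\left(n,k-\tfrac34\right)\ge n$: using merely $k\ge1$, one has $n\le F\left(n,k-\tfrac34\right)\le\kappa_{n,k}+O\left(n^{1/3}\right)\le\mu+O\left(n^{1/3}\right)$, whence $n\ll\mu$. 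So the slip is repairable on the spot; the paper avoids it altogether by invoking the interlacing inequality \eqref{eq:interlacing}, $n\le j'_{n,k}\le\kappa_{n,k}$, which gives $n\le\mu$ immediately. Note also that, once repaired, your argument does not really need $c$ large anywhere, so your closing remark that the a priori bound is what forces the ``sufficiently large $c$'' hypothesis is misplaced: in the paper that hypothesis is used to get the lower bound $\partial_x F\gg1$, which your $g^{-1}$ parameterization bypasses.
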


\begin{proof}
%Assume that $ n\ge c\cdot k $. We have \[ F\left(n,k-\dfrac{3}{4}\right) = n F\left(\dfrac{k}{n},1-\dfrac{3}{4n}\right), \] and  $ 1\ll F\left(\dfrac{k}{n},1-\dfrac{3}{4n}\right) \ll_c 1 $, and hence  $  n \ll F\left(\dfrac{k}{n},1-\dfrac{3}{4n}\right) \ll_c n.$ Thus, if $ \mu \ge F(n,k-\dfrac{3}{4}) $, then $ n\gg_c \mu .$
%When $y\ll x$, we have $F\left(x,y\right)\asymp x,$ and therefore
%when $F\left(n,k-\frac{3}{4}\right)$ is close to $\mu$ we have $n\asymp\mu$,
Let
\[
A_{k}\left(\mu\right):=\#\left\{ n:\,\mu<F\left(n,k-\frac{3}{4}\right)\le\mu+C'\frac{\mu^{1/3}}{k^{4/3}},\,n\ge c\cdot k\right\},
\]and recall the inequality (see \eqref{eq:interlacing}) $n \le j'_{n,k } \le \kappa_{n,k} $, so in particular if $ \kappa_{n,k} \le \mu $, then $ n\le \mu $.
Thus, Lemma \ref{lem:GeomEvs} gives
\begin{align*}
N_{k}'\left(\mu\right) & \le\#\left\{ n:F (n,k-\frac{3}{4} )\le\mu+C'\frac{n^{1/3}}{k^{4/3}},\,\mu \ge n\ge c\cdot k\right\} \\
 &  \le\#\left\{ n:F (n,k-\frac{3}{4} )\le\mu,\,n\ge c\cdot k\right\} +A_{k}\left(\mu\right)\\
 & =N_{k}\left(\mu\right)+A_{k}\left(\mu\right).
\end{align*}
When $x\ge c \cdot k$, we have $F\left(x,k-\dfrac{3}{4}\right)=xF\left(1,\dfrac{k-3/4}{x}\right)$, and therefore (note that $ F(1,y)\ge 1 $ for all $ y\ge 0 $)
\begin{align*} F_x\left(x,k-\dfrac{3}{4}\right) = F\left(1,\dfrac{k-3/4}{x}\right) -\dfrac{k-3/4}{x}F_y\left(1,\dfrac{k-3/4}{x}\right) \gg 1
\end{align*} when $ c $ is taken sufficiently large.
In particular, $ \tilde{F}(x) := F(x,k-\dfrac{3}{4}) $ is strictly increasing for $ x\ge c \cdot k $, and so  $A_k(\mu)$ is bounded above by the number of integer points in the interval $$I := \left[\tilde{F}^{-1}\left(\max(\mu,\tilde{F}(c\cdot k))\right), \tilde{F}^{-1}\left(\mu+C'\frac{\mu^{1/3}}{k^{4/3}}\right)\right],$$ which in turn is bounded above by $ \mathrm{length}(I) + 1$; by the mean value theorem, keeping in mind that $ (\tilde{F}^{-1})_x = \tilde{F}_x^{-1},$ we conclude that
\[
\mathrm{length}(I) \le C'\frac{\mu^{1/3}}{k^{4/3}} \cdot \max_{x\in I} \dfrac{1}{\tilde{F}_x(x)} \le C \frac{\mu^{1/3}}{k^{4/3}}.
\]
The proof of the other inequality is similar.
\end{proof}
\begin{remark}
The $+1$ factor was missing in \cite{CdV disk}.
\end{remark}

For large values of $k$ we will use the following estimate:
\begin{lem}
\label{lem:IntermediateRange}There exists a constant $C=C_{c,\sigma}>0$
such that for $k>\mu^{4/7}$, we have
\[
N_{k}\left(\mu-\frac{C}{\mu^{3/7}}\right)\le N_{k}'\left(\mu\right)\le N_{k}\left(\mu+\frac{C}{\mu^{3/7}}\right).
\]
\end{lem}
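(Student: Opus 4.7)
The plan is to apply the Bessel-zero asymptotic \eqref{eq:kappa_k<n} of Lemma~\ref{lem:GeomEvs} directly, with the observation that the threshold $k > \mu^{4/7}$ is calibrated precisely to make the error term there uniformly $O(\mu^{-3/7})$. In contrast to Lemma~\ref{lem:MainLemma}, no short-interval lattice-point count (and no attendant $+1$) should be needed: the error simply gets absorbed into a shift of the parameter $\mu$.

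First I would verify that in both $N_k(\mu)$ and $N_k'(\mu)$ the index $n$ lies in the range $[c\cdot k,\mu]$. The upper bound $n\le \mu$ follows for $N_k(\mu)$ from $\mu D \subseteq [-\mu,\mu]\times \R$, and for $N_k'(\mu)$ from the interlacing $n\le j'_{n,k}\le \kappa_{n,k}\le \mu$ already recalled in the proof of Lemma~\ref{lem:MainLemma}. Since $n\ge c\cdot k$, equation \eqref{eq:kappa_k<n} applies and gives $\kappa_{n,k} = F(n,k-3/4) + O_{c,\sigma}(n^{1/3}/k^{4/3})$. Feeding in $n\le \mu$ together with $k>\mu^{4/7}$ yields the uniform bound
\[
\frac{n^{1/3}}{k^{4/3}} \;\le\; \frac{\mu^{1/3}}{(\mu^{4/7})^{4/3}} \;=\; \mu^{1/3-16/21} \;=\; \mu^{-3/7},
\]
so there is a constant $C=C_{c,\sigma}$ with $|\kappa_{n,k} - F(n,k-3/4)| \le C\mu^{-3/7}$ uniformly over the indices under consideration.

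From this, the two desired inclusions follow immediately. If $\kappa_{n,k}\le \mu$ and $n\ge c\cdot k$, then $F(n,k-3/4) \le \mu+C\mu^{-3/7}$, so the pair is counted by $N_k(\mu+C\mu^{-3/7})$; conversely, if $(n,k-3/4)\in (\mu-C\mu^{-3/7})D$ and $n\ge c\cdot k$, then $\kappa_{n,k}\le F(n,k-3/4)+C\mu^{-3/7}\le \mu$, so the pair is counted by $N_k'(\mu)$. Together these give
\[
N_k\!\left(\mu-\frac{C}{\mu^{3/7}}\right)\;\le\; N_k'(\mu)\;\le\; N_k\!\left(\mu+\frac{C}{\mu^{3/7}}\right),
\]
which is the claim. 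I do not foresee any real obstacle; the only subtlety is the exponent arithmetic $\tfrac13-\tfrac{16}{21}=-\tfrac{3}{7}$ which identifies $\mu^{4/7}$ as the correct threshold separating the regime where the clean estimate of this lemma is available from the regime that requires the sharper (and lossier) Lemma~\ref{lem:MainLemma}.
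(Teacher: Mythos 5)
Your argument is correct and is essentially the paper's own proof of this lemma: both apply the uniform asymptotic \eqref{eq:kappa_k<n} of Lemma~\ref{lem:GeomEvs} in the regime $n\ge c\cdot k$, use $n\le\mu$ (via the interlacing $n\le j'_{n,k}\le\kappa_{n,k}$, resp. the definition of $\mu D$) together with $k>\mu^{4/7}$ to bound the error uniformly by $C\mu^{-3/7}$, and absorb it into a shift of $\mu$, with no lattice-point count or $+1$ term needed. The paper states this tersely; you have merely supplied the (correct) exponent arithmetic and range verifications.
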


\begin{proof}
By Lemma \ref{lem:GeomEvs},
\[
N_{k}'\left(\mu\right)\le\#\left\{ n:F\left(n,k-\frac{3}{4}\right)
\le\mu+\frac{C}{\mu^{3/7}},\,n\ge c\cdot k\right\}
=N_{k}\left(\mu+\frac{C}{\mu^{3/7}}\right)  %fixed misprint 11/9/20
\]
and likewise
\[
N_{k}'\left(\mu\right)\ge N_{k}\left(\mu-\frac{C}{\mu^{3/7}}\right).
\]
\end{proof}
\begin{proof}[Proof of Proposition~\ref{rel between weyl and lattice}]
By Lemma \ref{lem:MainLemma} (applied for $k\le\mu^{4/7}$) and
Lemma \ref{lem:IntermediateRange} (applied for $k>\mu^{4/7}$), we
get that
\begin{multline*}
N_{\text{disk},\sigma}^{2}\left(\mu^{2}\right)=\sum_{k\ge1}N_{k}'\left(\mu\right)
\\
\le\sum_{k\ge1}N_{k}\left(\mu+\frac{C}{\mu^{3/7}}\right)+C\mu^{4/7}
=N_{D}^{2}\left(\mu+\frac{C}{\mu^{3/7}}\right)+C\mu^{4/7}
\end{multline*}
and likewise
\[
N_{\text{disk},\sigma}^{2}\left(\mu^{2}\right)\ge N_{D}^{2}\left(\mu-\frac{C}{\mu^{3/7}}\right)-C\mu^{4/7}.
\]
This, together with Lemma \ref{lem:LargeRange} gives the claim.
\end{proof}

\appendix

\section{\label{sec:BesselAppendix}Proof of Lemma \ref{lem:GeomEvs}}

The goal of this appendix is to prove the asymptotic formulas (\ref{eq:kappa_k<n})
and (\ref{eq:kappa_k>n}) for the zeros $\kappa_{n,k}$ of $xJ_{n}'\left(x\right)+\sigma J_{n}\left(x\right)$
where $\sigma\ge0.$ More generally, we will work with Bessel functions
$J_{\nu}\left(x\right)$ of real order $\nu$. Many properties of
the zeros $\kappa_{\nu,k}$ are well-known, e.g. for all $\sigma>0$,
$\nu \ge 0$ and $k\ge1$ we have (see e.g. \cite[Eq. (III.6)]{Spigler})
\begin{equation}
\nu \le j'_{\nu,k}<\kappa_{\nu,k}<j_{\nu,k}\label{eq:interlacing}
\end{equation}
where $ j_{\nu,k} $ (resp. $ j'_{\nu,k} $) is the $ k $-th positive zero of $ J_{\nu}\left(x\right) $ (resp. $ J'_{\nu}\left(x\right) $, with the convention that $ x=0  $ is counted as the first zero of $ J'_0(x) $);
for $\sigma\ge0$ and fixed $\nu \ge 0$ we have the asymptotic formula
\cite[Eq. (IV.9)]{Spigler}
\[
\kappa_{\nu,k}=j'_{\nu,k}+\frac{\sigma}{j'_{\nu,k}}+\frac{-\frac{1}{3}\sigma^{3}-\frac{1}{2}\sigma^{2}+\nu^{2}\sigma}{\left(j'_{\nu,k}\right)^{3}}+O_{\sigma}\left(\left(j'_{\nu,k}\right)^{-5}\right)\hspace{1em}\left(k\to\infty\right).
\]
Recall the function $\zeta\left(z\right)$ defined above by (\ref{eq:zeta_diff_eq})
which satisfies (\ref{eq:zeta}) for $z\ge1$, with an inverse $z\left(\zeta\right)$.
Denote $h\left(\zeta\right)=\left(\frac{4\zeta}{1-z^{2}}\right)^{1/4}$.
We have the following asymptotic expansion for $J_{\nu}\left(\nu z\right)$
as $\nu\to\infty$ \cite[Eq. 10.20.4]{DLMF}
\begin{equation}
J_{\nu}\left(\nu z\right)\sim h\left(\zeta\right)\left[\frac{\text{Ai}\left(\nu^{2/3}\zeta\right)}{\nu^{1/3}}\sum_{j=0}^{\infty}\frac{A_{j}\left(\zeta\right)}{\nu^{2j}}+\frac{\text{Ai}'\left(\nu^{2/3}\zeta\right)}{\nu^{5/3}}\sum_{j=0}^{\infty}\frac{B_{j}\left(\zeta\right)}{\nu^{2j}}\right]\label{eq:J_n_asymp}
\end{equation}
which holds uniformly for $ z>0, $ where $\text{Ai\ensuremath{\left(z\right)}}$ is the Airy function,
and the coefficients $A_{j}\left(\zeta\right)$ and $B_{j}\left(\zeta\right)$
are given by \cite[Eq. 10.2.10, 10.20.11]{DLMF} and the remark following
these equations. Likewise, we have the asymptotic expansion \cite[Eq. 10.20.7]{DLMF}
\begin{equation}
J_{\nu}'\left(\nu z\right)\sim-\frac{2}{zh\left(\zeta\right)}\left[\frac{\text{Ai}\left(\nu^{2/3}\zeta\right)}{\nu^{4/3}}\sum_{j=0}^{\infty}\frac{C_{j}\left(\zeta\right)}{\nu^{2j}}+\frac{\text{Ai}'\left(\nu^{2/3}\zeta\right)}{\nu^{2/3}}\sum_{j=0}^{\infty}\frac{D_{j}\left(\zeta\right)}{\nu^{2j}}\right]\label{eq:J_n'_asymp}
\end{equation}
uniformly for $ z>0, $ where the coefficients $C_{j}\left(\zeta\right)$ and $D_{j}\left(\zeta\right)$
are given by \cite[Eq. 10.2.12, 10.20.13]{DLMF} and the remark which
follows them. Each of the coefficients $A_{j}\left(\zeta\right),B_{j}\left(\zeta\right),C_{j}\left(\zeta\right),D_{j}\left(\zeta\right)$,
$j=0,1,2,\dots$ is bounded near $\zeta=0$; we have $A_{0}\left(\zeta\right)=D_{0}\left(\zeta\right)=1$.

For the Robin parameter $\sigma\ge0$, if we denote $B_{-1}\left(\zeta\right)=0$
and let
\begin{align*}
	\alpha_{j}^{\sigma}\left(\zeta\right) & :=C_{j}\left(\zeta\right)-\frac{\sigma A_{j}\left(\zeta\right)h^{2}\left(\zeta\right)}{2}\\
	\beta_{j}^{\sigma}\left(\zeta\right) & :=D_{j}\left(\zeta\right)-\frac{\sigma B_{j-1}\left(\zeta\right)h^{2}\left(\zeta\right)}{2},
\end{align*}
then (\ref{eq:J_n_asymp}) and (\ref{eq:J_n'_asymp}) give
\begin{align*}
\phi_{\nu}\left(\nu z\right)&:=J_{\nu}'\left(\nu z\right)+\frac{\sigma}{\nu z}J_{\nu}\left(\nu z\right)\\
&\sim\frac{-2}{zh\left(\zeta\right)}\left[\frac{\text{Ai}'\left(\nu^{2/3}\zeta\right)}{\nu^{2/3}}\sum_{j=0}^{\infty}\frac{\beta_{j}^{\sigma}\left(z\right)}{\nu^{2j}}+\frac{\text{Ai}\left(\nu^{2/3}\zeta\right)}{\nu^{4/3}}\sum_{j=0}^{\infty}\frac{\alpha_{j}^{\sigma}\left(z\right)}{\nu^{2j}}\right]
\end{align*}
uniformly for $ z>0. $ Note that $\alpha_{0}^{\sigma}\left(\zeta\right)=C_{0}\left(\zeta\right)-\frac{\sigma h^{2}\left(\zeta\right)}{2}$.
Using the derivation of \cite[p. 345]{Olver} with $\alpha_{j}^{\sigma}$,
$\beta_{j}^{\sigma}$ instead of $C_{j}\left(\zeta\right)$, $D_{j}\left(\zeta\right)$
(the latter were used to establish the asymptotic expansion of the
zeros of $J_{\nu}'\left(z\right)$ corresponding to $\sigma=0$),
we get the following \emph{uniform} asymptotic formula for $\kappa_{\nu,k}$
as $\nu\to\infty$ :
\begin{lem}
	Fix $\sigma\ge0$, let $a_{k}'$ be the $k$-th zero of $\mathrm{Ai}'\left(z\right)$
	(all of these zeros are real and negative), and let $\zeta=\zeta_{\nu,k}=\nu^{-2/3}a_{k}'$.
	Then in the above notation, uniformly for $ k\ge1 $, we have
	\begin{equation}
	\kappa_{\nu,k}=\nu z\left(\zeta\right)-\frac{z'\left(\zeta\right)\left(C_{0}\left(\zeta\right)-\frac{\sigma h^{2}\left(\zeta\right)}{2}\right)}{\zeta\nu}+O_{\sigma}\left(\frac{1}{\nu}\right).\label{eq:k_nm}
	\end{equation}
\end{lem}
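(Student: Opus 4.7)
The strategy is the standard Olver perturbation technique for the zeros of a Bessel--type combination, applied to
\[
\phi_\nu(\nu z) \;:=\; J'_\nu(\nu z) + \frac{\sigma}{\nu z}J_\nu(\nu z),
\]
whose positive zeros are precisely $\kappa_{\nu,k}/\nu$. First I would substitute the uniform expansions \eqref{eq:J_n_asymp} and \eqref{eq:J_n'_asymp} into $\phi_\nu(\nu z)$, factor out the common prefactor $-2/(zh(\zeta))$, and absorb the extra factor $\sigma/(\nu z)$ into the two series using the bookkeeping relations $\alpha_j^\sigma=C_j-\tfrac{\sigma}{2}A_j h^2$ and $\beta_j^\sigma=D_j-\tfrac{\sigma}{2}B_{j-1}h^2$. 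This yields precisely the displayed uniform asymptotic for $\phi_\nu(\nu z)$, valid uniformly in $z>0$, with leading coefficients $\beta_0^\sigma=D_0=1$ and $\alpha_0^\sigma=C_0-\tfrac{\sigma}{2}h^2$.

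Next, setting $\phi_\nu(\nu z)=0$, the term with $\mathrm{Ai}'(\nu^{2/3}\zeta)/\nu^{2/3}$ dominates the term with $\mathrm{Ai}(\nu^{2/3}\zeta)/\nu^{4/3}$ by a factor of $\nu^{2/3}$, so to leading order the equation forces $\mathrm{Ai}'(\nu^{2/3}\zeta)=0$, i.e.\ $\nu^{2/3}\zeta=a_k'$. I therefore take $\zeta=\nu^{-2/3}a_k'$ as the zeroth--order ansatz and write the true root as $\zeta+\delta$. Using the Airy equation $\mathrm{Ai}''(z)=z\mathrm{Ai}(z)$ I Taylor expand
\[
\mathrm{Ai}'(a_k'+\nu^{2/3}\delta) \;=\; a_k'\,\mathrm{Ai}(a_k')\,\nu^{2/3}\delta \;+\; O\!\left(\nu^{4/3}\delta^2\right).
\]
Since $\mathrm{Ai}(a_k')\neq 0$ (the zero sets of $\mathrm{Ai}$ and $\mathrm{Ai}'$ are disjoint), dividing by $\mathrm{Ai}(a_k')$ and retaining only the leading terms of the two series gives the balance
\[
a_k'\,\nu^{2/3}\delta \;+\; \nu^{-2/3}\alpha_0^\sigma(\zeta) \;=\; O(\nu^{-8/3}),
\]
so that
\[
\delta \;=\; -\frac{\alpha_0^\sigma(\zeta)}{a_k'\,\nu^{4/3}} \;+\; O(\nu^{-3}) \;=\; -\frac{\alpha_0^\sigma(\zeta)}{\zeta\,\nu^{2}} \;+\; O(\nu^{-3}),
\]
where in the last step I used $a_k'=\nu^{2/3}\zeta$.

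Finally, I convert back to the Bessel variable via $\kappa_{\nu,k}=\nu z(\zeta+\delta)$. Taylor expansion gives
\[
\kappa_{\nu,k} \;=\; \nu z(\zeta) + \nu z'(\zeta)\delta + O(\nu\delta^2) \;=\; \nu z(\zeta) - \frac{z'(\zeta)\,\alpha_0^\sigma(\zeta)}{\zeta\,\nu} + O_\sigma\!\left(\tfrac{1}{\nu}\right),
\]
which is exactly the claimed formula after substituting $\alpha_0^\sigma(\zeta)=C_0(\zeta)-\tfrac{\sigma}{2}h^2(\zeta)$. The hard part of this plan is not the algebra but uniformity in $k\ge 1$: one must check that $\mathrm{Ai}(a_k')$ is bounded below (it is, with $|\mathrm{Ai}(a_k')|$ decaying only polynomially in $k$), that the coefficient functions $\alpha_j^\sigma,\beta_j^\sigma,z'/\zeta$ and the Taylor remainder of $z$ are controlled uniformly in the relevant range of $\zeta$, and that the omitted higher--order terms in both series truly contribute to the stated $O_\sigma(1/\nu)$ error uniformly in $k$. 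Once these uniformity checks go through, the formula \eqref{eq:k_nm} follows as sketched, and then the two claims of Lemma~\ref{lem:GeomEvs} are obtained by inserting the definition of $F$ through \eqref{eq:F_as_Z} and estimating the correction $z'(\zeta)\alpha_0^\sigma(\zeta)/(\zeta\nu)$ in the two regimes $k\le n/c$ and $|n|\le c k$.
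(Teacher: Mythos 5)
Your proposal is correct and takes essentially the same route as the paper: the paper also substitutes the uniform expansions \eqref{eq:J_n_asymp} and \eqref{eq:J_n'_asymp} into $\phi_{\nu}(\nu z)=J_{\nu}'(\nu z)+\frac{\sigma}{\nu z}J_{\nu}(\nu z)$, introduces $\alpha_{j}^{\sigma},\beta_{j}^{\sigma}$, and then appeals to Olver's derivation (p.~345) with these coefficients in place of $C_{j},D_{j}$, which is precisely the perturbation about $\zeta=\nu^{-2/3}a_{k}'$ that you carry out explicitly. The uniformity-in-$k$ checks you flag at the end (lower bounds on $|\mathrm{Ai}(a_{k}')|$, control of the coefficient functions and of the higher-order terms) are exactly what the cited derivation of Olver supplies, so your sketch and the paper's proof coincide in substance.
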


In particular, for $\sigma=0$ we reconstructed the formula \cite[Eq. 10.21.43]{DLMF}
\[
j'_{\nu,k}=\nu z\left(\zeta\right)-\frac{z'\left(\zeta\right)C_{0}\left(\zeta\right)}{\zeta\nu}+O\left(\frac{1}{\nu}\right)
\]
uniformly for $ k\ge1 $ (note the identity $z'\left(\zeta\right)=-\frac{z\left(\zeta\right)h^{2}\left(\zeta\right)}{2}$).
We remark that the secondary term in (\ref{eq:k_nm}) is necessary
because of the $\zeta$ factor in the denominator which may be as
small as $\nu^{-2/3}$ when $k$ is small. This phenomenon does not
occur for the zeros of $J_{\nu}\left(z\right)$ (see \cite[Sec. 7]{Olver}),
which satisfy the more compact uniform expansion
\[
j_{\nu,k}=\nu z\left(\zeta\right)+O\left(\frac{1}{\nu}\right)
\]
where $\zeta=\nu^{-2/3}a_{k}$ and $a_{k}$ is the $k$-th zero of
$\text{Ai}\left(z\right)$ \cite[Eq. 10.21.41]{DLMF}.

Recall that the zeros $a_{k}'$ of $\text{Ai}'\left(z\right)$ satisfy
the asymptotic formula \cite[Eq. 9.9.8]{DLMF}
\begin{equation}
a_{k}'=-\left[\frac{3\pi}{2}\left(k-\frac{3}{4}\right)\right]^{2/3}+O\left(k^{-4/3}\right).\label{eq:AiryZeros}
\end{equation}

\begin{proof}[Proof of Lemma \ref{lem:GeomEvs}, first part]
	Assume that $k\le\nu/c$, where $\nu\to\infty$. The functions $z'\left(\zeta\right)$
	and $h\left(\zeta\right)$ are bounded near $\zeta=0$, and therefore
	inserting (\ref{eq:AiryZeros}) into (\ref{eq:k_nm}) gives
	\[
	\nu z\left(\zeta\right)=\nu z\left(-\nu^{-2/3}\left[\frac{3\pi}{2}\left(k-\frac{3}{4}\right)\right]^{2/3}\right)+O_{c,\sigma}\left(\frac{\nu^{1/3}}{k^{4/3}}\right)
	\]
	and
	\[
	\frac{z'\left(\zeta\right)\left(C_{0}\left(\zeta\right)-\frac{\sigma h^{2}\left(\zeta\right)}{2}\right)}{\zeta\nu}\ll_{c,\sigma}\frac{1}{\nu^{1/3}k^{2/3}}\ll_c\frac{\nu^{1/3}}{k^{4/3}}.
	\]
	Also note that $\frac{1}{\nu}\ll_c\frac{\nu^{1/3}}{k^{4/3}}$ when $k\le\nu$/c.
	By (\ref{eq:F_as_Z}) we have
	\[
	z\left(-\nu^{-2/3}\left[\frac{3\pi}{2}\left(k-\frac{3}{4}\right)\right]^{2/3}\right)=F\left(\nu,k-\frac{3}{4}\right),
	\]
	and therefore the above estimates yield
	\begin{equation}
	\kappa_{\nu,k}=F\left(\nu,k-\frac{3}{4}\right)+O_{c,\sigma}\left(\frac{\nu^{1/3}}{k^{4/3}}\right)\label{eq:kappa_nu}
	\end{equation}
	which gives (\ref{eq:kappa_k<n}).
\end{proof}
In order to prove the second part of Lemma \ref{lem:GeomEvs}, we
require the following lemma. Recall the function $g\left(x\right)$
defined in (\ref{eq:g_def}).
\begin{lem}
	\label{lem:phi_asymp_lemma}Fix $\sigma\ge0$, and let $C>0$ be a
	constant. Let $\phi_{\nu}\left(x\right):=J_{\nu}'\left(x\right)+\frac{\sigma}{x}J_{\nu}\left(x\right)$.
	As $x\to\infty$, uniformly for $0\le\nu\le x/\left(1+C\right)$,
	we have
	\begin{equation}
	\phi_{\nu}\left(x\right)=-\left(\frac{2}{\pi}\right)^{1/2}\left(x^{2}-\nu^{2}\right)^{1/4}x^{-1}\left(\sin\left(\pi xg\left(\nu/x\right)-\frac{\pi}{4}\right)+O_{C,\sigma}\left(x^{-1}\right)\right).\label{eq:phi_asymp}
	\end{equation}
\end{lem}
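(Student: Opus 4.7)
The plan is to derive (\ref{eq:phi_asymp}) from large-argument asymptotics of $J_\nu$ and $J_\nu'$, splitting the range $0 \le \nu \le x/(1+C)$ into two overlapping regimes. The bridge between the two regimes and the target formula is the purely algebraic identity
\[
\pi x\, g(\nu/x) = \sqrt{x^2-\nu^2} - \nu\arccos(\nu/x),
\]
which is immediate from the definition (\ref{eq:g_def}) of $g$.

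\emph{Bounded $\nu$ regime.} When $\nu$ lies in a fixed compact set, I would appeal to the classical Hankel expansion
\[
J_\nu(x)=\sqrt{\tfrac{2}{\pi x}}\bigl(\cos(x-\tfrac{\nu\pi}{2}-\tfrac{\pi}{4})+O_{\nu}(x^{-1})\bigr),\quad J_\nu'(x)=-\sqrt{\tfrac{2}{\pi x}}\bigl(\sin(x-\tfrac{\nu\pi}{2}-\tfrac{\pi}{4})+O_{\nu}(x^{-1})\bigr),
\]
giving $\phi_\nu(x)=-\sqrt{2/(\pi x)}\sin(x-\nu\pi/2-\pi/4)+O_{\nu,\sigma}(x^{-3/2})$. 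Taylor-expanding $\arccos(\nu/x)=\pi/2-\nu/x+O(x^{-3})$ and $\sqrt{x^2-\nu^2}=x-\nu^2/(2x)+O(x^{-3})$ gives $\pi x\,g(\nu/x)=x-\nu\pi/2+O_\nu(x^{-1})$ and $(x^2-\nu^2)^{1/4}/x=x^{-1/2}(1+O_\nu(x^{-2}))$, whence (\ref{eq:phi_asymp}) follows in this regime.

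\emph{Large $\nu$ regime.} For $\nu\to\infty$ with $\nu\le x/(1+C)$, I would write $x=\nu z$ (so $z\ge 1+C$) and take $\zeta=\zeta(z)$ as in (\ref{eq:zeta}); the hypothesis forces $-\zeta\ge \zeta_0(C)>0$. Combining (\ref{eq:J_n_asymp}) and (\ref{eq:J_n'_asymp}) exactly as in the derivation of (\ref{eq:k_nm}), the dominant contribution to $\phi_\nu(\nu z)$ is
\[
\frac{-2}{z\,h(\zeta)}\cdot\frac{\mathrm{Ai}'(\nu^{2/3}\zeta)}{\nu^{2/3}}\cdot\beta_0^\sigma(\zeta),
\]
where $\beta_0^\sigma(\zeta)=D_0(\zeta)=1$ since $B_{-1}\equiv 0$, the $\mathrm{Ai}$-term being smaller by a factor $(\nu(-\zeta)^{3/2})^{-1}$ and the $j\ge 1$ tails being $O_\sigma(\nu^{-2})$ uniformly for $z\ge 1+C$. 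I then insert the large-argument Airy asymptotic $\mathrm{Ai}'(-w)=\pi^{-1/2}w^{1/4}\sin(\tfrac{2}{3}w^{3/2}-\tfrac{\pi}{4})(1+O(w^{-3/2}))$ with $w=\nu^{2/3}(-\zeta)$, and use (\ref{eq:zeta}) to rewrite the phase as $\frac{2}{3}\nu(-\zeta)^{3/2}=\nu(\sqrt{z^2-1}-\arccos(1/z))=\pi x\,g(\nu/x)$. A direct simplification of the amplitude, using $h(\zeta)=(4(-\zeta)/(z^2-1))^{1/4}$ and the identity $(z^2-1)^{1/4}/(z\nu^{1/2})=(x^2-\nu^2)^{1/4}/x$, produces exactly the prefactor $-\sqrt{2/\pi}\,(x^2-\nu^2)^{1/4}/x$ of (\ref{eq:phi_asymp}).

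\emph{Uniform error control, and the main obstacle.} The relative remainder from the Airy asymptotic is $O(w^{-3/2})=O(\nu^{-1}(-\zeta)^{-3/2})$, which I expect to equal $O(x^{-1})$ uniformly in $z\ge 1+C$: when $z$ stays bounded, $(-\zeta)\asymp 1$ and $\nu\asymp x$; when $z\to\infty$, $(-\zeta)^{3/2}\asymp z$ by (\ref{eq:zeta}), so $\nu(-\zeta)^{3/2}\asymp \nu z=x$. Together with the $\mathrm{Ai}$ contribution and the $j\ge 1$ tails, this absorbs into a single $O_{C,\sigma}(x^{-1})$ relative error and gives (\ref{eq:phi_asymp}) for large $\nu$. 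The principal technical difficulty is precisely this uniform bookkeeping: tracking the dependence of the remainders simultaneously on $\nu$ and on $z$ (in particular handling $z\to\infty$, where the coefficient functions $\alpha_j^\sigma,\beta_j^\sigma$ and $h$ must be shown to remain controlled), and matching the bounded-$\nu$ and large-$\nu$ estimates on an overlap so that the final $O(x^{-1})$ holds uniformly over the full range $0\le \nu\le x/(1+C)$.
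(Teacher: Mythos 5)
Your route is genuinely different from the paper's. The paper proves \eqref{eq:phi_asymp} in one stroke from the integral representation \eqref{eq:BesselIntegral}: writing $r=\nu/x\le 1/(1+C)$, a stationary-phase evaluation of $\frac1\pi\int_0^\pi e^{ix(\sin t-rt)}dt$ and a Laplace-method evaluation of the second integral give \eqref{eq:j_nu_asymp} and \eqref{eq:j'_nu_asymp} uniformly in $r$ bounded away from $1$, and adding $\frac{\sigma}{x}J_\nu$ costs only a relative $O_{C,\sigma}(x^{-1})$ because $x^2-\nu^2\gg_C x^2$. That argument needs no case split and no uniform Airy machinery, since the uniformity in $\nu$ is automatic from the uniformity of the phase analysis in $r$. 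Your plan -- Hankel expansion for $\nu$ up to a fixed cutoff, Olver's expansions \eqref{eq:J_n_asymp}, \eqref{eq:J_n'_asymp} plus the oscillatory Airy asymptotics beyond it, with the phase identity $\tfrac23\nu(-\zeta)^{3/2}=\pi x g(\nu/x)$ -- can be made to work, and your amplitude computation (using $h(\zeta)=(4\zeta/(1-z^2))^{1/4}$) correctly reproduces the prefactor; but it is heavier precisely where you flag the difficulty, and two of your stated bounds are not adequate as written. First, the claim that the $j\ge1$ tails are $O_\sigma(\nu^{-2})$ does not yield a relative error $O(x^{-1})$ throughout the range: at the cutoff $\nu\asymp 1$ one has $z=x/\nu\to\infty$, and $\nu^{-2}\gg x^{-1}$ whenever $x\gg\nu^2$. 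What saves you is the decay of the coefficient functions as $\zeta\to-\infty$ (from the structure in DLMF 10.20.10--10.20.13 one gets, e.g., $C_0(\zeta)=O((-\zeta)^{-1})$ and $h^2(\zeta)=O((-\zeta)^{-1})$, and correspondingly faster decay for higher $j$), which upgrades the Ai-term to a relative $O\bigl((\nu(-\zeta)^{3/2})^{-1}\bigr)=O(x^{-1})$, as you assert, and the tails to $O(x^{-2})$; this coefficient bookkeeping is the real content of your large-$\nu$ regime and must actually be carried out, since it is exactly what the blanket $O_\sigma(\nu^{-2})$ misses. Second, the Airy input should be used in additive form, $\mathrm{Ai}'(-w)=\pi^{-1/2}w^{1/4}\bigl(\sin(\tfrac23 w^{3/2}-\tfrac\pi4)+O(w^{-3/2})\bigr)$; the multiplicative form $(1+O(w^{-3/2}))$ you quote is false near the zeros of the sine (though it would still imply the additive bound you need). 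With these repairs, and with the trivial remark that no genuine ``matching'' is needed (a single cutoff $\nu_0=\nu_0(C,\sigma)$ partitioning $[0,x/(1+C)]$ suffices), your proof goes through, but the paper's stationary-phase argument obtains the same uniformity far more directly.
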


\begin{proof}
	We use the standard integral representation  \cite[Eq. 10.9.6]{DLMF}
	\begin{equation}
	J_{\nu}\left(x\right)=\frac{1}{\pi}\int_{0}^{\pi}\cos\left(x\sin t-\nu t\right)\,dt-\frac{\sin\left(\nu\pi\right)}{\pi}\int_{0}^{\infty}e^{-x\sinh t-\nu t}\,dt\label{eq:BesselIntegral}
	\end{equation}
	(for integer $\nu$ the second integral in (\ref{eq:BesselIntegral})
	vanishes). Assume that $x\ge\left(1+C\right)\nu\ge0$, and denote
	$r=\nu/x\le\frac{1}{1+C}<1$. The first integral in (\ref{eq:BesselIntegral})
	is equal to the real part of
	\[
	\mathcal{I}_{r}^{1}\left(x\right):=\frac{1}{\pi}\int_{0}^{\pi}e^{ix\left(\sin t-rt\right)}\,dt.
	\]
	By the method of stationary phase \cite[Eq. 2.3.23]{DLMF}, we have
	the asymptotics:
	\begin{align*}
	\mathcal{I}_{r}^{1}\left(x\right)&=e^{\pi i\left(xg\left(r\right)-\frac{1}{4}\right)}\left(\frac{2}{\pi\sqrt{1-r^{2}}x}\right)^{1/2}\\
	&+e^{-ir\pi x}\frac{i}{\pi\left(1+r\right)x}+\frac{i}{\pi\left(1-r\right)x}+O_{C}\left(x^{-3/2}\right).
	\end{align*}
	Hence
	\begin{align*}
	\text{Re}\left(\mathcal{I}_{r}^{1}\left(x\right)\right)&=\cos\left(\pi xg\left(r\right)-\frac{\pi}{4}\right)\left(\frac{2}{\pi\sqrt{1-r^{2}}x}\right)^{1/2}\\
	&+\frac{\sin\left(\pi rx\right)}{\pi\left(1+r\right)x}+O_{C}\left(x^{-3/2}\right).
	\end{align*}
	The second integral in (\ref{eq:BesselIntegral}) is equal to
	\[
	\mathcal{I}_{r}^{2}\left(x\right):=\frac{\sin\left(\pi rx\right)}{\pi}\int_{0}^{\infty}e^{-x\left(\sinh t+rt\right)}\,dt
	\]
	and can be evaluated by the Laplace method \cite[Eq. 2.3.15]{DLMF}:
	\[
	\mathcal{I}_{r}^{2}\left(x\right)=\frac{\sin\left(\pi rx\right)}{\pi\left(1+r\right)x}+O_{C}\left(x^{-2}\right).
	\]
	We obtain
	\begin{equation}
	J_{\nu}\left(x\right)=\left(\frac{2}{\pi}\right)^{1/2}\left(x^{2}-\nu^{2}\right)^{-1/4}\left(\cos\left(\pi xg\left(\nu/x\right)-\frac{\pi}{4}\right)+O_{C}\left(x^{-1}\right)\right);\label{eq:j_nu_asymp}
	\end{equation}
	a similar procedure gives
	\begin{equation}
	J_{\nu}'\left(x\right)=-\left(\frac{2}{\pi}\right)^{1/2}\left(x^{2}-\nu^{2}\right)^{1/4}x^{-1}\left(\sin\left(\pi xg\left(\nu/x\right)-\frac{\pi}{4}\right)+O_{C}\left(x^{-1}\right)\right).\label{eq:j'_nu_asymp}
	\end{equation}
	The formula (\ref{eq:phi_asymp}) now follows upon combining (\ref{eq:j_nu_asymp})
	and (\ref{eq:j'_nu_asymp}).
\end{proof}
\begin{proof}[Proof of Lemma \ref{lem:GeomEvs}, second part]
	Let $0\le\nu\le c\cdot k,$ where $k\to\infty$. Clearly, the condition
	$0\le\nu\le c\cdot k$ implies that $\kappa_{\nu,k}\ge\left(1+C\right)\nu\ge0$
	for some constant $C=C\left(c\right)>0$ and that $ \kappa_{\nu,k} \asymp_c k $ (e.g. by the analogous well-known inequalities
	for the Bessel zeros $j_{\nu,k}$, see (5.3) in \cite{CdVGJ}, together with (\ref{eq:interlacing})).
	By Lemma \ref{lem:phi_asymp_lemma}, we have
	\[
	\sin\left(\pi \kappa_{\nu,k} g\left(\nu/\kappa_{\nu,k}\right)-\frac{\pi}{4}\right)=O_{c, \sigma}\left(k^{-1}\right)
	\]
	so there exists an integer $m$ such that
	\begin{equation}
    \kappa_{\nu,k}g\left(\nu/\kappa_{\nu,k}\right)=m-\frac{3}{4}+O_{c,\sigma}\left(k^{-1}\right).\label{eq:xg(nu/x)}
	\end{equation}
    This in particular gives $ m \asymp_{c,\sigma} \kappa_{\nu,k}$. We have 1 $ \ll F_{y} \ll_{c,\sigma} 1 $ when $ y\gg_{c,\sigma} x $, as can be seen by differentiating \eqref{eq:F_as_Z} and combining with \eqref{eq:zeta_diff_eq}. This, together with the equality $ \kappa_{\nu,k} = F(\nu, \kappa_{\nu,k}g(\nu / \kappa_{\nu,k})) $ and (\ref{eq:xg(nu/x)}) gives
	\begin{equation*}
	\kappa_{\nu,k}=F\left(\nu,m-\frac{3}{4}+O_{c,\sigma}\left(k^{-1}\right)\right)=F\left(\nu,m-\frac{3}{4}\right)+O_{c,\sigma}\left(k^{-1}\right).
	\end{equation*}
	We will now show that $ m=k $: indeed, fix $ k, \nu $ and the corresponding $ m $, and assume that $ \nu' $ is close to $ \nu $, so there exists an integer $ m' $ such that \[\kappa_{\nu',k}=F\left(\nu',m'-\frac{3}{4}\right)+O_{c,\sigma}\left(k^{-1}\right). \] By the mean value theorem
	\begin{align}\nonumber
		|m-m'| & \ll \left\vert F\left(\nu',m-\frac{3}{4}\right) - F\left(\nu',m'-\frac{3}{4}\right)\right\vert \\ \label{eq:cont_argument}
		& \le \left\vert F\left(\nu',m-\frac{3}{4}\right) - F\left(\nu,m-\frac{3}{4}\right)\right\vert  + |\kappa_{\nu',k}-\kappa_{\nu,k}| \\ &+O_{c,\sigma}\left(k^{-1}\right)  \nonumber.
	\end{align}
Note that $\kappa_{\nu',k}$ is a continuous function of $\nu'\ge0$
	(in fact, it is differentiable in $\nu'$ in this regime, see e.g.
	\cite{LandauLJ}), and so is $ F\left(\nu',m-\frac{3}{4}\right) $ as a function of $ \nu' $. Therefore the right-hand-side of \eqref{eq:cont_argument} can be made arbitrarily small when $ k $ is sufficiently large and $ \nu'$ is sufficiently close to $ \nu $, and hence $ m'=m $. We see that map $ \nu \mapsto \kappa_{\nu,k} \mapsto m $ is well-defined and it is locally constant and hence constant for $ 0 \le \nu \le c \cdot k $.  But we know by (\ref{eq:kappa_nu}) that for $\nu\asymp k$ we have $ m=k $; hence $ m=k $ for any $ 0 \le \nu \le c \cdot k $. This gives (\ref{eq:kappa_k>n}) when $n\ge0;$
	for $n<0$, (\ref{eq:kappa_k>n}) follows from the relations $\kappa_{-n,k}=\kappa_{n,k}$
	and (\ref{eq:F_symmetry}).
\end{proof}

\end{document}